\newtheorem{thm}{Theorem}[section]
\newtheorem{prop}[thm]{Proposition}
\newtheorem{lem}[thm]{Lemma}
\theoremstyle{definition}
\numberwithin{equation}{section}
\newtheorem{rem}[thm]{Remark}
\newtheorem{ex}[thm]{Example}
\newtheorem{defn}[thm]{Definition}
\newtheorem{ass}[thm]{Assumption}
\begin{document}
\bibliographystyle{amsalpha}

\title[Hamiltonian and Lagrangian formalisms of mutations]
{
Hamiltonian and Lagrangian formalisms\\ of mutations in cluster algebras\\
and application to dilogarithm identities
}

\author{Michael Gekhtman}
\address{\noindent 
Department of Mathematics, University of Notre Dame, Notre Dame, IN 46556,
USA}
\email{mgekhtma@nd.edu}
\author{Tomoki Nakanishi}
\address{\noindent Graduate School of Mathematics, Nagoya University, 
Chikusa-ku, Nagoya,
464-8604, Japan}
\email{nakanisi@math.nagoya-u.ac.jp}
\author{Dylan Rupel}
\address{\noindent 
Department of Mathematics, University of Notre Dame, Notre Dame, IN 46556,
USA}
\email{drupel@nd.edu}

%\subjclass[2010]{Primary 13F60, Secondary 33E20}
%\keywords{dilogarithm, quantum dilogarithm, cluster algebra}
%\thanks{This work was partially supported by JSPS KAKENHI Grant Number 16H03922.}

\date{}
\begin{abstract}
We introduce and study a Hamiltonian formalism of mutations in cluster algebras
using canonical variables,
where the Hamiltonian is given by the Euler dilogarithm.
The corresponding Lagrangian, restricted to a certain subspace of the phase space,
coincides with the Rogers dilogarithm.
As an application,
we show how the dilogarithm identity  associated with a period of 
mutations in a cluster algebra
arises from the Hamiltonian/Lagrangian point of view.
%{\bf (ver.161105: not for destribution)}
\end{abstract}

\maketitle

\section{Introduction}

\subsection{Background and motivation}
\label{subsec:back1}

The {\em Euler dilogarithm}
\begin{align}
\mathrm{Li}_2(x)&=\sum_{n=1}^{\infty} \frac{x^n}{n^2}
=-
\int_0^{x}
\frac{\log (1-y)}{y}
dy
\end{align}
appears in several areas of mathematics
\cite{Zagier07}.
One of the main features of this function is
that it satisfies a wide variety of functional relations ({\em dilogarithm identities\/}),
many of which look miraculous and mysterious.
{\em Cluster algebras} are a class of commutative algebras
introduced by Fomin and Zelevinsky \cite{Fomin02}.
They originated in Lie theory
but turned out to be related to several areas of mathematics.

Fock and Goncharov
\cite{Fock07}  recognized that
the function $\mathrm{Li}_2(x)$ is  {\em built into} cluster algebra theory
{\em as a Hamiltonian}.
To see this, consider the Poisson bracket
introduced by \cite{Gekhtman02},
\begin{align}
\label{eq:bra1}
\{y_i,y_i\}=b_{ij}y_iy_j,
\end{align}
where $y_1$,\dots,$y_n$ are commutative variables
and $B=(b_{ij})_{i,j=1}^n$ is a skew-symmetric matrix.
Then, using formula \eqref{eq:pbf1}, we have
\begin{align}
\label{eq:ham1}
\{ \mathrm{Li}_2(-y_k),y_i\}
=
-b_{ki}\log(1+y_k) \cdot y_i.
\end{align}
This is an infinitesimal form (of the automorphism part) of the mutation
of $y$-variables ($X$-variables in \cite{Fock07}) in cluster algebras.
Therefore, one may regard the function $\mathrm{Li}_2(-y_k)$
as a Hamiltonian  with continuous time variable,
and the ordinary mutation is obtained as the time one flow of this Hamiltonian.
This viewpoint naturally guided the authors of  \cite{Fock03,Fock07b,Fock07} to quantize the cluster algebras using  the {\em quantum dilogarithm}.

Meanwhile, there is another story which developed independently, connecting the dilogarithm and cluster algebras,
where the {\em Rogers dilogarithm}
\begin{align}
%\label{eq:L1}
L(x)
=
-\frac{1}{2}
\int_0^{x}
\left\{
\frac{\log (1-y)}{y}
+
\frac{\log y}{1-y}
\right\}
dy
=\mathrm{Li_2}(x)+\frac{1}{2}\log x \log(1-x)
\end{align}
plays the central role.
The function $L(x)$ is a variant of $\mathrm{Li}_2(x)$
and it is known that  many dilogarithm identities are simplified
in terms of $L(x)$.
In the 90's, several conjectures on dilogarithm identities for $L(x)$
were given through the study of 
so called {\em $Y$-systems} in integrable models of  Yang-Baxter type.
Later, the connection between $Y$-systems and cluster algebras was recognized
\cite{Fomin03b},
and  these dilogarithm conjectures were solved using the cluster algebraic method
\cite{Chapoton05, Nakanishi09,Inoue10a,Inoue10b}
with the help of the {\em constancy condition} from \cite{Frenkel95}.
Then, these results were further generalized to the following theorem
\cite{Nakanishi10c}: {\em For any period in a cluster algebra,
there is an associated dilogarithm identity of $L(x)$}.

It is interesting to understand how these  seemingly independent
appearances of the Euler and Rogers dilogarithms
are intrinsically related.
In \cite{Kashaev11} it was clarified that 
the  dilogarithm identities in \cite{Nakanishi10c} are recovered
through the semiclassical analysis of the corresponding {\em quantum dilogarithm identities},
where the Rogers dilogarithm emerges through the relation
\begin{align}
\label{eq:Leg1}
L\left(\frac{x}{1+x}\right)
&=-\mathrm{Li_2}(-x)-\frac{1}{2}\log x \log(1+x).
\end{align}
Furthermore,
the result therein yields the following curious observation:
{\em
The relation \eqref{eq:Leg1} can be
regarded as the  Legendre transformation in classical mechanics,
where the Euler dilogarithm is the Hamiltonian, while the Rogers
dilogarithm is the Lagrangian.}
However,
to formulate and establish the claim precisely,  we need  {\em canonical variables} for the Poisson bracket
\eqref{eq:bra1}.

Motivated by the above question, in this paper we introduce
canonical variables of the Poisson bracket \eqref{eq:bra1},
and study the Hamiltonian and Lagrangian formalisms of
mutations in cluster algebras.
As a consequence, the above observation is justified;
furthermore,
we  can successfully explain how
the dilogarithm identities in \cite{Nakanishi10c}
naturally arise in the Hamiltonian/Lagrangian picture.

 \subsection{Outline and main results}
 Let us briefly describe the outline and the main results
of the paper.

In Section 2, we recall basic definitions and properties
of mutations in cluster algebras and of the dilogarithm functions
which we are going to use.

In Section 3, we introduce the Hamiltonian formalism of
mutations in cluster algebras with canonical variables.
The $x$- and $y$-variables in a cluster algebra
 are constructed as  exponentials of linear combinations of
 the canonical variables,
 while the Hamiltonian is given by the Euler dilogarithm.
The time one flow of the Hamiltonian,
together with the tropical transformation,
 yields
the mutation of the $x$- and $y$-variables
(Theorem \ref{thm:Hs3}). 
This naturally extends the Hamiltonian formalism in
\cite{Fock07}.
An interesting feature here is that the $y$-variables mutate properly on
the total phase space $M\simeq \mathbb{R}^{2n}$, while 
the $x$-variables do so only on
a certain subspace $M_0$ of the phase space.
This does not have an effect on the equations of motion;
but does affect the {\em quantization} of the Poisson bracket
in the following way:
\begin{itemize}
\item
The canonical quantization of the
Poisson bracket leads to the quantization of the $y$-variables by
\cite{Fock03,Fock07}.
\item
On the other hand, the Poisson bracket on the small phase space
$M_0$ is redefined via the {\em Dirac bracket} due to \cite{Dirac50}.
Then, 
the ``canonical quantization'' of the
Dirac bracket leads to the quantization of the $x$-variables by
\cite{Berenstein05b}.
\end{itemize}
Therefore, the formulation here provides a common platform
for the quantization of both $x$- and $y$-variables.

In Section 4, having the canonical variables at hand,
 we study the Lagrangian formalism of mutations.
A specific feature here is that
 the Hamiltonian in Section 3 is {\em singular}.
 This implies that
 we do not have a Lagrangian whose Euler-Lagrange
 equations are fully equivalent to the equations of motion of the Hamiltonian.
Despite this deficiency, one can still define the Lagrangian
 through the Legendre transformation.
 Then, the following fact holds:
 \par
% \noindent
 {\bf Fact 1.} (Proposition \ref{prop:Leg2})
 The Lagrangian coincides with the Rogers dilogarithm
 on the above small phase space $M_0$.
 \par
This justifies the observation stated in the previous subsection.

In Section 5, we make a little detour to establish
 the periodicity property of the canonical variables.
In the Hamiltonian formalism we naturally introduce a sign $\varepsilon=\pm$
to decompose  each seed mutation into the tropical and nontropical parts.
The mutation of  $x$- and $y$-variables are independent of the choice of the sign
$\varepsilon$, while
the mutation of the canonical variables depends on it.
Thus, we call these mutations  {\em signed mutations}.
We have the following result,
which is crucial for obtaining the final result in the next section.
 \par
%  \noindent
 {\bf Fact 2.} (Proposition \ref{prop:period2})
A sequence of  signed mutations enjoys the same periodicity
as a sequence of  seed mutations,
if we choose the sign sequence therein  as the {\em tropical sign sequence}.

In Section 6, we show how the dilogarithm identities  in \cite{Nakanishi10c}
arise from the Hamiltonian/Lagrangian point of view.
 This is done by considering the {\em action integral} of the singular
 Lagrangian from Section 4 along a Hamiltonian flow.
 There are two key facts:
  \par
 % \noindent
 {\bf Fact 3.} (Proposition \ref{prop:const1})
 The Lagrangian is piecewise constant along the flow.
  \par
 % \noindent
  {\bf Fact 4.}   (Theorem \ref{thm:const1})
 The  action integral does not depend on the flow,
 if all flows are periodic for the time span of the integral.
 (This is regarded as the converse of a finite time analogue of Noether's theorem,
 see Remark \ref{rem:noether1}.)
\par
 The dilogarithm identities   in \cite{Nakanishi10c} are
 obtained as  an immediate consequence
 of the above Facts 1--4.
 This is the main result of the paper
 (Theorems \ref{thm:const3} and \ref{thm:equiv1}).
 
\begin{rem}
Theorem 6.8  can be straightforwardly extended to
 generalized cluster algebras and dilogarithm functions of higher degree
 studied in \cite{Nakanishi16}.
Meanwhile,  Theorem 6.9 can be also extended to them
  if we admit  the version of Theorem 6.11 
 for generalized cluster algebras, which is not yet available.
\end{rem}
 
 %\bigskip
{\em Acknowledgements.} 
The idea of regarding the relation
between the Euler and Rogers dilogarithms appearing in the semiclassical analysis
in \cite{Kashaev11} as a Legendre transformation
was first suggested to T. N. by Junji Suzuki
at the workshop ``Infinite Analysis 11 Winter School: Quantum Cluster Algebras"
held at Osaka University in December, 2011.
We greatly thank him for his valuable suggestion.
This work is supported in part by JSPS Grant No. 16H03922 to T. N. and by M.G.'s NSF grant No. 1362801.

\section{Preliminaries}

\subsection{Mutations in cluster algebras}
\label{subsec:mut1}

Let us recall two main notions in cluster algebras,
namely, a {\em seed} and its {\em mutation}.
See  \cite{Fomin02,Fomin07} for more information
  on cluster algebras.

Let us fix a positive integer $n$ throughout the paper.
We say that an $n\times n$ integer matrix $B=(b_{ij})_{i,j=1}^n$
is
{\em skew-symmetrizable}
if there is  a diagonal matrix $D=\mathrm{diag}(d_1,\dots,d_n)$
with positive integer diagonal
entries $d_1,\dots,d_n$ such that $DB$ is skew-symmetric,
i.e., $d_ib_{ij}= - d_j b_{ji}$.
We call such $D$ a {\em skew-symmetrizer} of $B$.

Let us fix a semifield $\mathbb{P}$,
that is,  an abelian multiplicative group
with a binary operation $\oplus$ called the {\em addition},
which is commutative, associative, and distributive, i.e., $a(b\oplus c)=
ab \oplus ac$.
Let $\mathbb{Z}\mathbb{P}$ be the group ring of $\mathbb{P}$.
Since $\mathbb{Z}\mathbb{P}$ is a domain \cite{Fomin02},
 the field of fractions of $\mathbb{Z}\mathbb{P}$
 is well defined
and we denoted it by $\mathbb{Q}\mathbb{P}$.
Let $\mathcal{F}=\mathcal{F}_{\mathbb{P}}$ be a purely transcendental
field extension of $\mathbb{Q}\mathbb{P}$ of degree $n$,
that is $\mathcal{F}$ is isomorphic to
a rational function field of $n$ variables
with coefficients in $\mathbb{Q}\mathbb{P}$.
We call the semifield $\mathbb{P}$ and
the field $\mathcal{F}$ the {\em coefficient semifield}
and the {\em ambient field} (of a cluster algebra
under consideration), respectively.

A {\em seed with coefficients in $\mathbb{P}$}
is
 a triplet $(B,x,y)$ consisting of
an $n\times n$ skew-symmetrizable matrix $B$,
an $n$-tuple  $(x_i)_{i=1}^n$ of algebraically independent 
elements in $\mathcal{F}$,
and an  $n$-tuple  $(y_i)_{i=1}^n$ of elements in $\mathbb{P}$.
For each $k=1,\dots,n$, the {\em mutation of a seed $(B,x,y)$ at $k$}
is another seed $(B',x',y')=\mu_k(B,x,y)$,
which is obtained from $(B,x,y)$ by the following formulas:

\begin{align}
\label{eq:Bmut1}
b'_{ij}
&=
\begin{cases}
-b_{ij}
& \text{$i=k$ or $j=k$}\\
b_{ij}+[-\varepsilon b_{ik}]_+ b_{kj}
+ b_{ik} [\varepsilon b_{kj}]_+
&
i,j\neq k,
\end{cases}
\\
\label{eq:xmut1}
x_i '
&= 
\begin{cases}
\displaystyle
x_k^{-1} \bigg(\prod_{j=1}^n x_j^{[-
\varepsilon b_{jk}]_+}\bigg)
\frac{1+\hat{y}_k^{\varepsilon}}
{ 1\oplus y_k^{\varepsilon}} &i=k\\
x_i
& i\neq k,\\
\end{cases}
\\
\label{eq:ymut1}
y_i '&= 
\begin{cases}
y_k^{-1} &i=k\\
y_i y_k^{[\varepsilon b_{ki}]_+}(1\oplus y_k^{\varepsilon})^{-b_{ki}}
& i\neq k,\\
\end{cases}
\end{align}
where
\begin{align}
\label{eq:yhat2}
\hat{y}_i := y_i \prod_{j=1}^n x_j^{b_{ji}},
\end{align}
and $\varepsilon$ is a sign, $+$ or $-$, which is naturally identified with $1$ or $-1$,
respectively.
Then we have the following properties:
\par
(1). The right hand sides of \eqref{eq:Bmut1}--\eqref{eq:ymut1}
are  independent of
the choice of sign $\varepsilon$.
\par
(2).
If $D$ is a skew-symmetrizer of $B$,
then it is also a skew-symmetrizer of  $B'$.
\par
(3).
The mutation $\mu_k$ is involutive, namely,
\begin{align}
\label{eq:inv1}
\mu_k \circ \mu_k=\mathrm{id}.
\end{align}
\par
(4). The $\hat{y}$-variables \eqref{eq:yhat2} also mutate in $\mathcal{F}$ as the $y$-variables;
namely,
\begin{align}
\label{eq:yhmut1}
\hat{y}_i '&= 
\begin{cases}
\hat{y}_k^{-1} &i=k\\
\hat{y}_i \hat{y}_k^{[\varepsilon b_{ki}]_+}(1+ \hat{y}_k^{\varepsilon})^{-b_{ki}}
& i\neq k.\\
\end{cases}
\end{align}
%\begin{rem}
%We stress that the $y$-variables here are {\em not} the coefficients of the $x$-variables.
%Therefore, we can treat the $x$-and $y$-variables separately.
%\end{rem}

%For convenience we treat the $x$ and $y$-variables separately.

%Let us start from  the $x$-variables.
Let $ \mathcal{F}^{n}_0$ be the set of all $n$-tuples of
algebraically independent elements in $\mathcal{F}$,
and, as usual, let $\mathbb{P}^{n}$ be the set of all $n$-tuples of
elements in $\mathbb{P}$.
Let us extract the ``variable part"
of the mutation $\mu_k$
in  \eqref{eq:xmut1} and \eqref{eq:ymut1}
as
\begin{align}
\label{eq:mukb1}
\begin{matrix}
\mu^B_{k}: & \mathcal{F}^{n}_0\times \mathbb{P}^n &
\rightarrow 
&
 \mathcal{F}^{n}_0\times \mathbb{P}^n\\
&
(x,y) &
\mapsto
&
(x',y'),
\end{matrix}
\end{align}
and call it the {\em mutation at $k$ by $B$}.
The involution property \eqref{eq:inv1} is equivalent to
the  inversion relation,
\begin{align}
\label{eq:inv2}
\mu_{k}^{B_k}\circ \mu_k^B
= \mathrm{id},
\end{align}
where $B_k=B'$ is the one  in \eqref{eq:Bmut1}.

Following the idea of \cite{Fock03}, we decompose 
the mutation $\mu^B_{k}$
into two parts.
For each sign $\varepsilon =\pm$,
we introduce a map
\begin{align}
&\begin{matrix}
\rho^{B}_{k,\varepsilon}: &\mathcal{F}^{n}_0\times \mathbb{P}^n&
\rightarrow 
&
\mathcal{F}^{n}_0\times \mathbb{P}^n\\
&
(x,y) &
\mapsto
&
(\tilde{x},\tilde{y}),
\end{matrix}\\
\label{eq:auto2}
&
\displaystyle
\qquad\qquad
\tilde{x}_i = x_i 
\left(
\frac{1+\hat{y}_k^{\varepsilon}}
{1\oplus {y}_k^{\varepsilon}}
\right)
^{-\delta_{ki}},
\\
\label{eq:auto3}
&\qquad\qquad
\tilde{y}_i = y_i (1\oplus {y}_k^{\varepsilon})^{-b_{ki}},
\end{align}
and also a map
\begin{align}
\label{eq:tau7}
&\begin{matrix}
\tau^{B}_{k,\varepsilon}: &\mathcal{F}^{n}_0\times \mathbb{P}^n&
\rightarrow 
&
\mathcal{F}^{n}_0\times \mathbb{P}^n\\
&
({x},y) &
\mapsto
&
({x}',y'),
\end{matrix}\\
\label{eq:trop1}
&\qquad\qquad
x'_i =
\begin{cases}
\displaystyle
{x}_k^{-1}  \bigg(\prod_{j=1}^n {x}_j^{[-\varepsilon b_{jk} ]_+}\bigg)& i=k
\\
 {x}_i 
& i\neq k,
\end{cases}
\\
\label{eq:trop2}
&\qquad\qquad
y'_i =
\begin{cases}
\displaystyle
{y}_k^{-1} & i=k
\\
 {y}_i y_k^{[\varepsilon b_{ki}]_+} 
& i\neq k.
\end{cases}
\end{align}
Then, for each sign $\varepsilon=\pm$, the mutation $\mu^{B}_{k}$ is decomposed as
\begin{align}
\label{eq:decom1}
\mu^{B}_{k}=\tau^{B}_{k,\varepsilon}\circ \rho^{B}_{k,\varepsilon}.
\end{align}

 In \cite{Fock03},
the transformations
$\rho^{B}_{k,\varepsilon}$ and $\tau^{B}_{k,\varepsilon}$
for $\varepsilon=+$ were considered,
and they were called
the {\em automorphism part} and the {\em monomial part} of the mutation $\mu^B_{k}$,
respectively.
Here, we call 
$\rho^{B}_{k,\varepsilon}$ and $\tau^{B}_{k,\varepsilon}$
the {\em nontropical part} and the {\em tropical part} of the mutation $\mu^B_{k}$,
respectively.
See  \cite{Nakanishi11c} for the background of the terminology.

When the coefficient semifield $\mathbb{P}$ is taken to be the
{\em trivial semifield} $\mathbf{1}=\{1\}$, where $1\oplus 1=1$,
we say that the $x$-variables are {\em without coefficients\/}.
In that case
the transformations \eqref{eq:xmut1} and \eqref{eq:auto2} 
are simplified as
\begin{align}
\label{eq:xmut2}
x_i '
&= 
\begin{cases}
\displaystyle
x_k^{-1} \bigg(\prod_{j=1}^n x_j^{[-
\varepsilon b_{jk}]_+}\bigg)
(1+\hat{y}_k^{\varepsilon}) &i=k\\
x_i
& i\neq k,\\
\end{cases}
\\
\label{eq:auto4}
\displaystyle
\qquad\qquad
\tilde{x}_i &= x_i 
(1+\hat{y}_k^{\varepsilon})^{-\delta_{ki}},
\end{align}
respectively, while
\eqref{eq:yhat2} also reduces to
\begin{align}
\label{eq:yhatred1}
\hat{y}_i = \prod_{j=1}^n x_j^{b_{ji}}.
\end{align}

\subsection{Euler and Rogers dilogarithm functions}

Let us recall the definition of the Euler and Rogers dilogarithms.
See \cite{Lewin81,Zagier07} for more information.

The {\em Euler dilogarithm} $\mathrm{Li}_2(x)$ is originally defined as the following convergent series
with radius of convergence 1,
\begin{align}
\label{eq:Li1}
\mathrm{Li}_2(x)&=\sum_{n=1}^{\infty} \frac{x^n}{n^2}.
\end{align}
It has the integral expression
\begin{align}
\label{eq:Li2}
\mathrm{Li}_2(x)=-
\int_0^{x}
\frac{\log (1-y)}{y}
dy,
\quad
(x\leq 1),
\end{align}
where throughout the text we concentrate on the real region $x\leq 1$
so that there is no ambiguity due to multivaluedness of the integral.
Note that \eqref{eq:Li2} is also written as
\begin{align}
\label{eq:Li3}
\mathrm{Li}_2(-x)=
-
\int_0^{x}
\frac{\log (1+y)}{y}
dy,
\quad
(-1 \leq x).
\end{align}

On the other hand, the {\em Rogers dilogarithm} $L(x)$
is defined by the integral expression
\begin{align}
\label{eq:L1}
L(x)=-\frac{1}{2}
\int_0^{x}
\left\{
\frac{\log (1-y)}{y}
+
\frac{\log y}{1-y}
\right\}
dy,
\quad
(0\leq x\leq 1).
\end{align}
Again, since we concentrate on the real region $0\leq x\leq 1$,
there is no ambiguity due to multivaluedness of the integral.

These two dilogarithms are related by
\begin{align}
\label{eq:LLi1}
L(x)=\mathrm{Li_2}(x)+\frac{1}{2}\log x \log(1-x),
\quad
(0\leq x\leq 1),
\end{align}
which can be used as an alternative definition of the Rogers dilogarithm.
They are also related by the following less well-known formula:
\begin{align}
\label{eq:LLi2}
L\left(\frac{x}{1+x}\right)
&=-\mathrm{Li_2}(-x)-\frac{1}{2}\log x \log(1+x),
\quad
(0\leq x)\\
\label{eq:L3}
&=
\frac{1}{2}
\int_0^{x}
\left\{
\frac{\log (1+y)}{y}
-
\frac{\log y}{1+y}
\right\}
dy,
\quad
(0\leq x).
\end{align}
Formulas \eqref{eq:LLi1}--\eqref{eq:L3} can be most easily verified by
taking the derivative.

In view of the formulas \eqref{eq:LLi2} and \eqref{eq:L3},
it is convenient to introduce a function
\begin{align}
\label{eq:L4}
\tilde{L}(x)
&=
L\left(\frac{x}{1+x}\right)
=
\frac{1}{2}
\int_0^{x}
\left\{
\frac{\log (1+y)}{y}
-
\frac{\log y}{1+y}
\right\}
dy,
\quad
(0\leq x),
\end{align}
so that it satisfies the equality
\begin{align}
\label{eq:LLi3}
\tilde{L}(x)
&=-\mathrm{Li_2}(-x)-\frac{1}{2}\log x \log(1+x),
\quad
(0\leq x).
\end{align}
For simplicity, we still call the function $\tilde{L}(x)$ 
the Rogers dilogarithm.

%\begin{rem}
%The function $\tilde{L}(x)$  plays a key role in the {\em dilogarithm identities}
%associated with periods of cluster algebras \cite{Nakanishi10c}. 
%Namely, those dilogarithm identities take the form
%\begin{align}
%\sum_{t=1}^M \tilde{L}(y_{k_s}(t)) = \mathrm{const.},
%\end{align}
%where $y_{k_s}(t)$'s are certain components of $y$-variables in a cluster algebra.
%\end{rem}

\section{Hamiltonian formalism of mutations}

\subsection{Canonical and log-canonical variables}

Let $M$ be a symplectic manifold
with a global   Darboux chart $\varphi:M \buildrel \sim \over \rightarrow \mathbb{R}^{2n}$.
Let $(u,p)$,
 $u=(u_1,\dots, u_n)$,
 $p=(p_1,\dots, p_n)$,
be the {\em canonical coordinates}  of the chart.
Then, in the coordinates $(u,p)$, the {\em Poisson bracket} is given by
\begin{align}
\{f,g\} = \sum_{i=1}^n \left(\frac{\partial f}{\partial p_i}\frac{\partial g}{\partial u_i}
-
 \frac{\partial g}{\partial p_i}\frac{\partial f}{\partial u_i}
 \right)
\end{align}
for  any (smooth) functions  $f$ and $g$ on $M$.
We call $M$ the {\em phase space}. 
%We conveniently identify $M$ with $\mathbb{R}^{2n}$ by the
%global chart $\varphi$, unless otherwise mentioned.
%See  \cite{Abraham78}, for example,
%for the basics on the geometrical formulation of classical mechanics.
%

We recall some basic properties of the Poisson bracket which we use below.

\par
(1) We have
\begin{align}
\label{eq:pu1}
\{ p_i,u_j\}=\delta_{ij},
\quad
\{ u_i,u_j\}=\{ p_i,p_j\}=0.
\end{align}
\par
(2)
For any function $f$ on $M$,
\begin{align}
\{f,p_i\}=-\frac{\partial f}{\partial u_i},
\quad
\{f,u_i\}=\frac{\partial f}{\partial p_i}.
\end{align}
\par
(3) For any  functions $f$ and $g$ on $M$,
and any smooth functions $F(\zeta)$ and $G(\zeta)$ of a single variable $\zeta$,
\begin{align}
\label{eq:pbf1}
\{F(f),G(g)\}
=\{f,g\} F'(f) G'(g).
\end{align}
In particular,
the following formula holds:
\begin{align}
\label{eq:exp1}
\{e^f,e^g\}
=\{f,g\} e^f e^g.
\end{align}

Let us fix any $n\times n$ skew-symmetrizable (integer) matrix $B=(b_{ij})_{i,j=1}^n$
with a skew-symmetrizer $D$.
We introduce  variables (i.e., functions on $M$) $w_i, x_i, y_i$ ($i=1,\dots,n$)
as follows:
\begin{align}
w_i &= \sum_{j=1}^n b_{ji} u_j,\\
\label{eq:x1}
x_i & = e^{2 u_i},\\
\label{eq:y1}
y_i & = e^{d_ip_i+w_i}.
\end{align}

\begin{lem}
\label{lem:pw1}
We have the following formulas:
\begin{align}
\{d_ip_i+w_i,
d_j p_j+w_j\}
=2d_ib_{ij},
\quad
\{d_ip_i+w_i,
u_j\}
=d_i\delta_{ij}.
\end{align}
\end{lem}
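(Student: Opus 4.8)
The plan is to compute both brackets directly from the bilinearity of the Poisson bracket and the elementary relations \eqref{eq:pu1}. First I would expand $w_i = \sum_{k} b_{ki} u_k$ and $w_j = \sum_{\ell} b_{\ell j} u_\ell$, so that
\begin{align*}
\{d_i p_i + w_i,\ d_j p_j + w_j\}
&= d_i d_j \{p_i,p_j\}
 + d_i \sum_\ell b_{\ell j}\{p_i,u_\ell\}
 + d_j \sum_k b_{ki}\{u_k,p_j\}
 + \sum_{k,\ell} b_{ki} b_{\ell j}\{u_k,u_\ell\}.
\end{align*}
By \eqref{eq:pu1} the first and last sums vanish, $\{p_i,u_\ell\}=\delta_{i\ell}$ kills the sum over $\ell$ leaving $d_i b_{ij}$, and $\{u_k,p_j\}=-\delta_{kj}$ leaves $-d_j b_{ji}$. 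Thus the bracket equals $d_i b_{ij} - d_j b_{ji}$, and now the skew-symmetrizer hypothesis $d_i b_{ij} = -d_j b_{ji}$ turns this into $2 d_i b_{ij}$, as claimed.

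For the second formula, the same expansion gives
\begin{align*}
\{d_i p_i + w_i,\ u_j\}
= d_i \{p_i, u_j\} + \sum_k b_{ki}\{u_k, u_j\}
= d_i \delta_{ij} + 0 = d_i \delta_{ij},
\end{align*}
using $\{p_i,u_j\}=\delta_{ij}$ and $\{u_k,u_j\}=0$ from \eqref{eq:pu1}.

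There is essentially no obstacle here: the statement is a bookkeeping consequence of the canonical commutation relations together with the defining property of the skew-symmetrizer. The only point requiring the slightest care is keeping track of the sign in $\{u_k,p_j\}=-\{p_j,u_k\}=-\delta_{jk}$ so that the skew-symmetrizer relation is applied with the correct orientation; once that is done, both identities drop out immediately.
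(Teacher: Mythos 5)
Your computation is correct and is exactly the routine expansion the paper has in mind (the lemma is stated without proof there): bilinearity, the canonical relations $\{p_i,u_j\}=\delta_{ij}$, $\{u_i,u_j\}=\{p_i,p_j\}=0$, and the skew-symmetrizer identity $d_ib_{ij}=-d_jb_{ji}$ to turn $d_ib_{ij}-d_jb_{ji}$ into $2d_ib_{ij}$. Nothing further is needed.
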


\begin{defn}
Following \cite{Gekhtman02}, we say that a family of variables
$z_1$, \dots, $z_m$ is {\em log-canonical}
if their pairwise Poisson brackets are of the form
\begin{align}
\{z_i,z_j\} = c_{ij} z_i z_j,
\end{align}
where each $c_{ij}$ is a constant.
\end{defn}

\begin{prop}
The variables $x_1,\dots,x_n$ and $y_1,\dots,y_n$ are log-canonical with the following Poisson brackets:
\begin{align}
\label{eq:lc1}
\{x_i,x_j\}=0,
\quad
\{y_i,y_j\}=2d_i b_{ij}y_i y_j,
\quad
\{y_i,x_j\}=2d_i\delta_{ij}  y_i x_j.
\end{align}
\end{prop}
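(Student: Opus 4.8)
The plan is to reduce everything to formula \eqref{eq:exp1}, which expresses the Poisson bracket of two exponentials $e^f,e^g$ in terms of $\{f,g\}$ times the product $e^fe^g$. Since each of $x_i$ and $y_i$ is, by \eqref{eq:x1} and \eqref{eq:y1}, the exponential of a \emph{linear} function of the canonical coordinates $(u,p)$, the relevant exponent brackets are Poisson brackets of linear functions, hence constants; this is precisely what makes the families $\{x_i\}$ and $\{y_i\}$ log-canonical, and the constants are read off from \eqref{eq:pu1} and Lemma \ref{lem:pw1}.

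Concretely, I would carry out the three computations in turn. First, for the $x$-variables, applying \eqref{eq:exp1} to $x_i=e^{2u_i}$, $x_j=e^{2u_j}$ gives $\{x_i,x_j\}=\{2u_i,2u_j\}\,x_ix_j=4\{u_i,u_j\}\,x_ix_j=0$, using $\{u_i,u_j\}=0$ from \eqref{eq:pu1}. Second, for the $y$-variables, applying \eqref{eq:exp1} to $y_i=e^{d_ip_i+w_i}$, $y_j=e^{d_jp_j+w_j}$ gives $\{y_i,y_j\}=\{d_ip_i+w_i,\,d_jp_j+w_j\}\,y_iy_j=2d_ib_{ij}\,y_iy_j$ by the first identity of Lemma \ref{lem:pw1}. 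Third, for the mixed bracket, applying \eqref{eq:exp1} to $y_i=e^{d_ip_i+w_i}$, $x_j=e^{2u_j}$ gives $\{y_i,x_j\}=\{d_ip_i+w_i,\,2u_j\}\,y_ix_j=2\{d_ip_i+w_i,u_j\}\,y_ix_j=2d_i\delta_{ij}\,y_ix_j$ by the second identity of Lemma \ref{lem:pw1}. This yields exactly the three formulas in \eqref{eq:lc1}.

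There is no real obstacle here: the proposition is an immediate corollary of Lemma \ref{lem:pw1} and the exponential rule \eqref{eq:exp1}, and the only point worth stating explicitly is that the coefficients produced are genuinely constant (because the exponents are affine-linear in $(u,p)$), which is what the term \emph{log-canonical} requires. If one wishes, one can also verify Lemma \ref{lem:pw1} inline from the definition $w_i=\sum_j b_{ji}u_j$ together with \eqref{eq:pu1}, so that the whole proposition becomes a short direct calculation; but since Lemma \ref{lem:pw1} is already available, invoking it keeps the argument cleanest.
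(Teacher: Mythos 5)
Your proof is correct and follows exactly the paper's route: the paper's own proof simply cites \eqref{eq:pu1}, \eqref{eq:exp1}, and Lemma \ref{lem:pw1}, and your write-up is just the explicit version of that calculation. Nothing is missing.
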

\begin{proof}
Follows from \eqref{eq:pu1}, \eqref{eq:exp1}, and Lemma \ref{lem:pw1}.
% For example,
%\begin{align}
%\begin{split}
%\{y_i,y_j\}&=
% \sum_{k=1}^n \left(\frac{\partial y_i}{\partial p_k}\frac{\partial y_j}{\partial u_k}
%-
% \frac{\partial y_j}{\partial p_k}\frac{\partial y_i}{\partial u_k}
% \right)
% \\
% &=
% \sum_{k=1}^n \left(\delta_{ik}d_iy_i b_{kj}y_j
%-
% \delta_{jk}d_j y_j b_{ki}y_i
% \right)\\
%& =y_i d_i b_{ij}y_j - y_j d_j b_{ji}y_i
% =2d_i b_{ij}y_i y_j.
% \end{split}
%\end{align}
\end{proof}

\subsection{Hamiltonian for infinitesimal nontropical mutation}

For any  $k\in \{1,\dots,n\}$
and a sign $\varepsilon=\pm$, we introduce the {\em Hamiltonian function}
$H^{B}_{k,\varepsilon}$ on $M$ by
\begin{align}
\label{eq:Hk1}
H^{B}_{k,\varepsilon}=\frac{\varepsilon}{2d_k} \mathrm{Li}_2(-y_k^{\varepsilon})
=-\frac{\varepsilon}{2d_k}\int_0^{y_k^{\varepsilon}} \frac{\log(1+z)}{z} dz,
\end{align}
where $ \mathrm{Li}_2(x)$ is the Euler dilogarithm  \eqref{eq:Li2},
and  we used the expression \eqref{eq:Li3}.

Let $t$ be the time variable.
We consider the Hamiltonian flow
on the phase space $M$
 by the Hamiltonian $H^{B}_{k,\varepsilon}$.
Accordingly,
 we have functions of $t$,
$u_i(t)$, $p_i(t)$, $w_i(t)$, etc.,
which
obey the following {\em equations of motion},
where we use the standard notation  $ \dot{f}=d f/dt$.
\begin{prop} 
\label{prop:Hs1}
(1)
The equations of motion are given as follows:
\begin{align}
\label{eq:Hu1}
\dot{u}_i(t)&=
\big\{H^{B}_{k,\varepsilon}, u_i(t)\big\}=
-\frac{1}{2}\delta_{ki} \log \big(1+y_k(t)^{\varepsilon}\big),
\\
\label{eq:Hp1}
 \dot{p}_i(t)&=
\big\{H^{B}_{k,\varepsilon}, p_i(t)\big\}=
-\frac{1}{2d_i}b_{ki} \log \big(1+y_k(t)^{\varepsilon}\big),
\\
\label{eq:Hw1}
\dot{w}_i(t)&=
\big\{H^{B}_{k,\varepsilon}, w_i(t)\big\}=
-\frac{1}{2}b_{ki} \log \big(1+y_k(t)^{\varepsilon}\big),
\\
\label{eq:Hdp1}
d_i \dot{p}_i(t)&=
\big\{H^{B}_{k,\varepsilon}, d_i p_i(t)\big\}=
-\frac{1}{2}b_{ki} \log \big(1+y_k(t)^{\varepsilon}\big),
\\
\label{eq:Hx1}
\dot{x}_i(t)&=
\big\{H^{B}_{k,\varepsilon}, x_i(t)\big\}=
-\delta_{ki} \log \big(1+y_k(t)^{\varepsilon}\big) \cdot x_i(t),
\\
\label{eq:Hy1}
\dot{y}_i(t)&=
\big\{H^{B}_{k,\varepsilon}, y_i(t)\big\}=
-b_{ki} \log \big(1+y_k(t)^{\varepsilon}\big) \cdot y_i(t).
\end{align}
\par
(2) In particular, $\dot{y}_k(t)=0$, so that
$y_k(t)$ in the right hand sides of
\eqref{eq:Hu1}--\eqref{eq:Hy1} does not depend on $t$.
\end{prop}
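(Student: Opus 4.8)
The plan is to derive all six bracket formulas from two short computations. Write $H := H^{B}_{k,\varepsilon}$; the equations of motion are $\dot{f} = \{H, f\}$ for every function $f$ on $M$, so it suffices to evaluate these brackets. Since $H = \Phi(y_k)$ with $\Phi(\zeta) := \frac{\varepsilon}{2d_k}\mathrm{Li}_2(-\zeta^{\varepsilon})$, the chain-rule specialization of \eqref{eq:pbf1} (take one of the two functions to be the identity), namely $\{F(f), g\} = F'(f)\{f, g\}$, gives $\{H, g\} = \Phi'(y_k)\{y_k, g\}$ for any $g$. The first computation is $\Phi'$: using $\mathrm{Li}_2'(x) = -\log(1-x)/x$ from \eqref{eq:Li2} together with $\varepsilon^2 = 1$, a direct differentiation gives $\Phi'(\zeta) = -\frac{1}{2d_k}\cdot\frac{\log(1+\zeta^{\varepsilon})}{\zeta}$, whence $\Phi'(y_k)\,y_k = -\frac{1}{2d_k}\log(1+y_k^{\varepsilon})$. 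Here the explicit $\varepsilon$ in $H$ cancels the $\varepsilon$ produced by differentiating $-\zeta^{\varepsilon}$, which is exactly what makes the two signs $\varepsilon=\pm$ uniform.

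Next, set $q_i := d_ip_i + w_i$, so that $y_i = e^{q_i}$ and, again by \eqref{eq:pbf1}, $\{y_k, g\} = y_k\{q_k, g\}$. The second ingredient is then supplied directly by Lemma \ref{lem:pw1}: $\{q_k, u_i\} = d_k\delta_{ki}$ and $\{q_k, q_i\} = 2d_kb_{ki}$. Combining with the above, I obtain
\[
\{H, u_i\} = -\tfrac{1}{2}\delta_{ki}\log(1+y_k^{\varepsilon}),
\qquad
\{H, q_i\} = -b_{ki}\log(1+y_k^{\varepsilon}),
\]
the first of which is \eqref{eq:Hu1}. All remaining formulas follow by linearity of the bracket together with \eqref{eq:pbf1}: since $w_i = \sum_j b_{ji}u_j$, the first identity gives \eqref{eq:Hw1}; since $d_ip_i = q_i - w_i$, subtracting gives \eqref{eq:Hdp1}, and then dividing by $d_i$ gives \eqref{eq:Hp1}; since $x_i = e^{2u_i}$, one gets $\{H, x_i\} = 2x_i\{H, u_i\}$, which is \eqref{eq:Hx1}; and since $y_i = e^{q_i}$, one gets $\{H, y_i\} = y_i\{H, q_i\}$, which is \eqref{eq:Hy1}.

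For part (2), putting $i = k$ in \eqref{eq:Hy1} and using $b_{kk} = 0$ (the diagonal entries of a skew-symmetrizable matrix vanish) gives $\dot{y}_k(t) = 0$, so $y_k(t)$ is constant along the flow; hence it equals its initial value and may be treated as a constant wherever it occurs on the right-hand sides of \eqref{eq:Hu1}--\eqref{eq:Hy1}.

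I do not anticipate a genuine obstacle: the argument is a short computation resting on \eqref{eq:pbf1} and Lemma \ref{lem:pw1}. The only point requiring care is the bookkeeping of the sign $\varepsilon$ and of the exponents when differentiating $\mathrm{Li}_2(-y_k^{\varepsilon})$; once the identity $\Phi'(y_k)\,y_k = -\frac{1}{2d_k}\log(1+y_k^{\varepsilon})$ is established, everything else is linear algebra in the canonical coordinates $(u,p)$.
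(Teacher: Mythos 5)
Your proposal is correct and follows essentially the same route as the paper: a chain-rule evaluation of $\{H^{B}_{k,\varepsilon},\,\cdot\,\}$ reducing to the bracket of $y_k$ (equivalently $d_kp_k+w_k$) with the canonical variables, with the sign $\varepsilon$ and the skew-symmetrizer cancelling exactly as in the paper's two displayed computations. Your organization via \eqref{eq:pbf1} and Lemma \ref{lem:pw1}, deriving the remaining formulas by linearity and the explicit use of $b_{kk}=0$ for part (2), is a clean bookkeeping of the same argument.
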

\begin{proof}
For example,
\begin{align}
\begin{split}
\big\{H^{B}_{k,\varepsilon}, u_i\big\}
&=\frac{\partial{H^{B}_{k,\varepsilon}}}{\partial p_i}
=\frac{d{H^{B}_{k,\varepsilon}}}{d y_k^{\varepsilon}}
\frac{d y_k^{\varepsilon}}{d y_k}
\frac{\partial{y_k}}{\partial p_i}
\\
&=
\left(-\frac{\varepsilon}{2 d_k} \frac{\log(1+y_k^{\varepsilon})}{y_k^{\varepsilon}}
\right)
(\varepsilon y_k^{\varepsilon-1})
(\delta_{ki}d_k y_k)
=
-\frac{1}{2}\delta_{ki} \log (1+y_k^{\varepsilon}),
\end{split}
\\
\begin{split}
\big\{H^{B}_{k,\varepsilon},  p_i\big\}
&=-\frac{\partial{H^{B}_{k,\varepsilon}}}{\partial u_i}
=-
\frac{d{H^{B}_{k,\varepsilon}}}{d y_k^{\varepsilon}}
\frac{d y_k^{\varepsilon}}{d y_k}
\frac{\partial{y_k}}{\partial u_i}
\\
&=
-
\left(-\frac{\varepsilon}{2 d_k} \frac{\log(1+y_k^{\varepsilon})}{y_k^{\varepsilon}}
\right)
(\varepsilon y_k^{\varepsilon-1})
(b_{ik}y_k)
=
-\frac{1}{2 d_i}b_{ki} \log (1+y_k^{\varepsilon}),
\end{split}
\end{align}
where $d_i b_{ik}= - d_k b_{ki}$ is used for the last equality.
\end{proof}

From Proposition \ref{prop:Hs1} one can also observe the following.
(In fact, it is a direct consequence of the fact that ${H^{B}_{k,\varepsilon}}$
only depends on the variable $y_k=\exp(d_k p_k + w_k)$.)
\begin{prop}
\label{prop:int1}
(1) The variables $u_1,\dots,u_{k-1},u_{k+1},\dots,u_n$ and $p_k$
are integrals of motion in involution.
Therefore, the Hamiltonian ${H^{B}_{k,\varepsilon}}$ is completely integrable.
\par
(2) The flows of the variables $u_k$ and $p_1,\dots,p_{k-1},p_{k+1},\dots,p_n$
are linear in $t$.
\end{prop}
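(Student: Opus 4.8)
The plan is to obtain both parts as direct consequences of Proposition \ref{prop:Hs1}, together with the structural observation already noted: $H^{B}_{k,\varepsilon}$ is a function of the single variable $y_k=\exp(d_kp_k+w_k)$ with $w_k=\sum_{j=1}^n b_{jk}u_j$. Since $B$ is skew-symmetrizable we have $d_kb_{kk}=-d_kb_{kk}$, hence $b_{kk}=0$, so $w_k=\sum_{j\neq k}b_{jk}u_j$; thus $y_k$, and therefore $H^{B}_{k,\varepsilon}$, depends only on the $n$ coordinate functions $p_k$ and $u_j$ ($j\neq k$).

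For part (1), I would first verify that these $n$ functions are integrals of motion. This can be read off from \eqref{eq:Hu1} and \eqref{eq:Hp1}: for $i\neq k$ the factor $\delta_{ki}$ in $\dot u_i$ vanishes, and the factor $b_{ki}$ in $\dot p_k$ is $b_{kk}=0$, so $\dot u_i(t)=0$ ($i\neq k$) and $\dot p_k(t)=0$. Equivalently, by \eqref{eq:pbf1} each bracket $\{H^{B}_{k,\varepsilon},z\}$ with $z\in\{p_k\}\cup\{u_j:j\neq k\}$ is a scalar multiple of $\{y_k,z\}=\{d_kp_k+w_k,z\}\,y_k$, which vanishes because $\{p_k,u_j\}=\delta_{kj}=0$ for $j\neq k$, $\{p_k,p_k\}=0$, and $\{u_i,u_j\}=0$. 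The same relations show these $n$ functions are pairwise in involution: $\{u_i,u_j\}=0$ by \eqref{eq:pu1} and $\{p_k,u_j\}=0$ for $j\neq k$. Finally, their differentials are members of the global Darboux coframe $(du_1,\dots,du_n,dp_1,\dots,dp_n)$, so they are functionally independent at every point of $M$; hence we have $n$ functionally independent commuting integrals of motion on the $2n$-dimensional symplectic manifold $M$, and complete integrability follows from the Liouville--Arnold theorem.

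For part (2), I would invoke Proposition \ref{prop:Hs1}(2): since $\dot y_k(t)=0$, the function $\log(1+y_k(t)^{\varepsilon})$ appearing on the right-hand sides of \eqref{eq:Hu1}--\eqref{eq:Hy1} is constant in $t$. Then \eqref{eq:Hu1} gives $\dot u_k(t)=-\tfrac12\log(1+y_k^{\varepsilon})$, a constant, and \eqref{eq:Hp1} gives $\dot p_i(t)=-\tfrac{1}{2d_i}b_{ki}\log(1+y_k^{\varepsilon})$, a constant, for each $i\neq k$; integrating in $t$ shows that $u_k(t)$ and $p_i(t)$ ($i\neq k$) are affine-linear in $t$.

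I do not expect a genuine obstacle: the statement essentially repackages Proposition \ref{prop:Hs1}. The only point deserving a line of care is the functional independence needed to apply Liouville--Arnold, and this is immediate since the chosen integrals are coordinates of a global Darboux chart.
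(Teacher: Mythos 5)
Your proof is correct and follows essentially the paper's own route: the paper derives this proposition directly from Proposition \ref{prop:Hs1} together with the observation that $H^{B}_{k,\varepsilon}$ depends only on $y_k=\exp(d_kp_k+w_k)$, which is exactly your argument. The extra details you supply (that $b_{kk}=0$, the involution brackets, and functional independence for Liouville--Arnold) are just the routine verifications the paper leaves implicit.
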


Let us consider a {\em time one flow}
\begin{align}\label{eq:rho as flow}
\begin{matrix}
\rho^{B}_{k,\varepsilon}: & \mathbb{R}^{2n}
&\rightarrow &\mathbb{R}^{2n}\\
& (u,p) & \mapsto & (\tilde{u},\tilde{p}),
\end{matrix}
\end{align}
which is defined by
 the Hamiltonian flow
  from   time $t=0$ to $t=1$.
  Let $\tilde{w}_i, \tilde{x}_i,\tilde{y}_i$
  be the corresponding flows of $w_i, x_i, y_i$, respectively.
%As a general fact of Hamiltonian flows, the transformation $\rho^{B}_{k,\varepsilon}$ is canonical;
%namely, we have
%\begin{align}
%\{\tilde{p}_i,\tilde{u}_j\}=\delta_{ij},
%\quad
%\{\tilde{u}_i,\tilde{u}_j\}=0,
%\quad
%\{\tilde{p}_i,\tilde{p}_j\}=0.
%\end{align}
%\par
Let $\mathbb{R}_+$ be the semifield of all positive real numbers,
where the multiplication and the addition are given by the ordinary ones
for real numbers.

\begin{prop} 
\label{prop:Hs2}
We have the following formulas:
\begin{align}
\label{eq:Hu2}
\tilde{u}_i&=
u_i
-\frac{1}{2}
\delta_{ki} \log (1+y_k^{\varepsilon}),
\\
\label{eq:Hp2}
\tilde{p}_i&=
 p_i
-\frac{1}{2d_i}b_{ki} \log (1+y_k^{\varepsilon}),
\\
\label{eq:Hw2}
\tilde{w}_i&=
w_i
-\frac{1}{2}b_{ki} \log (1+y_k^{\varepsilon}),
\\
\label{eq:Hdp2}
d_i \tilde{p}_i&=
d_i p_i
-\frac{1}{2}b_{ki} \log (1+y_k^{\varepsilon}),
\\
\label{eq:Hx2}
\tilde{x}_i&=
x_i  (1+y_k^{\varepsilon})^{-\delta_{ki}},
\\
\label{eq:Hy2}
\tilde{y}_i&=
y_i  (1+y_k^{\varepsilon})^{-b_{ki} }.
\end{align}
In particular,
the transformation \eqref{eq:Hy2}
coincides with
the nontropical part of the mutation of
the $y$-variables in \eqref{eq:auto3}
with $\mathbb{P}=\mathbb{R}_+$.
%we idetify $\oplus$ in $\mathbb{P}$ with the addition in $\mathbb{R}_+$.
\end{prop}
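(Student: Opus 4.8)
The plan is to integrate the equations of motion from Proposition \ref{prop:Hs1} over the unit time interval, and then rewrite the result in terms of $w_i$, $x_i$, $y_i$ via their defining expressions.

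The key observation is Proposition \ref{prop:Hs1}(2): since $\dot y_k(t)=0$, the quantity $y_k(t)=y_k$ is constant along the flow, so the right-hand sides of \eqref{eq:Hu1} and \eqref{eq:Hp1} do not depend on $t$. Hence $u_i(t)$ and $p_i(t)$ are affine functions of $t$, and evaluating at $t=1$ with the initial data at $t=0$ gives
\[
\tilde u_i = u_i - \frac12\,\delta_{ki}\log(1+y_k^{\varepsilon}),
\qquad
\tilde p_i = p_i - \frac{1}{2d_i}\,b_{ki}\log(1+y_k^{\varepsilon}),
\]
which are \eqref{eq:Hu2} and \eqref{eq:Hp2}; multiplying the second by $d_i$ yields \eqref{eq:Hdp2}.

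To obtain the remaining formulas I would substitute these into $w_i=\sum_j b_{ji}u_j$, $x_i=e^{2u_i}$, $y_i=e^{d_ip_i+w_i}$. In $\tilde w_i=\sum_j b_{ji}\tilde u_j$ only the $j=k$ summand changes, which gives \eqref{eq:Hw2}; exponentiating $2\tilde u_i$ gives \eqref{eq:Hx2}; and since $d_i\tilde p_i+\tilde w_i=(d_ip_i+w_i)-b_{ki}\log(1+y_k^{\varepsilon})$, exponentiating gives \eqref{eq:Hy2}. (Equivalently, one may integrate \eqref{eq:Hw1}, \eqref{eq:Hx1}, \eqref{eq:Hy1} directly, again using that $y_k(t)$ is $t$-independent.) Finally, comparing \eqref{eq:Hy2} with \eqref{eq:auto3} and noting that for $\mathbb{P}=\mathbb{R}_+$ the auxiliary addition $\oplus$ is the ordinary addition, so $1\oplus y_k^{\varepsilon}=1+y_k^{\varepsilon}$, gives the last assertion.

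There is no real obstacle here: everything reduces to integrating right-hand sides that are constant in $t$, the constancy being precisely Proposition \ref{prop:Hs1}(2), followed by routine bookkeeping with the defining relations for $w$, $x$, $y$. The only point worth a second look is the mutual consistency of the two routes just mentioned — integrating the $u,p$ equations versus integrating the $w,x,y$ equations — which is guaranteed by the chain-rule identities already established in the proof of Proposition \ref{prop:Hs1}.
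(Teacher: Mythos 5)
Your proposal is correct and follows the same route as the paper, which proves the proposition simply by noting it follows from Proposition \ref{prop:Hs1}: since $y_k(t)$ is constant along the flow, the right-hand sides of the equations of motion are $t$-independent, so the time-one flow just adds those constants (and exponentiates for $x$, $y$). The bookkeeping you carry out with the defining relations for $w_i$, $x_i$, $y_i$ and the remark that $\oplus$ is ordinary addition in $\mathbb{R}_+$ is exactly what is implicit in the paper's one-line proof.
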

\begin{proof}
This follows from Proposition
\ref{prop:Hs1}.
\end{proof}

Therefore,
the Hamiltonian $H^{B}_{k,\varepsilon}$ provides the  infinitesimal generator
of
 the nontropical
mutation of $y$-variables of seeds.
This Hamiltonian viewpoint of mutations (without employing the canonical variables
$u_i$ and $p_i$) was first stated in \cite[Section 1.3]{Fock07}
for $\varepsilon=1$.

\subsection{Small phase space and $x$-variables}

The transformation \eqref{eq:Hx2}
of $x$-variables is comparable
to the nontropical part of the mutation of
$x$-variables {\em without coefficients} from \eqref{eq:auto4}.
However, in  contrast to the $y$-variable case,
they do {\em not} exactly
match
due to the discrepancy between $y_k$ and $\hat{y}_k$ therein.
To remedy this situation,
we introduce  a subspace  of the phase space $M$,
\begin{align}
\label{eq:sp1}
M_0=\{ \varphi^{-1}(u,p)\in M
\mid d_i p_i - w_i=0,\ (i=1,\dots,n)
\},
\end{align}
and we call it the {\em small phase space}.

\begin{prop}
\label{prop:small2}
The small phase space $M_0$ is preserved under the Hamiltonian flow
by $H^{B}_{k,\varepsilon}$.
\end{prop}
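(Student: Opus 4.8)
The plan is to show that the defining equations of $M_0$, namely $d_ip_i - w_i = 0$ for each $i$, are preserved along the Hamiltonian flow generated by $H^B_{k,\varepsilon}$. Since $M_0$ is cut out by the $n$ functions $g_i := d_ip_i - w_i$, it suffices to verify that each $g_i$ is an integral of motion, i.e. $\dot g_i(t) = \{H^B_{k,\varepsilon}, g_i\} = 0$, or at least that $\dot g_i$ vanishes on $M_0$ itself (which is all that is needed for invariance of the level set $\{g_i = 0\}$).

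First I would compute $\dot g_i(t) = d_i\dot p_i(t) - \dot w_i(t)$ directly from the equations of motion already established in Proposition~\ref{prop:Hs1}. Equation~\eqref{eq:Hdp1} gives $d_i\dot p_i(t) = -\frac{1}{2}b_{ki}\log(1+y_k(t)^\varepsilon)$, and equation~\eqref{eq:Hw1} gives $\dot w_i(t) = -\frac{1}{2}b_{ki}\log(1+y_k(t)^\varepsilon)$. Subtracting, $\dot g_i(t) = d_i\dot p_i(t) - \dot w_i(t) = 0$ identically, for every $i$. Hence each $g_i$ is a genuine integral of motion, not merely on $M_0$ but on all of $M$, and the locus $\{g_i = 0 \text{ for all } i\} = M_0$ is preserved by the flow. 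Equivalently, one can phrase this at the level of the time one flow: by \eqref{eq:Hdp2} and \eqref{eq:Hw2}, $d_i\tilde p_i - \tilde w_i = (d_ip_i - w_i) - \tfrac12 b_{ki}\log(1+y_k^\varepsilon) + \tfrac12 b_{ki}\log(1+y_k^\varepsilon) = d_ip_i - w_i$, so the value of $g_i$ is unchanged, and iterating (or using the flow property) gives invariance for all times.

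There is essentially no obstacle here: the key point is simply the coincidence of the right-hand sides of \eqref{eq:Hdp1} and \eqref{eq:Hw1}, which in turn rests on the skew-symmetrizer relation $d_ib_{ik} = -d_kb_{ki}$ used in the proof of Proposition~\ref{prop:Hs1}. The only mild care needed is to observe that $y_k(t)^\varepsilon$ appearing in these expressions is constant in $t$ by Proposition~\ref{prop:Hs1}(2), so the equations of motion for $g_i$ have no hidden $t$-dependence; but this is not even required for the conclusion $\dot g_i = 0$. One could also remark, as the paper does parenthetically elsewhere, that $H^B_{k,\varepsilon}$ depends only on $y_k = \exp(d_kp_k + w_k)$, so its flow moves $(u,p)$ only in directions along which $d_ip_i - w_i$ is constant; but the direct computation above is the cleanest route and I would present that.
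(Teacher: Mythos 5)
Your proof is correct and is essentially the paper's own argument: the paper's proof simply cites \eqref{eq:Hw1} and \eqref{eq:Hdp1}, whose right-hand sides coincide, so that $d_ip_i - w_i$ is conserved along the flow, exactly as you compute. Nothing further is needed.
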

\begin{proof}
This follows from 
 \eqref{eq:Hw1} and \eqref{eq:Hdp1}.
\end{proof}

Let us consider  the variables $\hat{y}_i$ ($i=1,\dots,n$)
in  \eqref{eq:yhat2} {\em without coefficients},
namely,
\begin{align}
\label{eq:yhat1}
\hat{y}_i &:= e^{2w_i} 
=e^{2\sum_{j=1}^n b_{ji} u_j}
=
 \prod_{j=1}^n x_j^{b_{ji}}.
\end{align}
%This is a particular case of
% \eqref{eq:yhat2} without coefficients.
Since $d_ip_i=w_i$ on $M_0$, we have
\begin{align}
\label{eq:yyat1}
y_i=\hat{y}_i
\quad
\text{on $M_0$.}
\end{align}
Thus,  the transformation \eqref{eq:Hx2} assumes the desired form on $M_0$:
\begin{prop}
\label{prop:small1}
\begin{align}
\label{eq:Hx3}
\tilde{x}_i&=
x_i  (1+\hat{y}_k^{\varepsilon})^{-\delta_{ki}}
\quad
\text{on $M_0$},
\end{align}
In particular,
the transformation \eqref{eq:Hx3}
restricted to $M_0$
coincides with
the nontropical part of the  mutation of
the $x$-variables without coefficients in \eqref{eq:auto4}
under the specialization of $x$-variables in $\mathbb{R}_+$.
\end{prop}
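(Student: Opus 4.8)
The plan is to read off \eqref{eq:Hx3} directly from \eqref{eq:Hx2} by restricting the flow to the invariant submanifold $M_0$. Proposition~\ref{prop:small2} guarantees that the time one flow $\rho^{B}_{k,\varepsilon}$ maps $M_0$ to itself, so the restriction makes sense and the formulas of Proposition~\ref{prop:Hs2} continue to hold verbatim for points of $M_0$. In particular, \eqref{eq:Hx2} reads $\tilde{x}_i = x_i\,(1+y_k^{\varepsilon})^{-\delta_{ki}}$. Now on $M_0$ we have the defining relations $d_ip_i - w_i = 0$, hence $d_ip_i = w_i$, and therefore $y_i = e^{d_ip_i+w_i} = e^{2w_i} = \hat{y}_i$ by \eqref{eq:y1} and \eqref{eq:yhat1}; this is precisely \eqref{eq:yyat1}. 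Substituting $y_k = \hat{y}_k$ into the restricted form of \eqref{eq:Hx2} yields $\tilde{x}_i = x_i\,(1+\hat{y}_k^{\varepsilon})^{-\delta_{ki}}$ on $M_0$, which is \eqref{eq:Hx3}.

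For the second assertion, I would simply compare \eqref{eq:Hx3} with \eqref{eq:auto4}, the nontropical part of the $x$-variable mutation without coefficients. The latter states $\tilde{x}_i = x_i\,(1+\hat{y}_k^{\varepsilon})^{-\delta_{ki}}$ with the $\hat{y}$-variables given by \eqref{eq:yhatred1}, which coincides with \eqref{eq:yhat1}. Since the $x$-variables on $M$ defined by \eqref{eq:x1} are positive real and the induced $\hat{y}_i = \prod_j x_j^{b_{ji}}$ are likewise positive real, the formulas agree term by term once one specializes the abstract $x$-variables of the cluster algebra to these concrete positive real functions on $M_0$. Thus the two transformations are literally the same under that specialization, and there is nothing further to check.

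I do not anticipate a genuine obstacle here: the proposition is essentially a bookkeeping consequence of Propositions~\ref{prop:Hs2} and~\ref{prop:small2} together with the identity \eqref{eq:yyat1}. The only point that deserves a word of care is that one must know the flow stays on $M_0$ before the substitution $y_k = \hat{y}_k$ is legitimate along the whole trajectory (not merely at the initial point); but this is exactly the content of Proposition~\ref{prop:small2}, which in turn follows from the coincidence of the right-hand sides of \eqref{eq:Hw1} and \eqref{eq:Hdp1}, forcing $\tfrac{d}{dt}(d_ip_i - w_i) = 0$. Hence the proof is short: invoke Proposition~\ref{prop:small2} to restrict, apply \eqref{eq:Hx2} and \eqref{eq:yyat1}, and match against \eqref{eq:auto4}.
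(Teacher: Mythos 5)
Your proof is correct and follows the same route as the paper: restrict to $M_0$ (using Proposition~\ref{prop:small2} that the flow preserves it), substitute $y_k=\hat{y}_k$ from \eqref{eq:yyat1} into \eqref{eq:Hx2}, and compare with \eqref{eq:auto4}. This is exactly the paper's (brief) argument, so there is nothing to add.
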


%\begin{rem}
%The map $M_0\rightarrow \mathbb{R}^n$ defined by $(u,p)\mapsto y$
%is surjective if and only if $B$ is invertible.
%\end{rem}

\subsection{Tropical transformation}
To complete the picture,
we also give a realization of the tropical transformations
\eqref{eq:trop1} and \eqref{eq:trop2} 
through a {\em change of coordinates} of the phase space $M$.
For any  $k\in \{1,\dots,n\}$
and a sign $\varepsilon=\pm$,
we consider the following  transformation:
\begin{align}
\label{eq:tau8}
\begin{matrix}
\tau^{B}_{k,\varepsilon}: &\mathbb{R}^{2n} &\rightarrow& \mathbb{R}^{2n}\\
&
({u},{p}) & \mapsto & (u',p')
\end{matrix}
\end{align}
\begin{align}
\label{eq:tau1}
u'_i
&=
\begin{cases}
\displaystyle
-{u}_k + \sum_{j=1}^n [-\varepsilon b_{jk}]_+ {u}_j
& i=k\\
{u}_i & i \neq k,
\end{cases}
\\
\label{eq:tau2}
p'_i
&=
\begin{cases}
-{p}_k 
& i=k\\
{p}_i +[-\varepsilon b_{ik}]_+ {p}_k & i \neq k.
\end{cases}
\end{align}

We call the transformation $\tau^{B}_{k,\varepsilon}$ a {\em tropical transformation}.
Note that it is an ordinary linear transformation,
not a {\em piecewise linear} transformation.

\begin{prop}
\label{prop:tau1}
Let $\tau^{B}_{k,\varepsilon}(u,p)=(u',p')$.
\par
(1) We have
\begin{align}
\label{eq:inner}
\sum_{i=1}^n u'_ip'_i
=
\sum_{i=1}^n u_ip_i.
\end{align}

\par
(2) The transformation $\tau^{B}_{k,\varepsilon}$ is canonical;
namely, we have
\begin{align}
\{p_i',u_j'\}=\delta_{ij},
\quad
\{u_i',u_j'\}=
\{p_i',p_j'\}=0.
\end{align}
\end{prop}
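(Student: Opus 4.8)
The plan is to establish (1) by a direct expansion of the left-hand side, and then to deduce (2) from (1) by a short linear-algebra argument exploiting the fact that $\tau^{B}_{k,\varepsilon}$ is linear.

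For part (1), I would separate $\sum_{i=1}^{n}u'_ip'_i$ into the $i=k$ term and the sum over $i\neq k$. Using \eqref{eq:tau1} and \eqref{eq:tau2}, the $i=k$ term equals $\bigl(-u_k+\sum_{j=1}^{n}[-\varepsilon b_{jk}]_+u_j\bigr)(-p_k)=u_kp_k-p_k\sum_{j=1}^{n}[-\varepsilon b_{jk}]_+u_j$, while the terms with $i\neq k$ contribute $\sum_{i\neq k}u_ip_i+p_k\sum_{i\neq k}[-\varepsilon b_{ik}]_+u_i$. Summing the two, the diagonal pieces recombine into $\sum_{i}u_ip_i$ and what remains is $-p_k[-\varepsilon b_{kk}]_+u_k$, which vanishes because skew-symmetrizability of $B$ forces $b_{kk}=0$. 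This proves \eqref{eq:inner}.

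For part (2), observe that $\tau^{B}_{k,\varepsilon}$ decouples the $u$- and $p$-coordinates: by \eqref{eq:tau1} and \eqref{eq:tau2} there are $n\times n$ matrices $A$ and $C$ with $u'=Au$ and $p'=Cp$. Since each $u'_i$ depends only on the $u$'s and each $p'_i$ only on the $p$'s, the brackets $\{u'_i,u'_j\}$ and $\{p'_i,p'_j\}$ vanish automatically by \eqref{eq:pu1}. For the mixed bracket, \eqref{eq:pu1} gives $\{p'_i,u'_j\}=\sum_{a}C_{ia}A_{ja}=(CA^{T})_{ij}$. On the other hand, part (1) reads $u^{T}A^{T}Cp=u^{T}p$ for all $u,p$, i.e. $A^{T}C=I$; since $A$ and $C$ are square, this also forces $CA^{T}=I$, whence $\{p'_i,u'_j\}=\delta_{ij}$, which completes (2). (Alternatively one may check $CA^{T}=I$ entrywise, again using $b_{kk}=0$.)

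The computation is entirely routine; the only point that needs attention is the bookkeeping of the distinguished index $k$ when reindexing the sums in (1), together with the easily overlooked fact that the diagonal entry $b_{kk}$ vanishes — this is precisely what makes the stray cross terms cancel. I do not anticipate a genuine obstacle.
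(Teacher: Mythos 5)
Your proof is correct and takes essentially the same route as the paper: the paper writes $u'=Mu$, $p'=Np$ and notes $N^{T}M=I$, from which both parts follow, while you verify the same transpose-inverse relation via the bilinear identity of part (1) (the entrywise bookkeeping, including $b_{kk}=0$, is exactly what the paper's unstated check of $N^{T}M=I$ amounts to) and then deduce part (2) from it. No gap; the reorganization of which statement comes first is immaterial.
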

\begin{proof}
We write the linear transformations
\eqref{eq:tau1} and \eqref{eq:tau2} in the matrix form
as $u'=Mu$ and $p'=Np$.
Then, $N^T M=I$ holds.
Both properties (1) and (2) follow from it.
\end{proof}

By Proposition \ref{prop:tau1} (2), one can introduce a new
global  Darboux chart $\varphi':M\buildrel \sim \over\rightarrow \mathbb{R}^{2n}$
with canonical coordinates $(u',p')$
by the following commutative diagram:
\begin{align}
\xymatrix{
& M \ar[ld]_{\varphi} \ar[rd]^{\varphi'}\\
\mathbb{R}^{2n}\ar[rr]^{\tau^{B}_{k,\varepsilon}} &&\mathbb{R}^{2n}
}
\end{align}

Let $B'=B_k$.
We employ a common skew-symmetrizer $D$ for $B$ and $B'$,
and we define primed variables
$w'_i$, $x'_i$, $y'_i$
for $(u',p')=\tau^{B}_{k,\varepsilon}(u,p)$,
\begin{align}
w'_i &= \prod_{j=1}^n b'_{ji} u'_j,\\
x'_i&= e^{2u'_i},\\
y'_i & = e^{d_i p'_i + w'_i}.
\end{align}

\begin{prop} 
\label{prop:tau2}
We have the following formulas:
\begin{align}
\label{eq:tau3}
w'_i
&=
\begin{cases}
-w_k 
& i=k\\
w_i +[\varepsilon b_{ki}]_+ w_k & i \neq k,
\end{cases}
\\
\label{eq:tau4}
d_i p'_i
&=
\begin{cases}
-d_k p_k 
& i=k\\
d_i p_i +[\varepsilon b_{ki}]_+ d_k p_k & i \neq k,
\end{cases}
\\
\label{eq:tau6}
x_i '&=
\begin{cases}
\displaystyle
x_k^{-1}  \bigg(\prod_{j=1}^n x_j^{[-\varepsilon b_{jk} ]_+}\bigg)& i=k
\\
 x_i 
& i\neq k,
\end{cases}
\\
\label{eq:tau5}
y_i '&=
\begin{cases}
y_k^{-1} & i=k
\\
 y_i y_k^{[\varepsilon b_{ki}]_+}
& i\neq k.
\end{cases}
\end{align}
In particular,
the transformations \eqref{eq:tau6}  and
 \eqref{eq:tau5} 
coincide with
the tropical part of 
the  mutation
of the $x$- and $y$-variables
in
\eqref{eq:trop1}
and
\eqref{eq:trop2},
respectively.
\end{prop}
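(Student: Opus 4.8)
The plan is to verify the four displayed formulas directly from the definitions, handling the cases $i=k$ and $i\neq k$ separately, and then deducing the $y$-variable formula from the $w$- and $p$-variable ones. Throughout I use that $b_{kk}=b'_{kk}=0$, which lets me freely insert or delete vanishing summands to make index sets match.

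First I would dispatch the $x$-variable formula \eqref{eq:tau6}: since $x'_i=e^{2u'_i}$, this is just the exponentiation of \eqref{eq:tau1}, the $j=k$ term of $\sum_j[-\varepsilon b_{jk}]_+u_j$ contributing nothing because $b_{kk}=0$. Likewise \eqref{eq:tau4} comes from multiplying \eqref{eq:tau2} by $d_i$ and using $d_i[-\varepsilon b_{ik}]_+=[-\varepsilon d_ib_{ik}]_+=[\varepsilon d_kb_{ki}]_+=d_k[\varepsilon b_{ki}]_+$, where the middle equality is the skew-symmetrizability relation $d_ib_{ik}=-d_kb_{ki}$ and the outer ones use $d_i,d_k>0$.

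The heart of the argument is \eqref{eq:tau3} for $w'_i=\sum_j b'_{ji}u'_j$. For $i=k$ one uses $b'_{kk}=0$, $b'_{jk}=-b_{jk}$ for $j\neq k$, and $u'_j=u_j$ for $j\neq k$, giving $w'_k=-\sum_j b_{jk}u_j=-w_k$. For $i\neq k$ one isolates the $j=k$ summand $b'_{ki}u'_k=-b_{ki}\bigl(-u_k+\sum_l[-\varepsilon b_{lk}]_+u_l\bigr)$ and expands the remaining summands through $b'_{ji}=b_{ji}+[-\varepsilon b_{jk}]_+b_{ki}+b_{jk}[\varepsilon b_{ki}]_+$. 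The two families of terms proportional to $b_{ki}[-\varepsilon b_{jk}]_+$ then cancel (using $b_{kk}=0$ to equate the index sets $\{l\}$ and $\{j\neq k\}$), leaving $w'_i=\sum_j b_{ji}u_j+[\varepsilon b_{ki}]_+\sum_j b_{jk}u_j=w_i+[\varepsilon b_{ki}]_+w_k$. I expect this cancellation to be the only step requiring genuine care; everything else is bookkeeping.

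Finally, \eqref{eq:tau5} follows by adding \eqref{eq:tau3} and \eqref{eq:tau4} and exponentiating: $y'_i=e^{d_ip'_i+w'_i}$ equals $e^{-(d_kp_k+w_k)}=y_k^{-1}$ when $i=k$, and $e^{d_ip_i+w_i}\,e^{[\varepsilon b_{ki}]_+(d_kp_k+w_k)}=y_iy_k^{[\varepsilon b_{ki}]_+}$ when $i\neq k$. Comparing \eqref{eq:tau6} and \eqref{eq:tau5} with \eqref{eq:trop1} and \eqref{eq:trop2} specialized to $\mathbb{P}=\mathbb{R}_+$ yields the final assertion and completes the proof.
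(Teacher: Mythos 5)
Your proof is correct and follows the same route as the paper, whose proof is only the remark that \eqref{eq:tau3} follows from \eqref{eq:Bmut1} together with \eqref{eq:tau1} and \eqref{eq:tau2}; you simply carry out that computation in detail (including the cancellation of the $b_{ki}[-\varepsilon b_{jk}]_+$ terms via $b_{kk}=0$) and add the routine steps for \eqref{eq:tau4}, \eqref{eq:tau6}, and \eqref{eq:tau5} by exponentiation and the skew-symmetrizer identity $d_ib_{ik}=-d_kb_{ki}$.
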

\begin{proof}
To prove \eqref{eq:tau3},
we use \eqref{eq:Bmut1} together with
\eqref{eq:tau1} and \eqref{eq:tau2}.
\end{proof}

\begin{prop}
In the new global Darboux chart $\varphi':M\buildrel \sim \over \rightarrow \mathbb{R}^{2n}$,
the small phase space $M_0$ is given by
\begin{align}
\label{eq:sp2}
M_0=\{ \varphi'{}^{-1}(u',p')\in M
\mid d_i p'_i - w'_i=0,\ (i=1,\dots,n)
\}.
\end{align}
\end{prop}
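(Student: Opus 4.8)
The plan is to verify that the two descriptions of $M_0$ agree pointwise by tracking how the single combination $d_ip_i-w_i$ transforms under the linear change of coordinates $\tau^B_{k,\varepsilon}$. First I would fix a point $m\in M$ and write $(u,p)=\varphi(m)$, so that $(u',p')=\varphi'(m)=\tau^B_{k,\varepsilon}(u,p)$ by the commutative diagram that defines $\varphi'$. Put $v_i:=d_ip_i-w_i$ and $v'_i:=d_ip'_i-w'_i$, where $w_i$ and $w'_i$ are the variables associated with $B$ and with $B'=B_k$ as defined in the excerpt. By \eqref{eq:sp1} the condition $m\in M_0$ reads $v_i=0$ for all $i$, and the assertion \eqref{eq:sp2} is exactly the claim that this is equivalent to $v'_i=0$ for all $i$.

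Next I would compute $v'$ in terms of $v$ using Proposition~\ref{prop:tau2}: subtracting the formula \eqref{eq:tau3} for $w'_i$ from the formula \eqref{eq:tau4} for $d_ip'_i$ gives
\begin{align*}
v'_i=
\begin{cases}
-v_k & i=k,\\
v_i+[\varepsilon b_{ki}]_+\,v_k & i\neq k.
\end{cases}
\end{align*}
Hence $v'=Pv$, where $P$ is the $n\times n$ matrix with $P_{kk}=-1$, with $P_{ii}=1$ and $P_{ik}=[\varepsilon b_{ki}]_+$ for $i\neq k$, and all remaining entries zero. A one-line check shows $P^2=I$ (as it must, given the involutivity \eqref{eq:inv1} of $\mu_k$), so $\det P=-1\neq 0$ and $P$ is invertible. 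Therefore $v=0$ if and only if $v'=0$, which is the desired equivalence, and \eqref{eq:sp2} follows.

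I do not expect a genuine obstacle here: the whole content is the observation that $d_ip_i-w_i$ transforms linearly and invertibly under $\tau^B_{k,\varepsilon}$. The two small points that need care are (i) that the primed quantities $w'_i,x'_i,y'_i$ attached to the chart $\varphi'$ are, by construction, the variables associated with the mutated matrix $B'$, so that Proposition~\ref{prop:tau2} applies verbatim; and (ii) the invertibility of $P$, which can also be seen conceptually: by \eqref{eq:yyat1} the locus $\{v=0\}$ equals $\{\,y_i=\hat y_i\text{ for all }i\,\}$, and both $y_i$ and $\hat y_i$ transform under $\tau^B_{k,\varepsilon}$ by the tropical part of the $y$-mutation (using that $\hat y$ mutates as $y$, property~(4) in Section~\ref{subsec:mut1}), so this locus is intrinsically defined and in particular its image is again of the same form.
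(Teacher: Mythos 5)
Your proof is correct and follows essentially the same route as the paper: the paper's argument is precisely that \eqref{eq:sp2} follows from \eqref{eq:tau3} and \eqref{eq:tau4}, and you have simply made explicit the resulting linear, invertible transformation of the constraint functions $d_ip_i-w_i$. No issues.
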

\begin{proof}
This follows from 
 \eqref{eq:tau3} and \eqref{eq:tau4}.
\end{proof}

\subsection{Signed mutations}

Let us introduce a composition of maps
\begin{align}
\label{eq:sm1}
\begin{matrix}
\mu^{B}_{k,\varepsilon} 
=
\tau^{B}_{k,\varepsilon} \circ \rho^{B}_{k,\varepsilon}: & \mathbb{R}^{2n}
&\rightarrow &\mathbb{R}^{2n}\\
& (u,p) & \mapsto & ({u}',{p}').
\end{matrix}
\end{align}

Summarizing
Propositions \ref{prop:Hs2},
\ref{prop:small1},
and \ref{prop:tau2},
 we have the following conclusion.
\begin{thm} 
\label{thm:Hs3}
Let $\mu^{B}_{k,\varepsilon} (u,p)=(u',p')$.
Then, we have the following formulas:
\begin{align}
\label{eq:Mu3}
{u}'_i&=
\begin{cases}
\displaystyle
-u_k + \sum_{j=1}^n [-\varepsilon b_{jk}]_+ u_j
+\frac{1}{2}
  \log(1+y_k^{\varepsilon})
& i=k\\
u_i & i \neq k,
\end{cases}
\\
\label{eq:Mp3}
 {p}'_i&=
\begin{cases}
- p_k 
& i=k\\
\displaystyle
p_i +[-\varepsilon b_{ik}]_+  p_k
-\frac{1}{2d_i}b_{ki} \log (1+y_k^{\varepsilon}) & i \neq k,
\end{cases}
\\
\label{eq:Mw3}
{w}'_i&=
\begin{cases}
-w_k 
& i=k\\
\displaystyle
w_i +[\varepsilon b_{ki}]_+ w_k
-\frac{1}{2}b_{ki} \log (1+y_k^{\varepsilon}),
 & i \neq k,
\end{cases}
\\
\label{eq:Mdp3}
d_i {p}'_i&=
\begin{cases}
-d_k p_k 
& i=k\\
\displaystyle
d_i p_i +[\varepsilon b_{ki}]_+ d_k p_k
-\frac{1}{2}b_{ki} \log (1+y_k^{\varepsilon}) & i \neq k,
\end{cases}
\\
\label{eq:Mx3}
{x}'_i&=
\begin{cases}
\displaystyle
x_k^{-1} \bigg(\prod_{j=1}^n x_j^{[-
\varepsilon b_{jk}]_+}\bigg)(1+{y}_k^{\varepsilon}) &i=k\\
x_i
& i\neq k,\\
\end{cases}
\\
\label{eq:My3}
{y}'_i&=
\begin{cases}
y_k^{-1} &i=k\\
y_i y_k^{[\varepsilon b_{ki}]_+}(1+y_k^{\varepsilon})^{-b_{ki}}
& i\neq k.\\
\end{cases}
\end{align}
In particular,
the transformation 
 \eqref{eq:My3} 
coincides with
the  mutation of the $y$-variables
in
\eqref{eq:ymut1}
with $\mathbb{P}=\mathbb{R}_+$,
while
the transformation 
 \eqref{eq:Mx3} restricted to $M_0$ 
coincides with
the  mutation of the $x$-variables without coefficients
in
\eqref{eq:xmut2}
under the specialization of $x$-variables in $\mathbb{R}_+$.
\end{thm}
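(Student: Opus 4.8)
The plan is to derive \eqref{eq:Mu3}--\eqref{eq:My3} directly from the definition \eqref{eq:sm1} of $\mu^{B}_{k,\varepsilon}$ as the composite $\tau^{B}_{k,\varepsilon}\circ\rho^{B}_{k,\varepsilon}$, by substituting the explicit output of $\rho^{B}_{k,\varepsilon}$ from Proposition \ref{prop:Hs2} into the explicit formulas for $\tau^{B}_{k,\varepsilon}$ from Proposition \ref{prop:tau2}. So the first step is to fix an arbitrary $(u,p)$, set $(\tilde u,\tilde p)=\rho^{B}_{k,\varepsilon}(u,p)$, and record from Proposition \ref{prop:Hs2} the values of $\tilde u_i,\tilde p_i$ together with the associated variables $\tilde w_i,\tilde x_i,\tilde y_i$. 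Because $b_{kk}=0$, these simplify at the index $i=k$: one has $\tilde u_k=u_k-\frac12\log(1+y_k^{\varepsilon})$ but $\tilde p_k=p_k$, $\tilde w_k=w_k$, $\tilde x_k=x_k(1+y_k^{\varepsilon})^{-1}$ and $\tilde y_k=y_k$, a fact used repeatedly below.

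The second step is to apply $\tau^{B}_{k,\varepsilon}$ to $(\tilde u,\tilde p)$ and invoke Proposition \ref{prop:tau2} \emph{with the barred data in place of the unbarred data}: this produces $(u',p')$ via \eqref{eq:tau1}--\eqref{eq:tau2} and the variables $w'_i,x'_i,y'_i$ (defined with respect to $B'=B_k$) via \eqref{eq:tau3}--\eqref{eq:tau5}, all expressed through $\tilde u,\tilde p,\tilde w_i,\tilde x_i,\tilde y_i$. Substituting the first-step formulas and simplifying yields the claim: for instance $u'_k=-\tilde u_k+\sum_j[-\varepsilon b_{jk}]_+\tilde u_j$, where the $j=k$ term vanishes since $[-\varepsilon b_{kk}]_+=0$, the remaining terms have $\tilde u_j=u_j$, and $-\tilde u_k=-u_k+\frac12\log(1+y_k^{\varepsilon})$, which is precisely \eqref{eq:Mu3}. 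The computations for $p'_i,w'_i,d_ip'_i,x'_i,y'_i$ are of the same nature: each is a one- or two-term expression in the barred variables in which the $k$-th barred variable is unchanged (because $b_{kk}=0$) while the non-$k$ barred variables carry the $\rho$-correction, so that after substitution one reads off \eqref{eq:Mp3}--\eqref{eq:My3} term by term.

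The step most prone to error — though not a genuine obstacle — is this second one, where one must keep straight that the barred variables $\tilde w_i,\tilde y_i$ are built from the original matrix $B$ whereas, after $\tau^{B}_{k,\varepsilon}$, the primed variables $w'_i,y'_i$ are built from $B'=B_k$; Proposition \ref{prop:tau2} (in particular \eqref{eq:tau3}) is exactly what reconciles the two descriptions, so the composition goes through cleanly. Finally, for the two ``in particular'' assertions: comparing \eqref{eq:My3} with \eqref{eq:ymut1} specialized to $\mathbb{P}=\mathbb{R}_+$, where $1\oplus y_k^{\varepsilon}=1+y_k^{\varepsilon}$, gives the statement for the $y$-variables; and since the tropical part \eqref{eq:tau6} of $\tau^{B}_{k,\varepsilon}$ involves no $y$-variable, combining it with Proposition \ref{prop:small1} and the identity $y_k=\hat y_k$ on $M_0$ from \eqref{eq:yyat1} turns \eqref{eq:Mx3} into the mutation \eqref{eq:xmut2} of the $x$-variables without coefficients on $M_0$ — this being meaningful because $M_0$ is preserved by the flow, by Proposition \ref{prop:small2}.
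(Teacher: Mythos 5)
Your proposal is correct and follows exactly the paper's route: the theorem is obtained by composing the time-one flow formulas of Proposition \ref{prop:Hs2} with the tropical transformation formulas of Proposition \ref{prop:tau2} (using $b_{kk}=0$ to simplify at $i=k$), with Proposition \ref{prop:small1} and \eqref{eq:yyat1} handling the $x$-variable statement on $M_0$. Your explicit substitutions and the bookkeeping of which variables are built from $B$ versus $B'=B_k$ are accurate, so nothing is missing.
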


As a corollary of
Theorem \ref{thm:Hs3},
$x'$ on $M_0$ and $y'$
therein
 do not depend on the choice of
the sign $\varepsilon$.
However,
$u'$ and $p'$
{\em do} depend on $\varepsilon$.
Thus, we call the map $\mu^{B}_{k,\varepsilon}$
in \eqref{eq:sm1}
the {\em signed mutation at $k$ by $B$ with sign $\varepsilon$}.

The inversion relation \eqref{eq:inv2} of the unsigned mutation
$\mu^{B}_{k}$ is replaced with the following one:
\begin{prop}
Define
$\rho^{B}_{k,\varepsilon}$
and $\rho^{B_k}_{k,-\varepsilon}$
by a common skew-symmetrizer $D$ of
$B$ and $B_k$.
Then, the following inversion relation holds:
\begin{align}
\label{eq:inv3}
\mu_{k,-\varepsilon}^{B_k}\circ \mu_{k,\varepsilon}^B
= \mathrm{id}.
\end{align}
\end{prop}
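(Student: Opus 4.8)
The plan is to exploit the factorization $\mu^{B}_{k,\varepsilon}=\tau^{B}_{k,\varepsilon}\circ\rho^{B}_{k,\varepsilon}$ of \eqref{eq:sm1} and to split \eqref{eq:inv3} into two independent statements: (i) the tropical transformations satisfy $\tau^{B_k}_{k,-\varepsilon}\circ\tau^{B}_{k,\varepsilon}=\mathrm{id}$; and (ii) the nontropical parts are mutually inverse once conjugated by the intervening tropical transformation, i.e.\ $\rho^{B_k}_{k,-\varepsilon}=\tau^{B}_{k,\varepsilon}\circ(\rho^{B}_{k,\varepsilon})^{-1}\circ(\tau^{B}_{k,\varepsilon})^{-1}$. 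Granting (i) and (ii), the proposition is immediate:
\begin{align}
\mu^{B_k}_{k,-\varepsilon}\circ\mu^{B}_{k,\varepsilon}
&=\tau^{B_k}_{k,-\varepsilon}\circ\rho^{B_k}_{k,-\varepsilon}\circ\tau^{B}_{k,\varepsilon}\circ\rho^{B}_{k,\varepsilon}\\
&=\tau^{B_k}_{k,-\varepsilon}\circ\tau^{B}_{k,\varepsilon}\circ(\rho^{B}_{k,\varepsilon})^{-1}\circ(\tau^{B}_{k,\varepsilon})^{-1}\circ\tau^{B}_{k,\varepsilon}\circ\rho^{B}_{k,\varepsilon}
=\tau^{B_k}_{k,-\varepsilon}\circ\tau^{B}_{k,\varepsilon}=\mathrm{id}.
\end{align}

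For (i) I would observe that, as linear maps of $\mathbb{R}^{2n}$, the transformations $\tau^{B}_{k,\varepsilon}$ and $\tau^{B_k}_{k,-\varepsilon}$ are in fact equal: by \eqref{eq:tau1}--\eqref{eq:tau2} they depend only on the $k$-th column of the defining matrix and on the chosen sign, and passing from $(B,\varepsilon)$ to $(B_k,-\varepsilon)$ replaces $b_{jk}$ by $-b_{jk}$ and $\varepsilon$ by $-\varepsilon$, leaving every coefficient $[-\varepsilon b_{jk}]_+$ unchanged. It then remains to check that $\tau^{B}_{k,\varepsilon}$ is an involution, which is clear from \eqref{eq:tau1}--\eqref{eq:tau2}: the $k$-th coordinates get negated twice and the shift terms cancel because $b_{kk}=0$ (equivalently, in the matrix notation $u'=Mu$, $p'=Np$ from the proof of Proposition~\ref{prop:tau1}, one checks $M^2=N^2=I$).

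For (ii) I would invoke two facts: $\tau^{B}_{k,\varepsilon}$ is a symplectomorphism (Proposition~\ref{prop:tau1}(2)), and conjugating a Hamiltonian flow by a symplectomorphism yields the flow generated by the pulled-back Hamiltonian, while reversing the sign of a Hamiltonian reverses the direction of its flow. Hence it suffices to identify the functions $H^{B_k}_{k,-\varepsilon}\circ\tau^{B}_{k,\varepsilon}$ and $-H^{B}_{k,\varepsilon}$ on $\mathbb{R}^{2n}$. By \eqref{eq:tau5} with $i=k$, the $y_k$-variable attached to $B_k$, evaluated at $\tau^{B}_{k,\varepsilon}(u,p)$, equals $y_k^{-1}$; since $D$ is a common skew-symmetrizer of $B$ and $B_k$, the prefactor $\frac{1}{2d_k}$ in \eqref{eq:Hk1} is the same for both, so
\begin{align}
H^{B_k}_{k,-\varepsilon}\circ\tau^{B}_{k,\varepsilon}
=\frac{-\varepsilon}{2d_k}\,\mathrm{Li}_2\big(-(y_k^{-1})^{-\varepsilon}\big)
=\frac{-\varepsilon}{2d_k}\,\mathrm{Li}_2\big(-y_k^{\varepsilon}\big)
=-H^{B}_{k,\varepsilon}.
\end{align}
Running the time-one flow of $-H^{B}_{k,\varepsilon}$ is running the time-one flow of $H^{B}_{k,\varepsilon}$ backwards, i.e.\ applying $(\rho^{B}_{k,\varepsilon})^{-1}$; conjugating by $\tau^{B}_{k,\varepsilon}$ gives (ii).

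The step demanding most care is (ii): one must keep straight the direction of the flow, and the fact that the $y$-variable controlling the second signed mutation is $y_k^{-1}$, not $y_k$. If one prefers to bypass the conjugation argument, there is an equivalent but longer route: substitute the explicit formulas of Theorem~\ref{thm:Hs3} for $\mu^{B}_{k,\varepsilon}(u,p)=(u',p')$ into those for $\mu^{B_k}_{k,-\varepsilon}(u',p')$ and verify $(u'',p'')=(u,p)$ coordinatewise, treating $i=k$ and $i\neq k$ separately; the tropical shifts cancel exactly as in the unsigned inversion \eqref{eq:inv2}, and the nontropical corrections cancel because $b'_{ki}=-b_{ki}$ and $(y'_k)^{-\varepsilon}=y_k^{\varepsilon}$.
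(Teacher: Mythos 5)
Your proof is correct, but it takes a genuinely different route from the paper's, which disposes of the proposition by directly verifying the composite coordinatewise from the explicit formulas \eqref{eq:Mu3} and \eqref{eq:Mp3} of Theorem~\ref{thm:Hs3} (essentially the ``longer route'' you sketch in your last sentence). Your argument instead works with the decomposition \eqref{eq:sm1}: you note that $\tau^{B_k}_{k,-\varepsilon}=\tau^{B}_{k,\varepsilon}$ because mutation at $k$ negates the $k$-th column, so the coefficients $[-\varepsilon b_{jk}]_+$ are unchanged, and that this linear map is an involution since $b_{kk}=0$; you then identify $H^{B_k}_{k,-\varepsilon}\circ\tau^{B}_{k,\varepsilon}=-H^{B}_{k,\varepsilon}$ via \eqref{eq:tau5} with $i=k$ together with the common skew-symmetrizer hypothesis (which fixes the prefactor $\tfrac{1}{2d_k}$ in \eqref{eq:Hk1}), so that $\rho^{B_k}_{k,-\varepsilon}=\tau^{B}_{k,\varepsilon}\circ(\rho^{B}_{k,\varepsilon})^{-1}\circ(\tau^{B}_{k,\varepsilon})^{-1}$, the canonicality of $\tau^{B}_{k,\varepsilon}$ from Proposition~\ref{prop:tau1}(2) being exactly what licenses conjugating the Hamiltonian flow and the sign flip of the Hamiltonian being what reverses it. The paper's check buys brevity, since Theorem~\ref{thm:Hs3} is already in hand; your version buys an explanation of why the inversion holds --- the second signed mutation runs the first Hamiltonian flow backwards in the mutated chart --- it actually proves the stronger conjugation identity for the flows at all times (not just the time-one maps), and it makes visible where the common-skew-symmetrizer hypothesis is used.
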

\begin{proof}
One can directly verify it by
\eqref{eq:Mu3} and \eqref{eq:Mp3}.
\end{proof}

\subsection{Canonical quantization}
\label{subsec:canonical}
One can canonically quantize the Poisson brackets
in \eqref{eq:pu1} 
by replacing them
 with the canonical commutation relations,
 \begin{align}
\label{eq:pu2}
[ P_i,U_j]=\frac{\hbar}{\sqrt{-1}}\delta_{ij},
\quad
[ U_i,U_j]=[ P_i,P_j]=0.
\end{align}
Then, we have
\begin{align}
\label{eq:dpw1}
[d_i P_i + W_i, d_j P_j + W_j]
= \frac{2\hbar}{\sqrt{-1}}d_i b_{ij}
=2\hbar \sqrt{-1} d_j b_{ji}.
\end{align}
Let us set
\begin{align}
q=e^{\hbar\sqrt{-1}}.
\end{align}
We recall a special case of the Baker-Campbell-Hausdorff formula.
For any noncommutative variables $A$ and $B$  such that
$[A,B]=C$ and $[C,A]=[C,B]=0$, we have
\begin{align}
e^A e^B = e^{C/2} e^{A+B},
\end{align}
or
\begin{align}
e^A e^B = e^{C} e^{B}e^A.
\end{align}
Applying it for \eqref{eq:dpw1},
we have the commutation relation
for $Y_i=e^{d_i P_i + W_i}$,
\begin{align}
Y_i Y_j = q^{2d_j b_{ji}} Y_j Y_i.
\end{align}
This coincides with the quantization of $y$-variables due to
Fock and Goncharov \cite{Fock03,Fock07}.

\begin{rem}
The realization of quantum $y$-variables by the canonical variables
 presented here appeared
in \cite{Fock07b,Kashaev11,Nakanishi16}.
In fact, the construction of $x$- and $y$-variables in 
\eqref{eq:x1} and \eqref{eq:y1} is deduced from the  quantum ones in \cite{Kashaev11,Nakanishi16}.
\end{rem}

\subsection{Quantization of $x$-variables through Dirac bracket}
\label{subsec:quantization}
Since $x$-variables are in involution for
the Poisson bracket \eqref{eq:lc1},
the canonical quantization in \eqref{eq:pu2}
 only provides the trivial quantization for them.
However, 
by Proposition \ref{prop:small1},
they should be restricted  to the small phase space $M_0$
to be identified with the $x$-variables in a seed.
Therefore, we should apply Dirac's method \cite{Dirac50}
 to obtain the Poisson structure on $M_0$.

Recall that the space $M_0$ is given by a family of constraints
 $\chi_i=0$ ($i=1,\dots,n$),
 where
\begin{align}
\chi_i=d_i p_i - w_i.
\end{align}
Following \cite{Dirac50},
let us consider the $n\times n$ matrix $A=(a_{ij})_{i,j=1}^n$
defined by
\begin{align}
a_{ij}:=\{\chi_i,\chi_j\}=-2d_i b_{ij}.
\end{align}
To proceed, we have to assume that
the matrix $A=-2DB$ is invertible,
or equivalently,
$B$ is invertible.
Then, we have $A^{-1}=-(1/2)B^{-1} D^{-1}$.

%\bigskip
%\par
%{\em (a). The case when $B$ is invertible.}

\begin{lem} The matrix $B^{-1}$ is skew-symmetrizable with $D$ being its
skew-symmetrizer.
\end{lem}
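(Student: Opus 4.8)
The plan is to reduce the statement to a one‑line matrix manipulation. Recall that the hypothesis ``$D$ is a skew-symmetrizer of $B$'' means precisely $d_ib_{ij}=-d_jb_{ji}$ for all $i,j$, which in matrix form says that $DB$ is skew-symmetric, i.e. $(DB)^T=-DB$. Since $D^T=D$, this is the same as $B^TD=-DB$. I would take this as the starting identity.

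First I would invert. Under the running assumption that $B$ (hence $DB$) is invertible, the inverse of the skew-symmetric matrix $DB$ is again skew-symmetric: if $S^T=-S$ then $(S^{-1})^T=(S^T)^{-1}=-S^{-1}$. Applying this to $S=DB$, and using $(DB)^{-1}=B^{-1}D^{-1}$, gives that $B^{-1}D^{-1}$ is skew-symmetric. Next I would conjugate by $D$: writing $DB^{-1}=D\,(B^{-1}D^{-1})\,D$ and using $D^T=D$ once more,
\begin{align}
(DB^{-1})^T = D\,(B^{-1}D^{-1})^T\,D = -D\,(B^{-1}D^{-1})\,D = -DB^{-1},
\end{align}
so $DB^{-1}$ is skew-symmetric, which is exactly the assertion that $D$ is a skew-symmetrizer of $B^{-1}$, i.e. $d_i(B^{-1})_{ij}=-d_j(B^{-1})_{ji}$. (Alternatively, one can argue directly: $(DB^{-1})^T=(B^{-1})^TD=(B^T)^{-1}D$, and from $B^TD=-DB$ one gets $B^T=-DBD^{-1}$, hence $(B^T)^{-1}=-DB^{-1}D^{-1}$, so $(DB^{-1})^T=-DB^{-1}D^{-1}D=-DB^{-1}$.)

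I do not anticipate a genuine obstacle here; the computation is routine linear algebra. The only point worth flagging is that $B^{-1}$ need not be an integer matrix, so the word ``skew-symmetrizable'' in the statement must be read in the broadened sense (a rational matrix $C$ such that $DC$ is skew-symmetric) rather than the integral notion from Section~2; this is harmless because the lemma is used only through the skew-symmetry of $DB^{-1}$, equivalently of $A^{-1}=-(1/2)B^{-1}D^{-1}$, when setting up the Dirac bracket.
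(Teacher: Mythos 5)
Your proof is correct and takes essentially the same route as the paper, which simply inverts the relation $DBD^{-1}=-B^{T}$ to obtain $DB^{-1}D^{-1}=-(B^{-1})^{T}$ --- exactly your parenthetical alternative; your main computation via the skew-symmetry of $(DB)^{-1}=B^{-1}D^{-1}$ is just a reorganization of that one-line inversion. Your remark that $B^{-1}$ is only rational, so ``skew-symmetrizable'' must be read in the broadened sense, is a fair observation and harmless for how the lemma is used.
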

\begin{proof}
By assumption, we have $DB D^{-1}=-B^T$.
Taking its inverse, we have
 $DB^{-1} D^{-1}=-(B^{-1})^T$.
\end{proof}

The {\em Dirac bracket} is defined by
\begin{align}
\label{eq:Dbra1}
\{f,g\}_D
:=\{f,g\} - \sum_{i,j=1}^n \{f,\chi_i\} (A^{-1})_{ij}\{\chi_j, g\},
\end{align}
where  $(A^{-1})_{ij}$ is the $(i,j)$-component of $A^{-1}$.

Here are some basic properties of the Dirac bracket.

\par
(1) It defines a new Poisson bracket on $M$.
\par
(2) For any constraint function $\chi_i$ and any function $f$ on $M$,
\begin{align}
\{f,\chi_i \}_D = 0
\end{align}
holds.
Thus, for  any function $g$ on $M$,
$\{f,g\chi_i \}_D=\{f,g \}_D\chi_i$ vanishes on $M_0$.
As a consequence, it defines a Poisson bracket on $M_0$.
\par
(3)
For any function $f$ on $M$,
\begin{align}
\{H^{B}_{k,\varepsilon}, f\}_D
=
\{H^{B}_{k,\varepsilon}, f\},
\end{align}
since $\{H^{B}_{k,\varepsilon}, \chi_i\}=0$
as stated in Proposition \ref{prop:small2}.
Therefore, the equations of  motion do not change.

It is convenient to set
 $B^{-1}=\Omega=(\omega_{ij})_{i,j=1}^n$.
\begin{prop} 
We have the following formulas:
\begin{align}
\label{eq:pu3}
\{ p_i,u_j\}_D=\frac{1}{2}\delta_{ij},
\quad
&\{ u_i,u_j\}_D=-\frac{1}{2} d_i \omega_{ij},
\quad
\{ p_i,p_j\}_D=\frac{1}{2d_j}b_{ij},
\\
\label{eq:lc3}
\{x_i,x_j\}_D=-2d_i\omega_{ij}x_ix_j,
\quad
&\{\hat{y}_i,\hat{y}_j\}_D=2d_i b_{ij}\hat{y}_i \hat{y}_j,
\quad
\{\hat{y}_i,x_j\}_D=2 d_i \delta_{ij} \hat{y}_i x_j,
\end{align}
where $\hat{y}_i$ is defined by \eqref{eq:yhat1}.
\end{prop}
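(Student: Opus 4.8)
The plan is to compute all six Dirac brackets directly from the definition \eqref{eq:Dbra1}, using the already-established ordinary Poisson brackets and the explicit form of the constraints $\chi_i = d_i p_i - w_i$. The key preliminary observation is that the constraint functions $\chi_i$ are linear in the canonical coordinates, so their brackets with $p_j$, $u_j$, $w_j$ are all constants; this makes every correction term in \eqref{eq:Dbra1} a bilinear expression with constant coefficients, reducing the whole proposition to linear algebra over the matrix $A^{-1} = -\tfrac12 B^{-1}D^{-1} = -\tfrac12 \Omega D^{-1}$.

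First I would record the elementary brackets $\{\chi_i, p_j\}$, $\{\chi_i, u_j\}$, and $\{\chi_i, w_j\}$. Since $\chi_i = d_i p_i - w_i$ and $w_i = \sum_\ell b_{\ell i} u_\ell$, we get $\{\chi_i, u_j\} = \{d_i p_i, u_j\} = d_i \delta_{ij}$, $\{\chi_i, p_j\} = -\{w_i, p_j\} = \{p_j, w_i\}$, and $\{p_j, w_i\} = b_{ji}$ by \eqref{eq:pu1}; also $\{\chi_i, \chi_j\} = a_{ij} = -2 d_i b_{ij}$ as given. Next I would plug $f,g \in \{p_i, u_i\}$ into \eqref{eq:Dbra1}. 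For instance, $\{p_i, u_j\}_D = \delta_{ij} - \sum_{k,\ell}\{p_i, \chi_k\}(A^{-1})_{k\ell}\{\chi_\ell, u_j\}$; substituting $\{p_i,\chi_k\} = -b_{ik}$, $\{\chi_\ell, u_j\} = d_\ell \delta_{\ell j} = d_j\delta_{\ell j}$, and $(A^{-1})_{k\ell} = -\tfrac12 \omega_{k m}(D^{-1})_{m\ell}$ turns the sum into $\tfrac12 \sum_k b_{ik}\omega_{kj} = \tfrac12 \delta_{ij}$ since $\Omega = B^{-1}$, giving $\{p_i,u_j\}_D = \tfrac12\delta_{ij}$. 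The brackets $\{u_i,u_j\}_D$ and $\{p_i,p_j\}_D$ are handled the same way: in the first, $\{u_i,\chi_k\} = -d_k\delta_{ik}$ makes the correction $-\tfrac12 d_i\omega_{ij}$ (the ordinary bracket being zero); in the second, using $\{p_i,\chi_k\} = -b_{ik}$ twice and the skew-symmetrizability identity $d_m b_{mi} = -d_i b_{im}$ collapses the double sum to $\tfrac1{2 d_j} b_{ij}$.

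Having \eqref{eq:pu3}, the log-canonical brackets \eqref{eq:lc3} follow by the chain-rule property \eqref{eq:pbf1}/\eqref{eq:exp1}, exactly as in the proof of the earlier Proposition on ordinary brackets, together with property (3) of the Dirac bracket which records that $\{H, \chi_i\} = 0$ is not needed here but that the Dirac bracket is a derivation. Concretely, $x_i = e^{2u_i}$ gives $\{x_i,x_j\}_D = 4 x_i x_j \{u_i,u_j\}_D = -2 d_i \omega_{ij} x_i x_j$; and $\hat{y}_i = e^{2 w_i}$ with $w_i$ linear in the $u$'s gives $\{\hat{y}_i,\hat{y}_j\}_D = 4\hat{y}_i\hat{y}_j\sum_{k,\ell} b_{ki}b_{\ell j}\{u_k,u_\ell\}_D$, which after substituting $\{u_k,u_\ell\}_D = -\tfrac12 d_k\omega_{k\ell}$ and using $\sum_k b_{ki} d_k \omega_{k\ell} = \sum_k b_{ki} d_k (B^{-1})_{k\ell}$ — simplified via $d_k b_{ki} = -d_i b_{ik}$ to $-d_i\sum_k b_{ik}(B^{-1})_{k\ell} = -d_i\delta_{i\ell}$ — yields $2 d_i b_{ij}\hat{y}_i\hat{y}_j$. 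The mixed bracket $\{\hat{y}_i, x_j\}_D = 4\hat{y}_i x_j \sum_k b_{ki}\{u_k, u_j\}_D = 4\hat{y}_i x_j \cdot(-\tfrac12)\sum_k b_{ki} d_k\omega_{kj} = 2 d_i\delta_{ij}\hat{y}_i x_j$ by the same contraction.

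The only genuine subtlety — the main obstacle — is bookkeeping the index gymnastics so that the asymmetric skew-symmetrizer $D$ lands on the correct side: the raw Dirac correction produces sums like $\sum_k b_{ki} d_k \omega_{k\ell}$, and one must repeatedly invoke $d_k b_{k i} = -d_i b_{i k}$ (equivalently $DB = -B^T D$, whence $D\Omega = -\Omega^T D$, i.e. the earlier Lemma that $B^{-1}$ is skew-symmetrizable by $D$) to rewrite these as multiples of $\delta$-symbols or of $d_i\omega_{ij}$. Once the identity $\sum_k b_{ik}\omega_{k\ell} = \delta_{i\ell}$ and its $D$-twisted companion are in hand, every line is a one-step contraction and no further ideas are required; I would simply present the $\{p_i,u_j\}_D$ and $\{u_i,u_j\}_D$ computations in full and state that the remaining cases are entirely analogous.
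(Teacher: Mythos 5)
Your proposal is correct and follows essentially the same route as the paper: an explicit computation of \eqref{eq:pu3} from the definition \eqref{eq:Dbra1} (using the constancy of the brackets of the constraints $\chi_i$ with $u_j,p_j$ and the identities $B\Omega=I$, $d_ib_{ij}=-d_jb_{ji}$), followed by the exponential formula \eqref{eq:exp1} to obtain \eqref{eq:lc3}. The only harmless difference is that the paper substitutes $\hat{y}_i=e^{2d_ip_i}$ on $M_0$ and contracts through $\{p_i,p_j\}_D$ and $\{p_i,u_j\}_D$, while you keep $\hat{y}_i=e^{2w_i}$ and contract through $\{u_i,u_j\}_D$; both yield the same constants, and your variant in fact verifies \eqref{eq:lc3} on all of $M$.
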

\begin{proof}
The formulas in \eqref{eq:pu3} are obtained by  explicit calculations
from the definition \eqref{eq:Dbra1}.
Note that $\hat{y}_i=e^{2d_i p_i}$ on $M_0$.
Then, we apply \eqref{eq:exp1} to obtain \eqref{eq:lc3}.
\end{proof}

In particular, the Dirac brackets in \eqref{eq:lc3} for $x$- and $\hat{y}$-variables
 coincide with the
Poisson brackets in \cite{Gekhtman02}.

Now we ``canonically quantize" the Dirac brackets
in \eqref{eq:pu3}
 by replacing them
 with the  commutation relations,
 \begin{align}
\label{eq:pu4}
[ P_i,U_j]=\frac{\hbar}{\sqrt{-1}}\frac{1}{2}\delta_{ij},
\quad
[ U_i,U_j]=
\frac{\hbar}{\sqrt{-1}}\frac{-1}{2}d_i \omega_{ij},
\quad
[ P_i,P_j]=
\frac{\hbar}{\sqrt{-1}}\frac{1}{2d_j}b_{ij}.
\end{align}

Then, in the same manner as in the previous subsection,
 we obtain 
the following commutation relation  for $X_i=e^{2U_i}$,
\begin{align}
X_i X_j = q^{2d_i \omega_{ij}} X_j X_i.
\end{align}
This coincides with the quantization of $x$-variables (without coefficients) due to
Berenstein and Zelevinsky \cite{Berenstein05b},
where the skew-symmetric matrix $\Lambda$ therein is related to
$\Omega$ via
$\Lambda^T=D\Omega$, and $q$ therein is identified with $q^{-2}$ here.

\section{Lagrangian  formalism and Rogers dilogarithm}

\subsection{Legendre transformation}
Let us recall some basic facts on the Legendre transformation of
a  Hamiltonian.
See, for example, \cite{Abraham78,Takhtajan08}
for more information.

For simplicity, let us consider a Hamiltonian $H$
on the space $\mathbb{R}^{2n}$ with the canonical coordinates $(u,p)$.
The  space $\mathbb{R}^{2n}$ is  naturally identified with the cotangent bundle 
$\pi: T^*\mathbb{R}^n\rightarrow \mathbb{R}^n $,
where $\pi(u,p)=u$ and $p=(p_i)_{i=1}^n$ represents the
1-form $\sum_{i=1}^n p_i du_i$.
Then, the Hamiltonian $H$ induces the following
fiber preserving map:
\begin{align}
\label{eq:reg1}
\begin{matrix}
F_H:&T^*\mathbb{R}^n&  \rightarrow & T\mathbb{R}^n\\
& (u,p) & \mapsto & (u,\dot{u}).
\end{matrix}
\end{align}
\begin{defn}
We say that a Hamiltonian $H(u,p)$ is {\em regular} if the map $F_H$ is a diffeomorphism.
\end{defn}

The {\em Lagrangian} $\mathscr{L}$ for the Hamiltonian $H$ is formally defined 
by the  {\em Legendre transformation},
\begin{align}
\label{eq:lag1}
\mathscr{L}= \sum_{i=1}^n \dot{u}_i p_i - H.
\end{align}
Here we use the symbol $\mathscr{L}$ so that it is not confused
with the Rogers dilogarithm $L(x)$ or $\tilde{L}(x)$.
From the definition $\mathscr{L}$ is a function of $(u,p)\in T^*\mathbb{R}^n$.
Assume that the Hamiltonian $H$ is regular.
Then, by the inverse map of $F_H$,
one can convert it to a function of $(u,\dot{u})$,
which is the Lagrangian function $\mathscr{L}(u,\dot{u})$.
The equations of motion of the Hamiltonian $H$ are equivalent to
the {\em Euler-Lagrange equations}
\begin{align}
\label{eq:leq2}
\frac{d}{dt}
\left(
\frac{\partial \mathscr{L}}{\partial \dot{u}_i}
\right)=  \frac{\partial \mathscr{L}}{\partial {u}_i},
\end{align}
together with the identification of variables $p_i$,
\begin{align}
\label{eq:leq1}
p_i = \frac{\partial \mathscr{L}}{\partial \dot{u}_i}.
\end{align}

There is a parallel notion of regularity for a Lagrangian.

\begin{defn}
We say that
a Lagrangian $\mathscr{L}(u,\dot{u})$ is  {\em regular}
if the map
\begin{align}
\label{eq:reg2}
\begin{matrix}
F_{\mathscr{L}}:&T\mathbb{R}^n&  \rightarrow & T^*\mathbb{R}^n\\
& (u,\dot{u}) & \mapsto & (u,p).
\end{matrix}
\end{align}
is a diffeomorphism,
where $p_i$ is defined by \eqref{eq:leq1}.
\end{defn}
It is known (e.g., \cite[Sec. 3.6]{Abraham78}) that from a regular Hamiltonian
one obtains a regular Lagrangian by the Legendre transformation;
conversely, 
from a regular Lagrangian
one obtains a regular Hamiltonian by the Legendre transformation.
In either case the two systems are equivalent in the above sense.

\subsection{Lagrangian and Rogers dilogarithm}
\label{subsec:lag1}
Let us consider the Hamiltonian 
$H=H^{B}_{k,\varepsilon}$ from \eqref{eq:Hk1} in the canonical coordinates $(u,p)$.
By \eqref{eq:Hu1},
we have
\begin{align}
\label{eq:dotu1}
\dot{u}_i&=
-\frac{1}{2}\delta_{ki} \log (1+y_k^{\varepsilon}).
\end{align}
Thus, the  map $F_H$ in \eqref{eq:reg1} is far from surjective.
Therefore, the Hamiltonian $H$ is singular (i.e., not regular), unfortunately.

Nevertheless, let us write the Lagrangian in \eqref{eq:lag1} explicitly,
for now, as a function of $(u,p)$,
\begin{align}
\label{eq:lag2}
\mathscr{L}^{B}_{k,\varepsilon}(u,p)
&=
-\frac{1}{2}\log (1+y_k^{\varepsilon})
p_k -
\frac{\varepsilon}{2d_k} \mathrm{Li}_2(-y_k^{\varepsilon}).
\end{align}
Inverting the relation \eqref{eq:dotu1} for $i=k$, we regard $y_k$ as a function of $\dot{u}_k$.
Then, we also regard $p_k$
as a function of $\dot{u}_k$ and $u_1,\dots,
u_{k-1},u_{k+1},\dots,u_n$ by the relation
\begin{align}
\label{eq:pk1}
p_k=d_k^{-1}(\log y_k- w_k).
\end{align}
Thus, the function  $\mathscr{L}^{B}_{k,\varepsilon}(u,p)$ is
converted to a function of 
$(u,\dot{u})$ by
\begin{align}
\label{eq:lag3}
\mathscr{L}^{B}_{k,\varepsilon}
(u,\dot{u})
&=
-\frac{1}{2d_k}\log (1+y_k^{\varepsilon})
(\log y_k - w_k) -
\frac{\varepsilon}{2d_k} \mathrm{Li}_2(-y_k^{\varepsilon}),
\end{align}
 despite the fact that the Hamiltonian is singular.

Of course, we have to pay some price.
The Lagrangian $\mathscr{L}^{B}_{k,\varepsilon}(u,\dot{u})$ is {\em singular},
since
it is independent of the variables $\dot{u}_i$ for $i \neq k$.
Moreover, it is {\em not} equivalent to the original Hamiltonian system.
\begin{prop}
\label{prop:le1}
(1) For $i=k$, the equation
\eqref{eq:leq2}, together with \eqref{eq:leq1},
yields
\begin{align}
\label{eq:Hu4}
y_k(t)=e^{d_k p_k(t) + w_k(t)},
\quad
 \dot{p}_k(t)=0.
\end{align}
(2) For $i\neq k$, the equation
\eqref{eq:leq2}, together with \eqref{eq:leq1},
yields
\begin{align}
\label{eq:Hu5}
p_i(t)=0,
\quad
b_{ki}\log (1+y_k(t)^{\varepsilon})=0.
\end{align}
\end{prop}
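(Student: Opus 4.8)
The plan is to read everything off from the explicit form \eqref{eq:lag3} of $\mathscr{L}^{B}_{k,\varepsilon}(u,\dot u)$, after two structural observations that trivialize the bookkeeping. First, inverting \eqref{eq:dotu1} for $i=k$ expresses $y_k$ as a function of $\dot u_k$ alone; second, since $b_{kk}=0$, the combination $w_k=\sum_{j}b_{jk}u_j$ does not involve $u_k$. Hence $\mathscr{L}^{B}_{k,\varepsilon}(u,\dot u)$ depends on $\dot u$ only through $y_k=y_k(\dot u_k)$ and on $u$ only through $w_k$; in particular $\partial\mathscr{L}^{B}_{k,\varepsilon}/\partial\dot u_i=0$ for $i\neq k$ and $\partial\mathscr{L}^{B}_{k,\varepsilon}/\partial u_k=0$. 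I will also use that $\partial\mathscr{L}^{B}_{k,\varepsilon}/\partial w_k=\tfrac{1}{2d_k}\log(1+y_k^{\varepsilon})$, obtained by differentiating \eqref{eq:lag3} in $w_k$ with $y_k$ held fixed.

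For part (2), take $i\neq k$. The identification \eqref{eq:leq1} gives $p_i=\partial\mathscr{L}^{B}_{k,\varepsilon}/\partial\dot u_i=0$, so the left-hand side of \eqref{eq:leq2} is $\dot p_i=0$, while its right-hand side is $\partial\mathscr{L}^{B}_{k,\varepsilon}/\partial u_i=(\partial\mathscr{L}^{B}_{k,\varepsilon}/\partial w_k)\,b_{ik}=\tfrac{b_{ik}}{2d_k}\log(1+y_k^{\varepsilon})$. Thus \eqref{eq:leq2} reads $b_{ik}\log(1+y_k(t)^{\varepsilon})=0$, and since $d_ib_{ik}=-d_kb_{ki}$ this is equivalent to $b_{ki}\log(1+y_k(t)^{\varepsilon})=0$, as asserted.

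For part (1), take $i=k$. The key point --- which must be checked directly, since $H^{B}_{k,\varepsilon}$ is singular and the usual Legendre duality is not available --- is that $\partial\mathscr{L}^{B}_{k,\varepsilon}/\partial\dot u_k$ equals $p_k:=d_k^{-1}(\log y_k-w_k)$, the function of $\dot u_k$ and $w_k$ appearing in \eqref{eq:pk1}. I would verify this by writing $\mathscr{L}^{B}_{k,\varepsilon}=\dot u_k\,p_k-H^{B}_{k,\varepsilon}$ with $y_k=y_k(\dot u_k)$ and applying the chain rule in $\dot u_k$: one gets $\partial\mathscr{L}^{B}_{k,\varepsilon}/\partial\dot u_k=p_k+\dot u_k\,(\partial p_k/\partial\dot u_k)-(dH^{B}_{k,\varepsilon}/dy_k)(dy_k/d\dot u_k)$, and since $\dot u_k=\partial H^{B}_{k,\varepsilon}/\partial p_k=d_ky_k\,(dH^{B}_{k,\varepsilon}/dy_k)$ we have $\dot u_k\,(\partial p_k/\partial\dot u_k)=\dot u_k\,d_k^{-1}y_k^{-1}(dy_k/d\dot u_k)=(dH^{B}_{k,\varepsilon}/dy_k)(dy_k/d\dot u_k)$, so the last two terms cancel and only $p_k$ survives. (Equivalently, one differentiates \eqref{eq:lag3} directly in $\dot u_k$ using $d\dot u_k/dy_k=-\tfrac12\varepsilon y_k^{\varepsilon-1}/(1+y_k^{\varepsilon})$.) Then \eqref{eq:leq1} for $i=k$ reads $p_k=d_k^{-1}(\log y_k-w_k)$, that is, $y_k(t)=e^{d_kp_k(t)+w_k(t)}$; and \eqref{eq:leq2} for $i=k$ becomes $\dot p_k(t)=\partial\mathscr{L}^{B}_{k,\varepsilon}/\partial u_k=0$. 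This establishes (1).

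The differentiation of \eqref{eq:lag3} is routine; the only genuinely delicate point is the cancellation in the previous paragraph, which is precisely the trace of the singularity of $H^{B}_{k,\varepsilon}$: it forces \eqref{eq:leq1} at $i=k$ to carry no information beyond the defining relation of $y_k$, rather than a nontrivial Hamilton equation.
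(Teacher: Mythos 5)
Your proposal is correct and follows essentially the same route as the paper: compute the four partial derivatives $\partial\mathscr{L}/\partial\dot u_k$, $\partial\mathscr{L}/\partial u_k$, $\partial\mathscr{L}/\partial\dot u_i$, $\partial\mathscr{L}/\partial u_i$ ($i\neq k$) of the explicit singular Lagrangian \eqref{eq:lag3} and substitute them into \eqref{eq:leq1} and \eqref{eq:leq2}, using $d_i b_{ik}=-d_k b_{ki}$ at the end. The only difference is that you spell out the chain-rule cancellation showing $\partial\mathscr{L}/\partial\dot u_k=d_k^{-1}(\log y_k-w_k)$, which the paper's proof simply records as \eqref{eq:le1}.
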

\begin{proof}
(1) For $i=k$,
\begin{align}
\label{eq:le1}
\frac{\partial \mathscr{L}}{\partial \dot{u}_k}
&=
\frac{1}{d_k}(\log y_k - w_k),
\\
\label{eq:le2}
\frac{\partial \mathscr{L}}{\partial {u}_k}
&=0.
\end{align}
(2) 
For $i\neq k$,
\begin{align}
\label{eq:le3}
\frac{\partial \mathscr{L}}{\partial \dot{u}_i}
&=
0,
\\
\label{eq:le4}
\frac{\partial \mathscr{L}}{\partial u_i}
&= 
\frac{b_{ik}}{2d_k}\log (1+y_k^{\varepsilon})
=
- \frac{b_{ki}}{2d_i}\log (1+y_k^{\varepsilon}).
\end{align}
\end{proof}

Therefore, the Euler-Lagrange equation for
$i=k$ is a part of the equations of motion
of the original Hamiltonian system in Proposition \ref{prop:Hs1}.
On the other hand, the ones for $i\neq k$ set an unwanted restriction
\eqref{eq:Hu5},
and  we have to avoid using them.
%which also forces to take the limit 
%\begin{align}
%p_k(t)\rightarrow \mp \infty \quad\text{for $\varepsilon=\pm$}
%\end{align}
%to have $y_k^{\varepsilon}=0$.

Putting this defect aside, let us evaluate  $\mathscr{L}^{B}_{k,\varepsilon}$
on the small phase space $M_0$.
Recall that we have $y_k = e^{2d_kp_k}$ on $M_0$.
Thus, we have
\begin{align}
p_k = \frac{1}{2d_k}\log y_k
=\frac{\varepsilon}{2d_k}\log y_k^{\varepsilon}
\quad
\text{on $M_0$}.
\end{align}
Thus,  $\mathscr{L}^{B}_{k,\varepsilon}$  depends
only on $y_k$, or equivalently, on $y_k^{\varepsilon}$.
So, let us write it is as a function of $y_k^{\varepsilon}$ as
 $\mathscr{L}^{B}_{k,\varepsilon}(y_k^{\varepsilon})$
for our convenience.
Then, putting it in \eqref{eq:lag2},
we obtain
\begin{align}
\label{eq:lm1}
\mathscr{L}^{B}_{k,\varepsilon}(y_k^{\varepsilon})
=
-\frac{\varepsilon}{4d_k}\log y_k^{\varepsilon}
\log (1+y_k^{\varepsilon})
-\frac{\varepsilon}{2d_k} \mathrm{Li}_2(-y_k^{\varepsilon}).
\end{align}

Now the Rogers dilogarithm emerges in our picture.
\begin{prop}
\label{prop:Leg2}
The function $\mathscr{L}^{B}_{k,\varepsilon}(y_k^{\varepsilon})$
 is given by the  Rogers dilogarithm $\tilde{L}(x)$ in \eqref{eq:L4} as
\begin{align}
\label{eq:LL1}
\mathscr{L}^{B}_{k,\varepsilon}(y_k^{\varepsilon})
= \frac{\varepsilon}{2d_k} \tilde{L}(y_k^{\varepsilon}).
\end{align}
\end{prop}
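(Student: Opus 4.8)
The plan is to read off the claim directly from the explicit formula \eqref{eq:lm1} for $\mathscr{L}^{B}_{k,\varepsilon}(y_k^{\varepsilon})$ together with the defining relation \eqref{eq:LLi3} for $\tilde{L}$. First I would recall that, restricted to the small phase space $M_0$, we established $y_k = e^{2d_k p_k}$, hence $p_k = \tfrac{1}{2d_k}\log y_k = \tfrac{\varepsilon}{2d_k}\log y_k^{\varepsilon}$ on $M_0$; substituting this into \eqref{eq:lag2} gives precisely
\begin{align*}
\mathscr{L}^{B}_{k,\varepsilon}(y_k^{\varepsilon})
=
-\frac{\varepsilon}{4d_k}\log y_k^{\varepsilon}\,\log (1+y_k^{\varepsilon})
-\frac{\varepsilon}{2d_k} \mathrm{Li}_2(-y_k^{\varepsilon}),
\end{align*}
which is \eqref{eq:lm1}. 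So the work reduces entirely to the scalar identity relating the right-hand side above to $\tilde L$.

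Next I would factor out $\tfrac{\varepsilon}{2d_k}$ to write
\begin{align*}
\mathscr{L}^{B}_{k,\varepsilon}(y_k^{\varepsilon})
=
\frac{\varepsilon}{2d_k}\left(
-\mathrm{Li}_2(-y_k^{\varepsilon})
-\frac{1}{2}\log y_k^{\varepsilon}\,\log (1+y_k^{\varepsilon})
\right),
\end{align*}
and then invoke \eqref{eq:LLi3}, namely $\tilde{L}(x) = -\mathrm{Li}_2(-x) - \tfrac{1}{2}\log x\,\log(1+x)$ for $0\le x$, with $x = y_k^{\varepsilon}$ (which is positive since the $y$-variables are specialized in $\mathbb{R}_+$). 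The parenthesized expression is then exactly $\tilde{L}(y_k^{\varepsilon})$, yielding \eqref{eq:LL1}.

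Since \eqref{eq:LLi3} was itself already verified in Section 2 (by differentiation, via \eqref{eq:LLi2}), there is essentially no remaining obstacle: the proposition is a bookkeeping consequence of the computation of $\mathscr{L}^{B}_{k,\varepsilon}$ on $M_0$ and of the Legendre-type relation between the Euler and Rogers dilogarithms. The only point requiring a word of care is the domain restriction ensuring $\tilde L$ is single-valued, which is handled by working with $y_k^{\varepsilon}>0$ as above.
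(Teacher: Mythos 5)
Your proposal is correct and matches the paper's argument: the paper likewise derives \eqref{eq:lm1} by substituting $p_k=\tfrac{\varepsilon}{2d_k}\log y_k^{\varepsilon}$ on $M_0$ into \eqref{eq:lag2} and then concludes \eqref{eq:LL1} directly from the identity \eqref{eq:LLi3}. Your added remark on the domain $y_k^{\varepsilon}>0$ is consistent with the paper's standing restriction to the real region.
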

\begin{proof}
This follows from \eqref{eq:LLi3} and \eqref{eq:lm1}.
\end{proof}

Let us  boldly phrase the above result   as
{\em ``the Rogers dilogarithm is a Legendre transformation of 
the Euler dilogarithm.''}
This justifies the observation stated in Section \ref{subsec:back1}.

%\subsection{Perturbation of Hamiltonian}
%{\bf (Probably we do not use it, but temporarily I keep it. )} 
%Here we introduce a perturbation of the singular Hamiltonian 
%$H^{B}_{k,\varepsilon}$ to make it regular,
%so that the Lagrangian formalism is fully applicable.
%This will be used in the application to prove dilogarithm identities later.
%
%Let $H=H^{B}_{k,\varepsilon}$ be the original Hamiltonian.
%For a small  real number $\lambda>0$,
%we introduce a perturbation
%of $H$,
%\begin{align}
%H_{\lambda}=H -\lambda \frac{\varepsilon}{2} \sum_{i=1}^n p_i^2.
%\end{align}
%The equation of motion for $u_i$ is
%\begin{align}
%\label{eq:Hu3}
%\dot{u}_i(t)&=
%\{H_{\lambda}, u_i(t)\}=
%-\frac{1}{2}\delta_{ki} \log (1+y_k(t)^{\varepsilon})
%-\varepsilon {\lambda} p_i.
%\end{align}
%Then, the Jacobian
%\begin{align}
%\det_{1\leq i,j\leq n}
%\left(
% \frac{\partial \dot{u}_i }{\partial p_j}
%\right)
%=(-\varepsilon \lambda)^{n-1}
%\left(
%-\frac{\varepsilon}{2}
%\frac{d_ky_k^{\varepsilon -1}}{1+{y_k^{\varepsilon}}}
%-\varepsilon \lambda
%\right)
%\end{align}
%does not vanish.
%Thus, the Hamiltonian $H_{\lambda}$ is regular.
%Therefore, the Lagrangian $\mathscr{L}_{\lambda}$ of $H_{\lambda}$
%reproduces the Hamiltonian flow of $H_{\lambda}$,
%and by taking the limit $\lambda\rightarrow 0$,
%one can recover the
%Hamiltonian flow of $H$.
%
\section{Periodicity of canonical variables}

\subsection{Universal and tropical semifields}
Let us recall two important classes of semifields, following \cite{Fomin07}.

\begin{defn}
Let $y=(y_1,\dots,y_n)$ be an $n$-tuple of formal commutative variables. 
\par
(1)
Define a semifield 
\begin{align}
\begin{split}
\mathbb{Q}_+(y)
=\biggl\{
\frac{p(y)}{q(y)}
\in \mathbb{Q}(y)
\mid
&\text{
$p(y)$ and $q(y)$ are nonzero polynomials of $y$}\\
&\hskip35pt\text{with nonnegative integer coefficients}
\biggr\},
\end{split}
\end{align}
where
the multiplication and the addition $\oplus$ are given by
the ordinary ones for the rational function field $\mathbb{Q}(y)$.
We call it the {\em universal semifield of $y$}.
\par
(2)
Define a semifield 
\begin{align}
\begin{split}
\mathrm{Trop}(y)
=\left\{
\prod_{i=1}^n
y_i^{a_i}
\mid
a_i\in \mathbb{Z}
\right\},
\end{split}
\end{align}
where
the multiplication is given by the ordinary one 
for monomials of $y$,
while
 the addition $\oplus$ is given by
 the {\em tropical sum}
\begin{align}
\left(
\prod_{i=1}^n
y_i^{a_i}
\right)
\oplus
\left(
\prod_{i=1}^n
y_i^{b_i}
\right)
=
\prod_{i=1}^n
y_i^{\min(a_i,b_i)}.
\end{align}
We call it the {\em tropical semifield of $y$}.
\end{defn}

There is a semifield homomorphism
\begin{align}
\label{eq:tropmap1}
\begin{matrix}
\pi_{\mathrm{trop}}&:
\mathbb{Q}_+(y)
&\rightarrow
&\mathrm{Trop}(y)\\
& y_i 
&\mapsto
& y_i,
\end{matrix}
\end{align}
which we call the {\em tropicalization map}.

\subsection{Periodicity of seeds and tropical periodicity}

Let $y=(y_1,\dots,y_n)$ be an $n$-tuple of formal commutative variables.
We consider a sequence of seed mutations  {\em with
coefficients in the universal semifield $\mathbb{Q}_+(y)$},
\begin{align}
\label{eq:seq1}
\begin{split}
(B,x,y)=(B[0],x[0],y[0])
\buildrel \mu_{k_0} \over \mapsto
(B[1],x[1],y[1])
\buildrel \mu_{k_1} \over \mapsto
\cdots \\
\cdots
\buildrel \mu_{k_{T-1}} \over \mapsto
(B[T],x[T],y[T]).
\end{split}
\end{align}
Note that the initial $y$-variables $y$ are set to be the generators $y$ of 
$\mathbb{Q}_+(y)$.

We may view the sequence \eqref{eq:seq1} as a discrete dynamical system
with  a discrete time $s=0,1,\dots, T$.
Let us introduce the notion of periodicity for this system.

Let $\mathcal{F}_0^{n}$ be the one defined in Section
\ref{subsec:mut1} with $\mathbb{P}=\mathbb{Q}_+(y)$.
We define a (left) action of a permutation $\sigma$ of $\{1,\dots,n\}$ on
$\mathcal{F}^{n}_0\times \mathbb{Q}_+(y)^n$ by
\begin{align}
\label{eq:sigma1}
&\begin{matrix}
\sigma: &\mathcal{F}^{n}_0\times  \mathbb{Q}_+(y)^n&
\rightarrow 
&
\mathcal{F}^{n}_0\times  \mathbb{Q}_+(y)^n\\
&
({x},z) &
\mapsto
&
({x}',z'),
\end{matrix}\\
\label{eq:sigma2}
&\qquad\qquad
x'_i =
x_{\sigma^{-1}(i)},
\\
\label{eq:sigma3}
&\qquad\qquad
z'_i =
z_{\sigma^{-1}(i)}.
\end{align}

\begin{defn}
\label{defn:period1}
Let $\sigma$ be a permutation of $\{1,\dots,n\}$.
We say that a sequence of mutations
\eqref{eq:seq1} is {\em $\sigma$-periodic}
if the following conditions hold for any $1\leq i,j\leq n$:
\begin{align}
\label{eq:bperiod1}
b_{\sigma^{-1}(i)\sigma^{-1}(j)}[T]=b_{ij}[0],\\
\label{eq:xperiod1}
x_{\sigma^{-1}(i)}[T]=x_{i}[0],\\
\label{eq:yperiod1}
y_{\sigma^{-1}(i)}[T]=y_{i}[0].
\end{align}

\end{defn}

Note that the conditions 
\eqref{eq:xperiod1} and \eqref{eq:yperiod1} are also expressed as
the following equality  on $\mathcal{F}^n_0
\times  \mathbb{Q}_+(y)^n$:
\begin{align}
\label{eq:period4}
\sigma
\circ\mu^{B[T-1]}_{k_{T-1}}
\cdots
\circ\mu^{B[1]}_{k_1}
\circ\mu^{B[0]}_{k_0}
=
\mathrm{id}.
\end{align}
%Indeed, the left hand side of \eqref{eq:period4} sends $(x,y)$ to $(x,y)$
%for the generatorts $y=(y_1,\dots,y_n)$ of $\mathbb{Q}_+(y)$
%by \eqref{eq:xperiod1} and \eqref{eq:yperiod1}.
%For a  general $z\in \mathbb{Q}_+(y)^n$, let 
% $\varphi: \mathbb{Q}_+(y)
%\rightarrow \mathbb{Q}_+(y)$ be the homomorphism such hat
%$\varphi( y_i )=z_i$. Then, $\varphi$ commutes with
%the mutations and $\sigma$.
%Therefore, 
%the left hand side of \eqref{eq:period4} sends $(x,z)$ to $(x,z)$.

The following fact is known.
\begin{prop}
[{\cite[Proposition 4.3]{Nakanishi16}}]
\label{prop:d1}
Let $D=\mathrm{diag}(d_1,\dots,d_n)$ be any common skew-symmetrizer of $B[s]$ ($s=0,\dots,T-1$).
Suppose that the condition
\eqref{eq:bperiod1} holds.
Then, the following equality holds:
\begin{align}
\label{eq:d1}
d_{\sigma(i)}=d_i.
\end{align}
\end{prop}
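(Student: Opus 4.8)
The plan is to combine the mutation‑invariance of the skew‑symmetrizer with the relabeling encoded in \eqref{eq:bperiod1}, and then reduce to the classical fact that a skew‑symmetrizer of a connected skew‑symmetrizable matrix is unique up to a positive scalar. Throughout, for an $n\times n$ skew‑symmetrizable matrix $C$ I refer to its \emph{diagram} as the graph on $\{1,\dots,n\}$ with an edge $\{i,j\}$ whenever $c_{ij}\neq 0$ (equivalently $c_{ji}\neq 0$).

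\textbf{Main steps.} First, by property (2) of seed mutation recalled in Section~\ref{subsec:mut1}, the common skew‑symmetrizer $D$ of $B[0],\dots,B[T-1]$ is automatically a skew‑symmetrizer of $B[T]=B[T-1]'$ as well; thus $d_i b_{ij}[0]=-d_j b_{ji}[0]$ and $d_i b_{ij}[T]=-d_j b_{ji}[T]$ for all $i,j$. Second, hypothesis \eqref{eq:bperiod1} reads $b_{ij}[T]=b_{\sigma(i)\sigma(j)}[0]$, so substituting into the skew‑symmetry of $D$ against $B[0]$ at the indices $\sigma(i),\sigma(j)$ gives
\begin{align}
d_{\sigma(i)}\,b_{ij}[T]=d_{\sigma(i)}\,b_{\sigma(i)\sigma(j)}[0]=-d_{\sigma(j)}\,b_{\sigma(j)\sigma(i)}[0]=-d_{\sigma(j)}\,b_{ji}[T],
\end{align}
i.e.\ the diagonal matrix $D^{\sigma}:=\mathrm{diag}(d_{\sigma(1)},\dots,d_{\sigma(n)})$ is also a skew‑symmetrizer of $B[T]$. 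Third, comparing the two skew‑symmetrizers $D$ and $D^{\sigma}$ of $B[T]$, for every edge $\{i,j\}$ of the diagram of $B[T]$ one divides the relations to obtain $d_{\sigma(i)}/d_i=d_{\sigma(j)}/d_j$; hence the function $i\mapsto d_{\sigma(i)}/d_i$ is constant on each connected component $W$ of that diagram, say equal to $\mu_W>0$.

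\textbf{Killing the scalar.} It remains to show $\mu_W=1$. The key point is that $\sigma$ maps each connected component of the diagram of $B[T]$ onto itself. This I would argue as follows: by \eqref{eq:Bmut1} a single mutation can never create an edge joining two different connected components, so the connectivity partition can only be refined along the sequence; since \eqref{eq:bperiod1} identifies the partition of $B[T]$ with the $\sigma^{-1}$‑relabeling of the partition of $B[0]$, these two partitions have the same block sizes, and a refinement with the same block sizes is an equality of partitions. Thus $\sigma$ at worst permutes the components among themselves; that it fixes each one individually follows from the remaining periodicity data of Definition~\ref{defn:period1} — concretely, the $y$‑variables in the universal semifield attached to a component involve only the generators indexed by that component, so $y_{\sigma^{-1}(i)}[T]=y_i[0]$ forces $\sigma$ to preserve it (this is also automatic if $B$ is assumed indecomposable). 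Granting $\sigma(W)=W$, iterating $d_{\sigma(i)}=\mu_W d_i$ for $i\in W$ and using that $\sigma$ has finite order $N$ yields $d_i=d_{\sigma^{N}(i)}=\mu_W^{\,N}d_i$, whence $\mu_W^{\,N}=1$ and therefore $\mu_W=1$. This gives $d_{\sigma(i)}=d_i$ for all $i$, as claimed.

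\textbf{Main obstacle.} The only nonformal point is the component‑preservation claim in the decomposable case: a priori $\sigma$ could interchange two isomorphic connected components carrying different values of $D$, and ruling this out is exactly where the monotonicity of the connectivity partition under mutation together with the full seed‑periodicity input (or an indecomposability hypothesis on $B$) must be invoked; everything else is a direct manipulation of the defining relation $d_i b_{ij}=-d_j b_{ji}$.
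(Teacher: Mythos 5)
The paper offers no proof of this proposition to compare against: it is imported verbatim from \cite[Proposition 4.3]{Nakanishi16}. Judged on its own, your argument is sound and is essentially the expected one. The chain ``$D$ also skew-symmetrizes $B[T]$ by mutation-invariance of skew-symmetrizers; \eqref{eq:bperiod1} then makes $\mathrm{diag}(d_{\sigma(1)},\dots,d_{\sigma(n)})$ a second skew-symmetrizer of $B[T]$; hence $d_{\sigma(i)}/d_i$ is constant on each connected component of the diagram; finiteness of the order of $\sigma$ kills that constant once $\sigma$ stabilizes each component'' is correct, as is your observation that a mutation never creates an edge between two components, so that \eqref{eq:bperiod1} forces $\sigma$ to permute the components. (A small simplification: since $\mu_k$ is an involution on $B$-matrices and never merges components, it cannot split them either, so the component partition is exactly mutation-invariant; your refinement-plus-block-size count is also fine.)

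The one point to state plainly is that your ``main obstacle'' is not merely a technical wrinkle: the component-stabilization step genuinely cannot be extracted from the stated hypothesis, so your appeal to \eqref{eq:yperiod1} (or to indecomposability of $B$) is an added assumption relative to the proposition as printed, which assumes only \eqref{eq:bperiod1}. Indeed, taken literally the statement fails for decomposable $B$: with $n=2$, $B[0]$ the zero matrix, $T=1$, $k_0=1$, $\sigma=(1\,2)$, condition \eqref{eq:bperiod1} holds and $D=\mathrm{diag}(1,2)$ is a common skew-symmetrizer, yet $d_{\sigma(1)}\neq d_1$. So the quoted hypothesis is an abridgment of the source, and some extra periodicity input is unavoidable. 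Fortunately, in every place the paper invokes the proposition (the proofs of Propositions \ref{prop:trop2} and \ref{prop:period2} and of Theorem \ref{thm:ext1}) the periodicity of the $y$-variables or of the $c$-vectors is available, and either one forces $\sigma(W)=W$ for each component $W$ by exactly the locality argument you sketch (the variables attached to $W$ involve only the generators indexed by $W$). With that input granted, your proof is complete; note only that your iteration $d_{\sigma^N(i)}=\mu_W^N d_i$ silently uses $\sigma(W)=W$ at every step, which is precisely what the granted claim provides.
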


Let us consider the ``tropicalization" of the sequence
\eqref{eq:seq1}.
By applying the tropicalization map $\pi_{\mathrm{trop}}$
in \eqref{eq:tropmap1} to each $y$-variable $y_i[s]$,
($s=0,\dots,T$) in the sequence
\eqref{eq:seq1},
we obtain a monomial of initial $y$-variables $y$,
\begin{align}
\label{eq:cmat1}
\pi_{\mathrm{trop}}(y_i[s])=
\prod_{j=1}^n
y_j^{c_{ji}[s]}.
\end{align}
The integer vector $c_i[s]=(c_{ji}[s])_{j=1}^n$ is called the
{\em $c$-vector} of $y_i[s]$.

The following fact is of fundamental importance
in the theory of cluster algebras.

\begin{thm}
[{(Sign-coherence of $c$-vectors),
\cite[Theorem 1.7]{Derksen10} with \cite[Proposition 5.6]{Fomin07},
\cite[Corollary 5.5]{Gross14}}]
\label{thm:sign1}
Each c-vector is a nonzero  vector, and
its components are either all nonnegative or all nonpositive.
\end{thm}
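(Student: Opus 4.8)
Sign-coherence is a deep structural theorem, so rather than a short self-contained argument the plan is to follow one of the two established routes, importing the heavy machinery as a black box. For the present skew-symmetrizable generality the cleanest route is via \emph{cluster scattering diagrams} in the sense of \cite{Gross14}; the decorated-representation route of \cite{Derksen10} gives the statement directly when $B=B[0]$ is skew-symmetric and could be transported to the skew-symmetrizable case by unfolding, but the scattering-diagram argument avoids that detour. I would present the scattering-diagram version and mention the representation-theoretic one at the end.

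First I would fix the initial data: the lattice $N=\mathbb{Z}^n$ with the skew-symmetric form determined by $B$ and the common skew-symmetrizer $D$ (so $\{e_i,e_j\}=d_jb_{ji}$), together with the dual space $M_\mathbb{R}=N^*\otimes\mathbb{R}$. Invoking the existence and uniqueness of the \emph{consistent} cluster scattering diagram $\mathscr{D}$ in $M_\mathbb{R}$ whose incoming walls are $\bigl(e_i^\perp,\,1+z^{e_i}\bigr)$ (the Kontsevich--Soibelman/Gross--Siebert construction; see \cite{Gross14}), one obtains a decomposition of a neighbourhood of the positive chamber $\mathcal{C}[0]$ into chambers, and a dictionary: the mutation sequence \eqref{eq:seq1} is matched with a path of chamber mutations $\mathcal{C}[0]\to\mathcal{C}[1]\to\cdots$, where $\mathcal{C}[s]$ is the image of $\mathcal{C}[0]$ under the tropicalized composite of the first $s$ mutations. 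Using \cite[Proposition 5.6]{Fomin07} together with the $C$/$G$ tropical duality, $\mathcal{C}[s]$ is cut out by the columns of the $C$-matrix $C[s]=(c_{ij}[s])$ (up to the $D$-twist governed by Proposition \ref{prop:d1}); concretely, $c_i[s]$ is $\pm$ the primitive normal of the wall of $\mathscr{D}$ that one crosses when mutating $\mathcal{C}[s]$ in direction $i$, the sign recording on which side of that wall $\mathcal{C}[s]$ lies.

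The heart of the argument is then \emph{positivity of $\mathscr{D}$}: every wall function is a power series of the form $\prod_m(1+z^{m})^{a_m}$ with each $m$ a positive integer multiple of a primitive lattice vector and $a_m\ge 0$. This holds because the initial wall functions $1+z^{e_i}$ have this shape and the Kontsevich--Soibelman commutator construction producing the remaining walls preserves it. Applied to the wall crossed at the $i$-th step, positivity forces its primitive normal to lie in a single closed half-space of the required sort, i.e.\ the coordinates of $c_i[s]$ are all $\ge 0$ or all $\le 0$; and $c_i[s]\neq 0$ because genuinely distinct adjacent chambers are separated by a wall of positive codimension, so the crossing direction is nonzero. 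This is precisely \cite[Corollary 5.5]{Gross14}, which upgrades the skew-symmetric statement \cite[Theorem 1.7]{Derksen10} to the skew-symmetrizable case cited here.

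The main obstacle is exactly this positivity step together with the existence/consistency input: proving that a consistent cluster scattering diagram exists and that positivity propagates through the algorithm is substantial, and the translation between the combinatorial $c$-vector mutation implicit in \eqref{eq:ymut1} and wall-crossing requires careful bookkeeping of sign and $D$-conventions. If one only wants skew-symmetric $B$, the alternative I would give is to identify $c_i[s]$, up to an overall sign, with the dimension vector of the negative (resp.\ positive) part of a decorated representation of the Jacobian algebra of the quiver with a generic potential attached to $B[s]$, where sign-coherence and nonvanishing are built into the mutation rule for decorated representations of \cite{Derksen10}; there the obstacle instead becomes setting up the mutation theory of decorated representations and the $E$-invariant.
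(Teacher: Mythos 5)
The paper does not prove this statement at all: Theorem \ref{thm:sign1} is imported verbatim as a known black box from exactly the references you invoke (\cite{Derksen10} with \cite{Fomin07} for the skew-symmetric case, \cite{Gross14} for the general skew-symmetrizable case), so your decision to defer to that machinery is fully in line with the paper's treatment, and your outline of the scattering-diagram route is a fair summary of how \cite{Gross14} obtains it. One calibration worth making if you keep the sketch: in \cite{Gross14} sign-coherence (their Corollary 5.5) rests on the structural fact that every wall of the consistent cluster scattering diagram is contained in $n^\perp$ for a primitive $n$ lying in the positive cone $N^+$, combined with the theorem that the chambers of the cluster complex really are chambers of $\mathscr{D}$ with c-vectors as the primitive wall normals; the positivity of wall functions as products $\prod_m (1+z^m)^{a_m}$ with $a_m\ge 0$ is a separate and substantially harder theorem (used for positivity of theta functions), and it is not the ingredient that forces the coordinates of $c_i[s]$ to share a sign. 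With that attribution corrected, your proposal is a correct account of the cited proofs, including the caveat that the representation-theoretic route of \cite{Derksen10} covers only the skew-symmetric case directly.
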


Based on this theorem, we define the following notion.
\begin{defn}
The {\em tropical sign $\varepsilon=\varepsilon(y_i[s])$ of $y_i[s]$} is given 
by  $+$ (resp. $-$) if  the components of the $c$-vector of $y_i[s]$ are
all nonnegative (resp. nonpositive).
\end{defn}

We introduce a sequence of signs,
$\varepsilon_0,\dots,\varepsilon_{T-1}$,
where
\begin{align}
\label{eq:tropss1}
\varepsilon_s =\varepsilon(y_{k_s}[s]),
\quad
(s=0,\dots,T-1),
\end{align}
and $k_s$ is the one in \eqref{eq:seq1}.
We call it the {\em tropical sign sequence} of \eqref{eq:seq1}.
Accordingly, we have the following sequence of
  transformations associated with
the sequence \eqref{eq:seq1}:

\begin{align}
\label{eq:tropseq1}
\begin{split}
\mathcal{F}^n_0
\times  \mathbb{Q}_+(y)^n
\buildrel \tau^{B[0]}_{k_0,\varepsilon_0} \over \rightarrow
\mathcal{F}^n_0
\times  \mathbb{Q}_+(y)^n
\buildrel \tau^{B[1]}_{k_{1},\varepsilon_{1}}
\over\rightarrow
\cdots
\buildrel \tau^{B[T-1]}_{k_{T-1},\varepsilon_{T-1}}\over \rightarrow
\mathcal{F}^n_0
\times  \mathbb{Q}_+(y)^n,
\end{split}
\end{align}
where $\tau^B_{k,\varepsilon}$ is the one in \eqref{eq:tau7}.

\begin{defn}
\label{defn:period2}
Let $\sigma$ be a permutation of $\{1,\dots,n\}$.
We say that a sequence of  transformations
\eqref{eq:tropseq1} is {\em $\sigma$-periodic}
if the following equality holds
 on $\mathcal{F}^n_0
\times  \mathbb{Q}_+(y)^n$:
\begin{align}
\label{eq:period1}
\sigma
\circ\tau^{B[T-1]}_{k_{T-1},\varepsilon_{T-1}}
\cdots
\circ\tau^{B[1]}_{k_1,\varepsilon_1}
\circ\tau^{B[0]}_{k_0,\varepsilon_0}
=
\mathrm{id}.
\end{align}
\end{defn}
Note that we do not assume the condition
\eqref{eq:bperiod1} here.

Each mutation in \eqref{eq:seq1}
is a  rational transformation,
while each transformation
in \eqref{eq:tropseq1} is  (the exponential form of) a linear transformation,
which is much simpler.
Surprisingly, the two  periodicities
in Definitions \ref{defn:period1} and \ref{defn:period2}
are equivalent.
The if-part of the following statement is
very nontrivial, and our proof is based on the recent result
by \cite{Cao17}.

\begin{prop}
\label{prop:trop1}
The sequence of mutations
\eqref{eq:seq1} is $\sigma$-periodic
if and only if
the sequence of  transformations
\eqref{eq:tropseq1} is $\sigma$-periodic.
\end{prop}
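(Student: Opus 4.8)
The plan is to prove the two directions separately, exploiting the explicit descriptions of $\tau^B_{k,\varepsilon}$ and the decomposition $\mu^B_k = \tau^B_{k,\varepsilon}\circ\rho^B_{k,\varepsilon}$. For the ``only if'' direction I would argue as follows. Suppose the sequence of mutations \eqref{eq:seq1} is $\sigma$-periodic, so that \eqref{eq:period4} holds. The key tool is the compatibility of tropicalization with the mutation decomposition: applying $\pi_{\mathrm{trop}}$ to the $y$-part of \eqref{eq:seq1} produces the $c$-vector dynamics \eqref{eq:cmat1}, and by Theorem \ref{thm:sign1} each $\pi_{\mathrm{trop}}(y_{k_s}[s])$ is a monomial with a definite sign $\varepsilon_s$. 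Under tropicalization the nontropical part $\rho^B_{k,\varepsilon}$ becomes trivial on $y$-variables --- the factor $(1\oplus y_k^\varepsilon)^{-b_{ki}}$ tropicalizes to $1$ precisely because $\varepsilon = \varepsilon_s$ is the tropical sign, so $y_k^{\varepsilon_s}$ tropicalizes to a monomial in which $1\oplus(\cdot)$ selects the trivial term. Hence under $\pi_{\mathrm{trop}}$ the composite $\mu^{B[T-1]}_{k_{T-1}}\circ\cdots\circ\mu^{B[0]}_{k_0}$ reduces to $\tau^{B[T-1]}_{k_{T-1},\varepsilon_{T-1}}\circ\cdots\circ\tau^{B[0]}_{k_0,\varepsilon_0}$ acting on $\mathrm{Trop}(y)$, and $\sigma$-periodicity of \eqref{eq:seq1} forces \eqref{eq:period1} to hold on $\mathrm{Trop}(y)$. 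Since the $\tau$'s are (exponentials of) linear maps, an identity on the tropical semifield lifts to an identity on $\mathbb{Q}_+(y)$: the $y$-part is literally a monomial transformation determined by an integer matrix, and the $x$-part of $\tau$ is likewise monomial and independent of the semifield. This gives \eqref{eq:period1}.

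For the ``if'' direction --- the hard part --- suppose the $\tau$-sequence \eqref{eq:tropseq1} is $\sigma$-periodic. I would first extract from \eqref{eq:period1}, by looking at the induced action on $\mathrm{Trop}(y)$, that the $C$-matrices satisfy a periodicity: $C[T]$ equals the permutation matrix of $\sigma$ applied to $C[0] = I$, i.e. the full composite of tropical $y$-mutations returns the initial $c$-vectors up to $\sigma$. This is exactly the hypothesis under which the recent theorem of Cao--Li (\cite{Cao17}) applies: periodicity of $c$-vectors (equivalently $C$-matrices, equivalently $g$-vectors) along a mutation sequence implies periodicity of the whole seed, including the $B$-matrices, $x$-variables and $y$-variables --- that is, \eqref{eq:bperiod1}, \eqref{eq:xperiod1}, \eqref{eq:yperiod1}. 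The subtle points are (a) reading off the $C$-matrix periodicity cleanly from \eqref{eq:period1}, which requires knowing that the linear map underlying $\tau^{B[s]}_{k_s,\varepsilon_s}$ on the $\log y$-coordinates is exactly the mutation matrix acting on $c$-vectors when $\varepsilon_s$ is the tropical sign --- this should follow from \eqref{eq:tau5} together with \eqref{eq:cmat1} and sign-coherence; and (b) invoking \cite{Cao17} in the precise form needed. One should also check that the permutation $\sigma$ appearing in the two definitions is forced to be the same, which again comes from comparing the tropical actions.

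The main obstacle is step (b) of the converse: the implication ``$c$-vector periodicity $\Rightarrow$ seed periodicity'' is genuinely deep and is the content of the cited work of Cao--Li; everything else is bookkeeping with the explicit formulas \eqref{eq:tau1}--\eqref{eq:tau2}, \eqref{eq:tau5}--\eqref{eq:tau6} and the sign-coherence Theorem \ref{thm:sign1}. I would therefore structure the write-up so that the forward direction is dispatched quickly via tropicalization-compatibility of the decomposition $\mu = \tau\circ\rho$, and the bulk of the argument is a careful reduction of the backward direction to the statement of \cite{Cao17}, flagging clearly where sign-coherence is used to guarantee that the chosen signs $\varepsilon_s$ in \eqref{eq:tropseq1} are the tropical ones and hence that $\tau$ genuinely computes the $c$-vector recursion.
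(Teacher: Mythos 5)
Your backward (``if'') direction is essentially the paper's own route: read off the $c$-vector periodicity from the $y$-part of \eqref{eq:period1} (using sign-coherence, Theorem \ref{thm:sign1}, to identify the $y$-part of $\tau^{B[s]}_{k_s,\varepsilon_s}$ with the $c$-vector recursion, as in \cite[Proposition 1.3]{Nakanishi11a}), and then invoke \cite{Cao17}. Be aware, though, that the result cited in the paper, \cite[Theorem 2.5]{Cao17}, gives periodicity of the $F$-polynomials from periodicity of $c$-vectors; the periodicity of the $B$-matrices comes from the $c$-vector recursion itself (\cite[Eq.~(2.9)]{Nakanishi11a}), and the periodicity of the $x$- and $y$-variables is then recovered from the separation formulas \cite[Corollary 6.3, Proposition 3.13]{Fomin07}. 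So your phrase ``Cao--Li gives periodicity of the whole seed'' papers over two standard but necessary steps; with those inserted, this direction is the same argument as in the text.

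The genuine gap is in your forward (``only if'') direction. Tropicalizing the $y$-variables along \eqref{eq:seq1} does, as you say, kill the nontropical factors (since $\varepsilon_s$ is the tropical sign, $\pi_{\mathrm{trop}}(1\oplus y_{k_s}[s]^{\varepsilon_s})=1$), and this yields exactly the $y$-part of \eqref{eq:period1}, i.e.\ the $\sigma$-periodicity of $c$-vectors, which then lifts from $\mathrm{Trop}(y)$ to $\mathbb{Q}_+(y)$ because the maps are monomial. But \eqref{eq:period1} is an identity on $\mathcal{F}^n_0\times\mathbb{Q}_+(y)^n$, and its $x$-part --- that the composite of the monomial transformations \eqref{eq:trop1}, i.e.\ the $g$-vector recursion, also returns to the identity up to $\sigma$ --- does not follow from the $\tau$'s being ``monomial and independent of the semifield'': a composite of monomial maps need not be trivial. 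Nor can you get it by tropicalizing the $x$-variables: the nontropical part acts on $x_{k_s}$ by the factor $(1+\hat{y}_{k_s}^{\varepsilon_s})/(1\oplus y_{k_s}^{\varepsilon_s})$, whose numerator involves the ordinary addition of $\mathcal{F}$ and does not collapse under $\pi_{\mathrm{trop}}$, so the seed periodicity does not directly reduce to the $\tau$-composite on the $x$-side. This is precisely where the paper inserts an extra input, namely the duality of $c$- and $g$-vectors \cite[Eq.~(3.11)]{Nakanishi11a} (together with $d_{\sigma(i)}=d_i$, Proposition \ref{prop:d1}), which converts $c$-vector periodicity into $g$-vector periodicity; alternatively, within this paper one could argue stepwise from the relation $N^TM=I$ in the proof of Proposition \ref{prop:tau1}. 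Without some such argument, your concluding ``This gives \eqref{eq:period1}'' is unsupported.
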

\begin{proof}
First, we note that,
by \cite[Proposition 1.3]{Nakanishi11a} 
and Theorem \ref{thm:sign1},
the sequence of  transformations
\eqref{eq:tropseq1}
is the exponential form of
 the transformations
of the corresponding $c$-vectors $c_i[s]=(c_{ji}[s])_{j=1}^n$ and $g$-vectors
$g_i[s]=(g_{ji}[s])_{j=1}^n$
along the sequence \eqref{eq:seq1},
where  $g$-vectors are defined in \cite[Section 6]{Fomin07}.
Thus, the $\sigma$-periodicity of the sequence 
\eqref{eq:tropseq1} is equivalent to the $\sigma$-periodicity of
 $c$- and $g$-vectors,
 i.e.,
  \begin{align}
\label{eq:cg1}
{c}_{ij}[T]={c}_{i\sigma(j)}[0]=\delta_{i\sigma(j)},
\quad
{g}_{ij}[T]={g}_{i\sigma(j)}[0]=\delta_{i\sigma(j)}.
\end{align}
\par
(Only-if-part.)
Assume that 
the sequence 
\eqref{eq:seq1} is $\sigma$-periodic.
The $\sigma$-periodicity of $c$-vectors directly follows
from
the $\sigma$-periodicity
of $y$-variables
\eqref{eq:yperiod1} by applying the tropicalization map
in \eqref{eq:tropmap1}.
Then,  the $\sigma$-periodicity of $g$-vectors follows
from the duality of $c$- and $g$-vectors in
\cite[Eq. (3.11)]{Nakanishi11a}.
\par
(If-part.) 
Assume that 
the sequence 
\eqref{eq:tropseq1} is $\sigma$-periodic.
The $\sigma$-periodicity of $c$-vectors  implies
the $\sigma$-periodicity of exchange matrices $B[s]$
thanks to  \cite[Eq.~(2.9)]{Nakanishi11a}.
Furthermore,
let 
$F_i[s]$ be the
 $F$-polynomials
along the sequence \eqref{eq:seq1},
which are defined in
 \cite[Section 3]{Fomin07}.
 Then,
by \cite[Theorem 2.5]{Cao17},
the $\sigma$-periodicity of $c$-vectors implies
the $\sigma$-periodicity of $F$-polynomials,
i.e.,
  \begin{align}
\label{eq:F1}
F_{\sigma^{-1}(i)}[T]=F_{i}[0].
\end{align}
Then, the $\sigma$-periodicity of $x$-variables
(resp. $y$-variables)
follows form the formula in 
\cite[Corollary 6.3]{Fomin07}
(resp.
\cite[Proposition 3.13]{Fomin07}).
\end{proof}

%\begin{rem}
%\label{rem:period1}
%The proof of Proposition \ref{prop:trop1} shows that
%one can safely omit the condition \eqref{eq:bperiod1}
%in Definition \ref{defn:period1}.
%\end{rem}

\begin{rem}
\label{rem:assum1}
Note that the above proof of the only-if-part uses
only the assumption 
\eqref{eq:yperiod1}.
\end{rem}

\subsection{Periodicity of canonical variables and tropical periodicity}
Let us consider the counterparts of 
the two  periodicities
in Definitions \ref{defn:period1} and \ref{defn:period2}
for canonical variables.

Let us define a (left) action of a permutation $\sigma$
of $\{1,\dots,n\}$ on
$\mathbb{R}^{2n}$,
\begin{align}
\label{eq:sigma4}
&\begin{matrix}
\sigma: &\mathbb{R}^{2n}&
\rightarrow 
&
 \mathbb{R}^{2n}\\
&
(u,p) &
\mapsto
&
(u',p'),
\end{matrix}\\
\label{eq:sigma5}
&\qquad\qquad
u'_i =
u_{\sigma^{-1}(i)},
\\
\label{eq:sigma6}
&\qquad\qquad
p'_i =
p_{\sigma^{-1}(i)}.
\end{align}
Since the map $\sigma$
is a canonical transformation,
we may regard it as a change of canonical coordinates on the phase space $M$.

Let $\varepsilon_0,\dots,\varepsilon_{T-1}$
continue to be the tropical sign sequence of \eqref{eq:seq1}.
Let  $(u[0],p[0])$ be an arbitrary point in $\mathbb{R}^{2n}$.
In parallel to the sequence of mutations \eqref{eq:seq1},
we consider a sequence of signed mutations
on $\mathbb{R}^{2n}$,
\begin{align}
\label{eq:seq2}
\begin{split}
(u[0],p[0])
\buildrel \mu^{B[0]}_{k_0,\varepsilon_0} \over \mapsto
(u[1],p[1])
\buildrel \mu^{B[1]}_{k_1,\varepsilon_1} \over \mapsto
\cdots
\buildrel \mu^{B[T-1]}_{k_{T-1},\varepsilon_{T-1}}
 \over \mapsto
(u[T],p[T]),
\end{split}
\end{align}
where
\begin{align}
\label{eq:decom3}
 \mu^{B[s]}_{k_s,\varepsilon_s} =
 \tau^{B[s]}_{k_s,\varepsilon_s} \circ
 \rho^{B[s]}_{k_s,\varepsilon_s},
 \quad
 (s=0,\dots,T-1),
\end{align}
and $ \rho^{B[s]}_{k_s,\varepsilon_s}$ are defined by \eqref{eq:sm1} under the following assumption:
\begin{ass}
We employ a common skew-symmetrizer $D$ 
of $B[s]$'s to define
$ \rho^{B[s]}_{k_s,\varepsilon_s}$'s.
\end{ass}

\begin{defn}
\label{defn:period3}
Let $\sigma$ be a permutation of $\{1,\dots,n\}$.
We say that a sequence of signed mutations
\eqref{eq:seq2} is {\em $\sigma$-periodic}
if the following conditions hold
 for any  initial point $(u[0],p[0])\in \mathbb{R}^{2n}$
 and  for any $1\leq i\leq n$:
\begin{align}
\label{eq:bperiod2}
b_{\sigma^{-1}(i)\sigma^{-1}(j)}[T]=b_{ij}[0],\\
\label{eq:uperiod1}
u_{\sigma^{-1}(i)}[T]=u_{i}[0],\\
\label{eq:pperiod1}
p_{\sigma^{-1}(i)}[T]=p_{i}[0].
\end{align}
\end{defn}

%\begin{rem}
%\label{rem:period2}
%According to Remark \ref{rem:period1} and the forthcoming  Proposition \ref{prop:},
%one can safely omit the condition \eqref{eq:bperiod2}
%in Definition \ref{defn:period1}.
%\end{rem}

The conditions 
\eqref{eq:uperiod1} and \eqref{eq:pperiod1} are also expressed as
the following equality 
on $\mathbb{R}^{2n}$:
\begin{align}
\label{eq:period3}
\sigma
\circ\mu^{B[T-1]}_{k_{T-1},\varepsilon_{T-1}}
\cdots
\circ\mu^{B[1]}_{k_1,\varepsilon_1}
\circ\mu^{B[0]}_{k_0,\varepsilon_0}
=
\mathrm{id}.
\end{align}

\begin{prop}
\label{prop:period2}
The sequence of mutations
\eqref{eq:seq1} is $\sigma$-periodic
if and only if
the sequence of signed mutations
\eqref{eq:seq2}  is  $\sigma$-periodic.
\end{prop}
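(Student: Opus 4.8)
\textbf{Proof proposal for Proposition \ref{prop:period2}.}

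The plan is to reduce the $\sigma$-periodicity of the sequence of signed mutations \eqref{eq:seq2} to the already-established equivalence in Proposition \ref{prop:trop1}, using the decomposition \eqref{eq:decom3} of each signed mutation into a nontropical part $\rho$ and a tropical part $\tau$ and the structural results of Theorem \ref{thm:Hs3}. First I would observe that, by Theorem \ref{thm:Hs3}, along the sequence \eqref{eq:seq2} the $x$-variables (on $M_0$) and the $y$-variables transform exactly as in an honest sequence of seed mutations with coefficients in $\mathbb{R}_+$, independently of the sign sequence, \emph{provided} one starts from suitable initial data; and the exchange matrices transform by \eqref{eq:Bmut1}, again independently of $\varepsilon$. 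So the content to be extracted is that periodicity of the full system $(u,p)$ is controlled jointly by periodicity of the $y$-variables (equivalently, of the $c$-vectors, by tropicalization) and periodicity of the tropical transformations $\tau$.

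The key steps, in order: (i) Note that the map $\sigma$ of \eqref{eq:sigma4}--\eqref{eq:sigma6} commutes with the passage from $(u,p)$ to $(x,y)$ and to the $B$-matrix in the obvious way, and that \eqref{eq:period3} is equivalent to the pair \eqref{eq:uperiod1}, \eqref{eq:pperiod1} holding for all initial points. (ii) For the \emph{only-if} direction: assume \eqref{eq:seq1} is $\sigma$-periodic; then by Proposition \ref{prop:trop1} the tropical sequence \eqref{eq:tropseq1} is $\sigma$-periodic, i.e.\ \eqref{eq:period1} holds. Decompose the composite signed mutation using \eqref{eq:decom3}. The nontropical parts $\rho^{B[s]}_{k_s,\varepsilon_s}$ are time-one Hamiltonian flows that depend only on $y_{k_s}[s]^{\varepsilon_s}$ (Proposition \ref{prop:Hs1}(2), \ref{prop:Hs2}); since the sign $\varepsilon_s$ is the \emph{tropical} sign, the quantity $1 + y_{k_s}^{\varepsilon_s}$ appearing in \eqref{eq:Mu3}--\eqref{eq:Mdp3} is governed by the $c$-vectors, which are $\sigma$-periodic. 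Using the explicit formulas \eqref{eq:Mu3} and \eqref{eq:Mp3} together with $d_{\sigma(i)} = d_i$ (Proposition \ref{prop:d1}, applicable because of \eqref{eq:bperiod1}), one then checks directly that the accumulated shifts in $u$ and $p$ coming from the $\log(1+y_k^{\varepsilon})$ terms, together with the linear part coming from the $\tau$'s, compose to $\sigma^{-1}$ as required. (iii) For the \emph{if} direction: assume \eqref{eq:seq2} is $\sigma$-periodic for all initial points. Specializing the $(u,p)$ to lie on $M_0$ and reading off the induced transformations of $x$- and $y$-variables via Theorem \ref{thm:Hs3}, the $\sigma$-periodicity of $(u,p)$ forces \eqref{eq:xperiod1} and \eqref{eq:yperiod1} (with $\mathbb{P}=\mathbb{R}_+$); but since these are polynomial/subtraction-free identities that hold for a Zariski-dense set of real specializations, they hold as identities over $\mathbb{Q}_+(y)$, giving \eqref{eq:seq1} $\sigma$-periodicity once \eqref{eq:bperiod1} is also checked — and \eqref{eq:bperiod1} is part of the hypothesis in Definition \ref{defn:period3}.

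The main obstacle I anticipate is step (ii): isolating precisely which piece of the periodicity comes from the \emph{linear} tropical data and which from the \emph{nonlinear} $\log(1+y_k^\varepsilon)$ corrections, and verifying that the latter telescope correctly under the permutation $\sigma$. The subtlety is that the nontropical shift at stage $s$ involves $y_{k_s}[s]^{\varepsilon_s}$, a genuine rational function of the initial $y$'s, whereas the linear tropical bookkeeping only sees $c$- and $g$-vectors; one must argue that, because $\varepsilon_s$ is chosen to be the tropical sign, the factor $1 + y_{k_s}^{\varepsilon_s}$ behaves ``as if tropical'' for the purposes of the cancellation — concretely, that along the $\sigma$-periodic loop the collection of logarithmic terms, transported by the intermediate linear maps $\tau$, sums to zero (for $p$) and to the $u$-shift dictated by $\sigma^{-1}$ composed with the identity (for $u$). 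I expect this to follow by writing the composite explicitly and comparing with the composite of the ordinary ($\varepsilon=+$, say) signed mutations, using the inversion relation \eqref{eq:inv3} and the $\varepsilon$-independence of $x,y,B$ from Theorem \ref{thm:Hs3} to reduce to a single distinguished sign choice, after which the only-if part of Proposition \ref{prop:trop1} applies verbatim. The rest is routine.
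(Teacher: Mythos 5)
Your skeleton (decompose each signed mutation via \eqref{eq:decom3}, use Propositions \ref{prop:trop1} and \ref{prop:trop2} to strip off the tropical part, then compare the nontropical shifts with the seed dynamics) is the same as the paper's, but both directions have genuine gaps at exactly the points you flag as "to be checked." In the only-if direction, the claim that the factors $\log(1+y_{k_s}[s]^{\varepsilon_s})$ are "governed by the $c$-vectors" is not true: $y_{k_s}[s]$ is a full rational function of the initial data, and its tropicalization does not control $\log(1+y_{k_s}^{\varepsilon_s})$. Your proposed remedy -- use \eqref{eq:inv3} and the $\varepsilon$-independence of $x,y,B$ to reduce to a single sign choice and then apply the only-if part of Proposition \ref{prop:trop1} "verbatim" -- cannot work, because the canonical variables $(u,p)$ \emph{do} depend on the sign sequence (this is the whole point of calling them signed mutations), and Proposition \ref{prop:trop1} says nothing about canonical variables. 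The paper closes this gap by a different comparison: for $p$ it shows the accumulated shift equals one half of the accumulated shift of $\log y_i/d_i$ along \eqref{eq:seq1} (after a specialization matching the initial $y$'s), so $y$-periodicity gives $\Delta p=0$; for $u$ the comparison with $\log x_i$ is \emph{not} exact, because the $x$-mutation with coefficients involves $(1+\hat y_k^{\varepsilon})/(1\oplus y_k^{\varepsilon})$ while the Hamiltonian shift involves $1+y_k^{\varepsilon}$. The residual discrepancy is an identity $f_i(y)=f_i(\hat y)$ in $\mathcal F_{\mathbb Q_+(y)}$, which forces $f_i$ to be a Laurent monomial, and then the limit $y\to 0$ -- legitimate precisely because the $\varepsilon_s$ are the tropical signs -- forces $f_i=1$, hence $\Delta u=0$. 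This extra argument is unavoidable exactly when $B[0]$ is not invertible, a case your sketch never confronts.

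In the if direction your density argument is also insufficient as stated. Restricting initial points to $M_0$ only reproduces the coefficient-free $x$-dynamics (and, when $B$ is not invertible, $y$-values confined to a proper subtorus, since $y_i=\hat y_i$ is a monomial in the $x$'s there), whereas Definition \ref{defn:period1} requires periodicity of $x$-variables with coefficients in the universal semifield $\mathbb Q_+(y)$ and of $y$-variables as elements of $\mathbb Q_+(y)$; the coefficient-free identity is the single specialization $y_i\mapsto 1$ and no Zariski-density argument upgrades it to the universal-coefficient identity. The paper instead works on the full phase space $M$, where for generic $(u[0],p[0])$ the initial $y_i=e^{d_ip_i+w_i}$ are algebraically independent in $\mathbb R_+$, obtaining \eqref{eq:yperiod1} over $\mathbb Q_+(y)$; the periodicity of the $x$-variables is then \emph{not} deduced by specialization at all, but by the if-part of Proposition \ref{prop:trop1} (which rests on the $F$-polynomial periodicity theorem of Cao--Li). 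So the bridge you need in this direction is precisely Proposition \ref{prop:trop1}, invoked after establishing $y$-periodicity on all of $M$, not a restriction to $M_0$ plus density.
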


The proof is a little lengthy, and it will be given in
Section \ref{subsec:pr1}.

\begin{ex}
\label{ex:inv1}
The inversion relation \eqref{eq:inv1}
is the simplest example of a $\sigma$-periodic sequence 
of mutations
\eqref{eq:seq1} with $T=2$, $k_1=k_2=k$, $\sigma=\mathrm{id}$,
and the tropical sign sequence is $\varepsilon_1=+$, $\varepsilon_2=-$.
The corresponding $\sigma$-periodic sequence of signed mutations 
is the inversion relation \eqref{eq:inv3} with $\varepsilon=+$.
\end{ex}

Next, let us consider the counterpart of the sequence \eqref{eq:tropseq1}
for canonical variables.
For the sequence \eqref{eq:seq2},
we introduce the following sequence of transformations,
\begin{align}
\label{eq:tropseq2}
\begin{split}
\mathbb{R}^{2n}
\buildrel \tau^{B[0]}_{k_0,\varepsilon_0} \over \rightarrow
\mathbb{R}^{2n}
\buildrel \tau^{B[1]}_{k_{1},\varepsilon_{1}}
\over\rightarrow
\cdots
\buildrel \tau^{B[T-1]}_{k_{T-1},\varepsilon_{T-1}}\over \rightarrow
\mathbb{R}^{2n},
\end{split}
\end{align}
where $\tau^B_{k,\varepsilon}$ is the one
  in \eqref{eq:tau8}.

\begin{defn}
\label{defn:period4}
Let $\sigma$ be a permutation of $\{1,\dots,n\}$.
We say that a sequence of  transformations
\eqref{eq:tropseq2} is {\em $\sigma$-periodic}
if the following equality holds
 on $\mathbb{R}^{2n}$:
\begin{align}
\label{eq:period2}
\sigma
\circ\tau^{B[T-1]}_{k_{T-1},\varepsilon_{T-1}}
\cdots
\circ\tau^{B[1]}_{k_1,\varepsilon_1}
\circ\tau^{B[0]}_{k_0,\varepsilon_0}
=
\mathrm{id}.
\end{align}
\end{defn}

\begin{prop}
\label{prop:trop2}
The sequence of transformations \eqref{eq:tropseq1} is $\sigma$-periodic
if and only if 
the sequence of transformations \eqref{eq:tropseq2} is $\sigma$-periodic.
\end{prop}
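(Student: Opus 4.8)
The plan is to reduce the $\sigma$-periodicity of the two sequences of transformations to a common underlying statement about the linear maps that $\tau^B_{k,\varepsilon}$ induces. The key observation is that both sequences \eqref{eq:tropseq1} and \eqref{eq:tropseq2} are (exponential forms of) linear transformations: in the $\mathcal{F}^n_0 \times \mathbb{Q}_+(y)^n$ setting one tracks the $c$-vectors and $g$-vectors along \eqref{eq:seq1}, exactly as recalled in the proof of Proposition \ref{prop:trop1}, while in the $\mathbb{R}^{2n}$ setting the map $\tau^B_{k,\varepsilon}$ acts on $(u,p)$ by the explicit formulas \eqref{eq:tau1}–\eqref{eq:tau2}, which are precisely the linear maps $M$ on $u$ and $N$ on $p$ from the proof of Proposition \ref{prop:tau1}. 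So the first step is to identify these two pairs of linear data. I expect that the map $u \mapsto Mu$ appearing in $\tau^B_{k,\varepsilon}$ on $\mathbb{R}^{2n}$ is (the transpose/inverse of) the very same matrix that governs the $g$-vector mutation, and that $p \mapsto Np$ governs the $c$-vector mutation; this is the content of the formulas in \cite[Proposition 1.3]{Nakanishi11a} together with \eqref{eq:tau3} and \eqref{eq:tau4} in Proposition \ref{prop:tau2}, which show that $w'$ and $d_i p'_i$ mutate tropically in $w$ and $d_i p_i$ exactly as the $x$- and $y$-monomials do.

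Granting this identification, the second step is purely formal. The $\sigma$-periodicity of \eqref{eq:tropseq1}, by the discussion in the proof of Proposition \ref{prop:trop1}, is equivalent to the matrix identity \eqref{eq:cg1} for $c$- and $g$-vectors, i.e.\ to the statement that the composite linear map on the $c$-lattice and the composite linear map on the $g$-lattice each equal the permutation matrix of $\sigma$. On the other hand, the $\sigma$-periodicity of \eqref{eq:tropseq2} is, by the linearity of each $\tau^B_{k,\varepsilon}$ and by \eqref{eq:sigma5}–\eqref{eq:sigma6}, equivalent to the statement that the composite linear map on the $u$-coordinates and the composite linear map on the $p$-coordinates each equal the permutation matrix of $\sigma$. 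Since the $u$-map is conjugate (via $D$ and transposition, using Proposition \ref{prop:d1} so that the permutation commutes with $D$) to the $g$-map and the $p$-map likewise to the $c$-map, the two periodicity conditions are literally the same system of matrix equations. One has to be slightly careful that $\mathbb{R}^{2n}$ carries the direct sum of the $u$-representation and the $p$-representation, so $\sigma$-periodicity there is the conjunction of the two half-statements; but the $\mathcal{F}^n_0 \times \mathbb{Q}_+(y)^n$ side is also such a conjunction (the $x$-part reading off the $g$-vectors, the $y$-part reading off the $c$-vectors), so the conjunctions match term by term.

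The main obstacle, and the step deserving the most care, is the bookkeeping in the first step: making precise the claim that $\tau^B_{k,\varepsilon}$ on $(u,p)$ realizes the $g$- and $c$-vector mutations, including the correct placement of $D$, transposes, and inverses, and checking that this holds uniformly along the whole sequence \eqref{eq:seq2} under the common-skew-symmetrizer Assumption. Once the dictionary is set up correctly, matching the sign conventions $\varepsilon_s$ (which are the tropical signs in both sequences, by construction) is immediate, and the equivalence drops out. A clean way to organize this is to prove directly that, for a single $\tau^B_{k,\varepsilon}$, the action on $w = (w_i)$ coincides with the tropical $y$-variable transformation \eqref{eq:trop2} and the action on $(u_i)$ coincides with the tropical transformation of $g$-vectors, both of which are already recorded in \eqref{eq:tau3}, \eqref{eq:tau6}, \eqref{eq:tau5}; then the proposition follows by composing along the sequence and invoking the equivalence \eqref{eq:cg1} established inside the proof of Proposition \ref{prop:trop1}.
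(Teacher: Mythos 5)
Your proposal is correct and takes essentially the same route as the paper: identify \eqref{eq:tau1} as the log-version of \eqref{eq:trop1} (so the $u$-coordinates track the $g$-vectors) and \eqref{eq:tau4} as the log-version of \eqref{eq:trop2} (so the variables $d_ip_i$ track the $c$-vectors), reduce the periodicity of \eqref{eq:tropseq1} to \eqref{eq:cg1} as in the proof of Proposition \ref{prop:trop1}, and invoke Proposition \ref{prop:d1} to pass between $(d_ip_i)$-periodicity and $p$-periodicity. The only caveat is in your final reorganization: you should track $d_ip'_i$ via \eqref{eq:tau4} rather than $w'$ via \eqref{eq:tau3}, since $w$ is a function of $u$ alone and cannot detect the $p$-periodicity --- which is exactly what you do earlier when you match the composite $p$-map with the $c$-vector mutation up to conjugation by $D$.
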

\begin{proof}
(Only-if-part.)
Assume that 
the sequence 
\eqref{eq:tropseq1} is $\sigma$-periodic.
Since \eqref{eq:tau1} is a log-version of \eqref{eq:trop1},
the $\sigma$-periodicity of $u$-variables follows from the
$\sigma$-periodicity of $g$-vectors in Proposition
\ref{prop:trop1}.
Similarly, 
since \eqref{eq:tau4} is a log-version of \eqref{eq:trop2},
the $\sigma$-periodicity of variables $(d_ip_i)_{i=1}^n$ follows from the
$\sigma$-periodicity of $c$-vectors in Proposition
\ref{prop:trop1}.
Then,  the $\sigma$-periodicity of $p$-variables follows
from this using Proposition \ref{prop:d1}.
\par
(If-part.) One can easily convert the above argument.
\end{proof}

Combining Propositions \ref{prop:trop1}, and \ref{prop:period2},
\ref{prop:trop2}, we reach  the following conclusion.

\begin{thm}
The following four conditions are  equivalent to each other:
\begin{itemize}
\item[(a).] The sequence of mutations
\eqref{eq:seq1} is $\sigma$-periodic.
\item[(b).] The sequence of transformations \eqref{eq:tropseq1} is $\sigma$-periodic.
\item[(c).] The sequence of signed mutations
\eqref{eq:seq2}  is  $\sigma$-periodic.
\item[(d).] The sequence of transformations \eqref{eq:tropseq2} is $\sigma$-periodic.
\end{itemize}
\end{thm}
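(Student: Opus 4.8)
The plan is to derive the four-way equivalence by assembling the three pairwise equivalences already proved: Proposition \ref{prop:trop1} (a $\Leftrightarrow$ b), Proposition \ref{prop:period2} (a $\Leftrightarrow$ c), and Proposition \ref{prop:trop2} (b $\Leftrightarrow$ d). Since these three propositions between them connect (a) to (b), (a) to (c), and (b) to (d), the four conditions lie in a single equivalence class, and the theorem follows by transitivity of ``if and only if''. So the proof is essentially a one-line bookkeeping argument, and I would present it as such.

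\begin{proof}
By Proposition \ref{prop:trop1}, (a) and (b) are equivalent. By Proposition \ref{prop:period2}, (a) and (c) are equivalent. By Proposition \ref{prop:trop2}, (b) and (d) are equivalent. Hence all four conditions are equivalent to one another.
\end{proof}

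Strictly speaking there is nothing substantive left to obstruct here: the entire mathematical content has been front-loaded into the three cited propositions (and, through them, into the hard input from \cite{Cao17} used in the if-part of Proposition \ref{prop:trop1}, and into the somewhat lengthy argument deferred to Section \ref{subsec:pr1} for Proposition \ref{prop:period2}). The only thing one must be careful about is that the three propositions are stated for \emph{the same} data --- the same sequence \eqref{eq:seq1}, the same tropical sign sequence $\varepsilon_0,\dots,\varepsilon_{T-1}$, the same permutation $\sigma$, and the same common skew-symmetrizer $D$ (Assumption following \eqref{eq:decom3}) --- so that the biconditionals genuinely chain together; a quick check confirms this is the case. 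If I wanted to make the logical structure visually explicit I would instead prove the cycle (a) $\Rightarrow$ (b) $\Rightarrow$ (d) and (a) $\Leftrightarrow$ (c), but since the referenced results are already stated as biconditionals, the transitivity phrasing above is cleanest and requires no reorganization.
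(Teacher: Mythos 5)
Your proposal is correct and coincides with the paper's own proof, which likewise deduces the theorem by chaining Proposition \ref{prop:trop1} for (a)$\Leftrightarrow$(b), Proposition \ref{prop:period2} for (a)$\Leftrightarrow$(c), and Proposition \ref{prop:trop2} for (b)$\Leftrightarrow$(d).
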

\begin{proof}
We have (a) $\Longleftrightarrow$ (b) by 
Proposition \ref{prop:trop1},
(a) $\Longleftrightarrow$ (c) by 
Proposition \ref{prop:period2},
and
 (b) $\Longleftrightarrow$ (d) by 
Proposition \ref{prop:trop2}.
\end{proof}

%
%We have a  conjecture 
%which states that the tropical part determines the nontropical part.
%This is parallel to the one for the seed mutation.
%See, e.g., \cite[Theorem 2.6]{Nakanishi10c} and the remark after it.
%
%\begin{conj}
%\label{conj:period1}
%The sequence of signed mutations
%\eqref{eq:seq2} is $\sigma$-periodic
%if and only if the tropical periodicity 
%\eqref{eq:period2} holds.
%\end{conj}

\subsection{Proof of Proposition \ref{prop:period2}}
\label{subsec:pr1}

\subsubsection{If-part}
Let us prove the if-part of the proposition, which is easier.
Suppose that
the sequence of signed mutations
\eqref{eq:seq2}  is  $\sigma$-periodic.
By the conditions \eqref{eq:bperiod2}--\eqref{eq:pperiod1}
and Proposition \ref{prop:d1},
the $y$-variables defined by \eqref{eq:y1} 
satisfy the desired $\sigma$-periodicity 
\eqref{eq:yperiod1}  in $\mathbb{R}_+$.
Furthermore,
the initial $y$-variables
$y_1$, \dots, $y_n$
are algebraically independent
in $\mathbb{R}_+$
for a generic choice of  the initial point $(u[0],p[0])$.
  Therefore, the $\sigma$-periodicity \eqref{eq:yperiod1}  holds
in $\mathbb{Q}_+(y)$.

To prove the periodicity of $x$-variables,
we make use of Proposition \ref{prop:trop1}.
As noted in Remark \ref{rem:assum1},
from  the $\sigma$-periodicity \eqref{eq:yperiod1}
of the $y$-variables for the sequence \eqref{eq:seq1},
the $\sigma$-periodicity of 
the sequence of transformations \eqref{eq:tropseq1} holds.
Then, by the if-part of Proposition \ref{prop:trop1},
the $\sigma$-periodicity of the sequence
of mutations \eqref{eq:seq1} holds.

In the rest of this subsection,
we prove the only-if-part of the proposition.

\subsubsection{Hamiltonian point of view}
In our proof,
keeping the Hamiltonian point of view in mind is
very useful.
To make the presentation simple,
we consider the case $T=2$ in \eqref{eq:seq1}.
Although this is a toy example, it fully contains the idea of the proof for the general case.
%Nevertheless,
%the idea of the proof for a general case is fully contained.
To lighten the notation, 
let us abbreviate the flow in the left hand side of the sequence \eqref{eq:period3} as
\begin{align}
\label{eq:seq3}
(u,p)
\buildrel \mu \over \mapsto
(u',p')
\buildrel \mu' \over \mapsto
(u'',p''),
\buildrel \sigma \over \mapsto
(u''',p''').
\end{align}
Using the decomposition \eqref{eq:decom3} with a similar abbreviation,
we write it in the following way:
\begin{align}
\label{eq:flow1}
\xymatrix{
& (\tilde{u}',\tilde{p}')\ar@{|->}[r]^{\tau'}&(u'',p'')\ar@{|->}[r]^{\sigma}&
(u''',p''')
\\
(\tilde{u},\tilde{p})\ar@{|->}[r]^{\tau}&(u',p')
\ar@{|->}[u]^{\rho'}&\\
(u,p) \ar@{|->}[u]^{\rho} &
}
\end{align}
From the Hamiltonian point of view,
 the vertical maps $\rho$ and $\rho'$
 are Hamiltonian flows from $t=0$ to 1 and from $t=1$ to 2
  in the phase space $M$, respectively,
 while the horizontal maps $\tau$, $\tau'$, $\sigma$
 are changes of canonical coordinates of  $M$ so that points  do not move in  $M$.
Let us gather the piecewise Hamiltonian flow from $t=0$ to 2 in the initial chart.
 This can be done by the pull-back along the horizontal arrows
 as follows:
 \begin{align}
\label{eq:flow2}
\xymatrix{
(\tilde{\tilde{u}},\tilde{\tilde{p}})\ar@{|->}[r]^{\tau}& (\tilde{u}',\tilde{p}')\ar@{|->}[r]^{\tau'}&(u'',p'')\ar@{|->}[r]^{\sigma}&
(u''',p''')
\\
(\tilde{u},\tilde{p})\ar@{|->}[r]^{\tau}\ar@{|-->}[u]&(u',p')
\ar@{|->}[u]^{\rho'}&\\
(u,p) \ar@{|->}[u]^{\rho} &
}
\end{align}
 
The $\sigma$-periodicity  \eqref{eq:period3},
which we are going to show, states that
the points $(u,p)$ and $(u''',p''')$ coincide, but this coincidence happens
in different charts.
On the other hand, the tropical periodicity of
\eqref{eq:period2}
guaranteed by
Proposition \ref{prop:trop2} means that
\begin{align}
\sigma\circ \tau' \circ \tau = \mathrm{id}.
\end{align}
Thus, we have
\begin{align}
\label{eq:pb1}
(\tilde{\tilde{u}},\tilde{\tilde{p}})=(u''',p''').
\end{align}
Therefore, the $\sigma$-periodicity  \eqref{eq:period3}
is equivalent to the equality
in the initial chart,
\begin{align}
(\tilde{\tilde{u}},\tilde{\tilde{p}})=(u,p).
\end{align}
In other words, {\em the flow is periodic in the phase space $M$}.
We will show this separately for $u$- and $p$-variables.

\subsubsection{Periodicity of $p$-variables}
We start with $p$-variables.
Consider
\begin{align}
\begin{split}
\Delta p :=&\ \tilde{\tilde{p}} - p
=(\tilde{\tilde{p}} -\tilde{p})+(\tilde{p}- p)
\\
 =&\ \tau^{-1}(\tilde{p}'-p') + (\tilde{p}-p).
 \end{split}
\end{align}
By \eqref{eq:Hp2}, we have
\begin{align}
\label{eq:du1}
\tilde{p}_i-p_i&=-\frac{1}{2d_i}b_{ki} \log(1+y_k^{\varepsilon}),
\\
\label{eq:du2}
\tilde{p}'_i-p'_i&=-\frac{1}{2d_i}b'_{k'i} \log(1+y'_{k'}{}^{\varepsilon'}),
\end{align}
where we keep the same system of abbreviation.
Recall that $y$-variables here  are defined by
\begin{align}
\label{eq:yd1}
y_i =e^{d_i p_i+ w_i},
\quad
y'_i =e^{d_i p'_i+ w'_i},
\end{align}
and they obey the mutation rule \eqref{eq:My3}.
In particular, it is uniquely determined by the initial $y$-variables $y_i$.

Our goal is to show that $\Delta p=0$.
For this purpose, we compare the above flow of $p$-variables with
the (logarithm of) $y$-variables in the sequence of  mutations
\eqref{eq:seq1}.
In the same spirit of \eqref{eq:flow2} we write a diagram
for the sequence \eqref{eq:seq1},
 \begin{align}
\label{eq:flow3}
\xymatrix{
(\tilde{\tilde{x}},\tilde{\tilde{y}})\ar@{|->}[r]^{\tau}& (\tilde{x}',\tilde{y}')\ar@{|->}[r]^{\tau'}&(x'',y'')\ar@{|->}[r]^{\sigma}&
(x''',y''')
\\
(\tilde{x},\tilde{y})\ar@{|->}[r]^{\tau}\ar@{|-->}[u]&(x',y')
\ar@{|->}[u]^{\rho'}&\\
(x,y) \ar@{|->}[u]^{\rho} &
}
\end{align}
Let us set $v_i=\log y_i /d_i$, $v'_i=\log y'_i / d_i$, and so on.
%{\bf
Here, $\log y_i $ ($y_i \in \mathbb{Q}_+(y)$) is a formal notation such that
the multiplication in $\mathbb{Q}_+(y)$ is written additively.
In this notation the linear aspect of the transformation $\tau_{k,\varepsilon}^{B}$ in
\eqref{eq:tau7} is more transparent.
%}
Note that we have $v'''_i=v''_{\sigma^{-1}(i)}$ thanks to
 Proposition \ref{prop:d1}.
Then, we have
\begin{align}
\begin{split}
\Delta v :=&\ \tilde{\tilde{v}} - v
=(\tilde{\tilde{v}} -\tilde{v})+(\tilde{v}- v)
\\
 =&\ \tau^{-1}(\tilde{v}'-v') + (\tilde{v}-v).
 \end{split}
\end{align}
By \eqref{eq:auto3}, we have
\begin{align}
\label{eq:dv1}
\tilde{v}_i-v_i&=-\frac{1}{d_i}b_{ki} \log(1+y_k^{\varepsilon}),
\\
\label{eq:dv2}
\tilde{v}'_i-v'_i&=-\frac{1}{d_i}b'_{k'i} \log(1+y'_{k'}{}^{\varepsilon'}).
\end{align}
Let us compare them
with  \eqref{eq:du1} and \eqref{eq:du2}.
Note that
the $y$-variables here are elements in $\mathbb{Q}_+(y)$,
and they are different from the ones for   \eqref{eq:du1} and \eqref{eq:du2}.
However, they mutate by the  rule \eqref{eq:ymut1},
which is the same rule as for the $y$-variables in  \eqref{eq:flow3}.
Moreover,  the initial $y$-variables $y_i$ in \eqref{eq:seq1}  are the generators
of $\mathbb{Q}_+(y)$,
which  are formal (algebraically independent) variables.
Therefore, one can specialize them arbitrarily in $\mathbb{R}_+$,
so that they exactly match the initial $y$-variables for \eqref{eq:flow3}.
Under this specialization, we
have
\begin{align}
\label{eq:pv1}
\Delta p =\frac{1}{2} \Delta v,
\end{align}
where  we also  used the fact that $\tau$ is a linear transformation.
On the other hand, by the periodicity assumption
we have $ \Delta v=0$.
Therefore, $ \Delta p=0$ holds.

\subsubsection{Periodicity of $u$-variables}
%One can repeat the same argument of
%comparing the flows of the variables $d_ip_i$  
%in \eqref{eq:flow2} and the (logarithm of)
%$y$-variables in \eqref{eq:flow3}.
%The only subtlety here is that  when we show that
%$d_i{\tilde{\tilde{p_i}}}=d_i p'''_i$,
%which is parallel to \eqref{eq:pb1},
%we need  the equality
%$ d_i p_i''' = d_{\sigma^{-1}(i)}p_{\sigma^{-1}(i)}$.
%But, this is guaranteed by
%Proposition \ref{prop:d1}.
%Then, from the periodicity of $d_ip_i$,
%we obtain the periodicity of $p$-variables.

Due to the relation \eqref{eq:y1},
the periodicities of  $y$- and $p$-variables imply
the same periodicity of  $w$-variables.
Therefore, when the matrix $B[0]$ is {\em invertible},
the periodicity of $u$-variables immediately follows.
This reasoning, however, is not applicable when the matrix $B[0]$ is
not invertible.
Therefore, we prove the claim directly by comparing the mutations
of $u$- and $x$-variables in a similar way to the previous case.
The proof is parallel, but it requires some extra argument.

Consider
\begin{align}
\begin{split}
\Delta u :=&\ \tilde{\tilde{u}} - u
=(\tilde{\tilde{u}} -\tilde{u})+(\tilde{u}- u)
\\
 =&\ \tau^{-1}(\tilde{u}'-u') + (\tilde{u}-u),
 \end{split}
\end{align}
where, by \eqref{eq:Hu2},
\begin{align}
\label{eq:du3}
\tilde{u}_i-u_i&=-\frac{1}{2}\delta_{ki} \log(1+y_k^{\varepsilon}),
\\
\label{eq:du4}
\tilde{u}'_i-u'_i&=-\frac{1}{2}\delta_{k'i} \log(1+y'_{k'}{}^{\varepsilon'})
\end{align}
for the same $y_i$ and $y'_i$ in \eqref{eq:yd1}.

Let us compare the  flow of $u$-variables with
the (logarithm of) $x$-variables in the sequence of  mutations
\eqref{eq:flow3}.
Again, let us introduce formal logarithms,
 $z_i=\log x_i$, $z'_i=\log x'_i$, etc.,
 to write
the multiplication in $\mathcal{F}_{\mathbb{Q}_+(y)}$ in the additive way.
Then, we have
\begin{align}
\label{eq:dz0}
\begin{split}
\Delta z :=&\ \tilde{\tilde{z}} - z
=(\tilde{\tilde{z}} -\tilde{z})+(\tilde{z}- z)
\\
 =&\ \tau^{-1}(\tilde{z}'-z') + (\tilde{z}-z),
 \end{split}
\end{align}
where, by \eqref{eq:auto2},
\begin{align}
\label{eq:dz1}
\tilde{z}_i-z_i&=-\delta_{ki} \log(1+\hat{y}_k^{\varepsilon})
+\delta_{ki} \log(1\oplus {y}_k^{\varepsilon}),
\\
\label{eq:dz2}
\tilde{z}'_i-z'_i&=-\delta_{k'i} \log(1+\hat{y}'_{k'}{}^{\varepsilon'})
+\delta_{k'i} \log(1\oplus {y}'_{k'}{}^{\varepsilon'}).
\end{align}
It follows that we have the following expression of $\Delta z_i
:= \tilde{\tilde{z}}_i - z_i$,
\begin{align}
\label{eq:dz3}
\Delta z_i
= - \log f_i(\hat{y}) + \log f_i(y),
\end{align}
where $f_i(y)\in \mathbb{Q}_+(y)
$ is a rational function of 
the initial $y$-variables $y$,
and $f_i(\hat{y})\in \mathcal{F}_{\mathbb{Q}_+(y)}$ is
the one obtained from $f_i(y)$ by replacing
$y$ with the initial $\hat{y}$-variables $\hat{y}$
and also replacing
the addition in $\mathbb{Q}_+(y)$
with the one in $\mathcal{F}_{\mathbb{Q}_+(y)}$.
In the same way as \eqref{eq:pv1},
we have
\begin{align}
\label{eq:dz4}
\Delta u_i
:= \tilde{\tilde{u}}_i - u_i =- \frac{1}{2} \log f_i(y),
\end{align}
under some specialization of $y$ in the right hand side.

Now we set $\Delta z_i = 0$ for all $i$ by the assumption of the periodicity of $x$-variables.
This is equivalent to the equality $f_i(y)=f_i(\hat y)$ as elements of $\mathcal{F}_{\mathbb{Q}_+(y)}$.
We first claim that
 $f_i(y)$ is a {\em Laurent monomial  in $y$},
possibly with some coefficients in $\mathbb{Q}_+$.
In fact, if $f_i(y)$ is not a Laurent monomial,
 then it includes the addition in $\mathbb{Q}_+(y)$.
It follows that
 $ f_i(\hat{y})$ includes the addition in $\mathcal{F}_{\mathbb{Q}_+(y)}$.
 However,
 the addition in $\mathcal{F}_{\mathbb{Q}_+(y)}$ is an operation outside 
 of
  $\mathbb{Q}_+(y)$.
Thus, $ f_i(\hat{y})$ is not an  element in $\mathbb{Q}_+(y)
\subset \mathcal{F}_{\mathbb{Q}_+(y)}$.
 In particular, the equality $f_i(y)=f_i(\hat y)$ could never occur.
We next claim that actually we have
\begin{align}
f_i(y)=1.
\end{align}
To see it, we consider the limit $y_i\rightarrow 0$ for all $i$.
Then, thanks to the definition of the tropical sign,
we have $y_k^{\varepsilon}, y'_{k'}{}^{\varepsilon'}
\rightarrow 0$.
Thus, by \eqref{eq:dz0}--\eqref{eq:dz3},
we have $ \log f_i(y)\rightarrow 0$. Therefore, $f_i(y)$=1.
Thus, we conclude that $\Delta u_i =0$ by \eqref{eq:dz4}.

%Since the time one flow is a canonical transformation,
%we have
%\begin{align}
%\{ \tilde{\tilde{p}}_i,\tilde{\tilde{u}}_j\}=\delta_{ij},
%\quad
%\{ \tilde{\tilde{u}}_i,\tilde{\tilde{u}}_j\}=
%\{ \tilde{\tilde{p}}_i,\tilde{\tilde{p}}_j\}=0
%\end{align}
%for variables $\tilde{\tilde{u}}$ and $\tilde{\tilde{p}}$ defined by
%\eqref{eq:flow2}.
%Then, from the periodicity of $p$-variables,
%we obtain the periodicity of $u$-variables ${\tilde{\tilde{u_i}}}=u_i
%+c_i$ up to constants $c_i$.
%To determine the constants,
%we consider the limit $p_i\rightarrow -\infty$ for all $i$.
%Then, by \eqref{eq:yd1}, we have $y_i\rightarrow 0$;
%moreover, by the definition of the tropical sign,
%we have $y_k^{\varepsilon}, y'_{k'}{}^{\varepsilon'}
%\rightarrow 0$.
%Thus, $\tilde{u}-u, \tilde{u}'-u' \rightarrow 0$;
%therefore, $\Delta u =\tilde{\tilde{u}}-u \rightarrow 0$.
%Thus, $c_i=0$.

This completes the proof of Proposition \ref{prop:period2}.

\section{Dilogarithm identities and action integral}
\subsection{Dilogarithm identities}

The following theorem was proved in \cite{Nakanishi10c} by a
cluster algebraic method
with the help of the {\em constancy condition} from \cite{Frenkel95}. 
See also \cite{Nakanishi16}.
\begin{thm}
[{Dilogarithm identity \cite[Theorems 6.4 and 6.8]{Nakanishi10c}}]
\label{thm:DI}
Suppose that the sequence of mutations
\eqref{eq:seq1} is $\sigma$-periodic.
Let $\varepsilon_0,\dots,\varepsilon_{T-1}$
 be the tropical sign sequence of \eqref{eq:seq1}.
 Let $D=\mathrm{diag}(d_1,\dots,d_n)$ be any 
 skew-symmetrizer of the initial matrix $B$ in \eqref{eq:seq1}.
Then, the following identity of the Rogers dilogarithm $\tilde{L}(x)$ 
 in \eqref{eq:L4} holds:
\begin{align}
\label{eq:di1}
\sum_{s=0}^{T-1}
\frac{\varepsilon_s}{d_{k_s}}
\tilde{L}(y_{k_s}^{\varepsilon_s}[s])
= 0,
\end{align}
where $y_{k_s}[s]$ are evaluated by any
semifield homomorphism
\begin{align}
\label{eq:ev1}
\mathrm{ev}_y:
\mathbb{Q}_+(y)
\rightarrow 
\mathbb{R}_+.
\end{align}
\end{thm}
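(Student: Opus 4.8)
The plan is to read the left-hand side of \eqref{eq:di1} as (twice) the action integral of the singular Lagrangian of Section 4 along the closed, piecewise-Hamiltonian trajectory realizing the periodicity of \eqref{eq:seq1}, and then to show that this action vanishes. First I would pass from the $\sigma$-periodic sequence of seed mutations \eqref{eq:seq1} to the $\sigma$-periodic sequence of \emph{signed} mutations \eqref{eq:seq2}, taking for the signs the tropical sign sequence $\varepsilon_0,\dots,\varepsilon_{T-1}$: this is exactly Proposition \ref{prop:period2} (Fact 2), and it gives
\begin{align}
\sigma\circ\mu^{B[T-1]}_{k_{T-1},\varepsilon_{T-1}}\circ\cdots\circ\mu^{B[0]}_{k_0,\varepsilon_0}=\mathrm{id}
\end{align}
on $\mathbb{R}^{2n}$. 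Decomposing each signed mutation as $\mu^{B[s]}_{k_s,\varepsilon_s}=\tau^{B[s]}_{k_s,\varepsilon_s}\circ\rho^{B[s]}_{k_s,\varepsilon_s}$, the maps $\rho^{B[s]}_{k_s,\varepsilon_s}$ are the time-one flows of $H^{B[s]}_{k_s,\varepsilon_s}$, while the $\tau^{B[s]}_{k_s,\varepsilon_s}$ and $\sigma$ are linear changes of Darboux coordinates under which points of $M$ do not move. Pulling everything back to the initial chart, as in the proof of Proposition \ref{prop:period2}, the composite exhibits a \emph{closed} curve $\gamma$ in $M$, traversed for $t\in[0,T]$, along which the governing Hamiltonian is $H^{B[s]}_{k_s,\varepsilon_s}$ on each interval $[s,s+1]$.

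Next I would compute the action $S=\int_0^T\mathscr{L}\,dt$ of the corresponding piecewise Lagrangian along $\gamma$, choosing $\gamma$ to start (hence, by Proposition \ref{prop:small2} and \eqref{eq:sp2}, to stay) on the small phase space $M_0$, with its initial point chosen so that the initial $y$-variables equal $\mathrm{ev}_y(y_1),\dots,\mathrm{ev}_y(y_n)$. On $[s,s+1]$ the Hamiltonian $H^{B[s]}_{k_s,\varepsilon_s}$ depends only on $y_{k_s}$, and $y_{k_s}$ and $p_{k_s}$ are both conserved along the flow; hence $\mathscr{L}^{B[s]}_{k_s,\varepsilon_s}$ is constant there (Fact 3, Proposition \ref{prop:const1}), with value $\tfrac{\varepsilon_s}{2d_{k_s}}\tilde L(y_{k_s}^{\varepsilon_s}[s])$ on $M_0$ by Fact 1 (Proposition \ref{prop:Leg2}). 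Since the $y$-variables along $\gamma$ obey the rule \eqref{eq:My3}, which is \eqref{eq:ymut1} with $\mathbb{P}=\mathbb{R}_+$, they agree with the $\mathrm{ev}_y$-evaluations of the $y$-variables in \eqref{eq:seq1}, so integrating over $[0,T]$ yields
\begin{align}
S&=\sum_{s=0}^{T-1}\frac{\varepsilon_s}{2d_{k_s}}\tilde L(y_{k_s}^{\varepsilon_s}[s])
=\frac12\sum_{s=0}^{T-1}\frac{\varepsilon_s}{d_{k_s}}\tilde L(y_{k_s}^{\varepsilon_s}[s]).
\end{align}
It remains to show $S=0$. Here I would invoke Fact 4 (Theorem \ref{thm:const1}): the signed-mutation sequence is $\sigma$-periodic for every initial point of $\mathbb{R}^{2n}$ (Proposition \ref{prop:period2}) and the signs $\varepsilon_s$ are fixed combinatorial data, so the family of flows obtained by moving the initial point of $\gamma$ within $M_0$ is periodic over $[0,T]$, whence $S$ is independent of that initial point. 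Letting all initial $y$-variables tend to $0$, sign-coherence of $c$-vectors (Theorem \ref{thm:sign1}) forces $y_{k_s}^{\varepsilon_s}[s]\to0$ for each $s$ (if $\varepsilon_s=+$ then $y_{k_s}[s]\to0$, and if $\varepsilon_s=-$ then $y_{k_s}[s]\to\infty$), so $S\to\sum_s\tfrac{\varepsilon_s}{2d_{k_s}}\tilde L(0)=0$; being constant, $S=0$, which is \eqref{eq:di1}.

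The step I expect to be the main obstacle is matching an \emph{arbitrary} semifield homomorphism $\mathrm{ev}_y$ with an initial point of $M_0$: on $M_0$ one has $y_i=\hat y_i=\prod_{j}x_j^{b_{ji}}=\exp\bigl(2\sum_j b_{ji}u_j\bigr)$, so as the initial point ranges over $M_0$ the initial $y$-tuple sweeps out only the subtorus $\exp(\mathrm{im}\,B^{T})\subseteq\mathbb{R}_+^n$, which is all of $\mathbb{R}_+^n$ precisely when $B$ is invertible. To remove this restriction I would enlarge the initial exchange matrix to $\widetilde B=\bigl(\begin{smallmatrix}B&-I\\ I&0\end{smallmatrix}\bigr)$, which is skew-symmetrizable by $\mathrm{diag}(D,D)$ and invertible, run the same mutation sequence in the first $n$ indices only --- this leaves the mutation dynamics of $y_1,\dots,y_n$ and the signs $\varepsilon_s$, hence all summands in \eqref{eq:di1}, unchanged, while freeing the initial $y$-variables --- and then specialize the resulting identity. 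A secondary point to keep an eye on is that the hypothesis of Theorem \ref{thm:const1} (all flows in the family periodic over $[0,T]$, with the tropical parts held fixed) is exactly what Proposition \ref{prop:period2} provides, and that the canonical $1$-form $\sum_i p_i\,du_i$ implicit in the Legendre picture is consistent across the chart changes, which rests on $\sum_i u'_ip'_i=\sum_i u_ip_i$ from Proposition \ref{prop:tau1}(1) together with the analogous identity for $\sigma$.
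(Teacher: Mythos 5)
Your overall route is the paper's own: reduce to $\sigma$-periodicity of the signed mutations via Proposition \ref{prop:period2}, use Propositions \ref{prop:const1} and \ref{prop:Leg2} to identify the action of a flow starting on $M_0$ with $\tfrac12\sum_{s}\frac{\varepsilon_s}{d_{k_s}}\tilde L(y_{k_s}^{\varepsilon_s}[s])$, and use Theorem \ref{thm:const1} (constancy of $S$ in the flow, plus the limit in which all initial $y$-variables tend to $0$, where sign-coherence forces each $y_{k_s}^{\varepsilon_s}[s]\to 0$) to conclude $S=0$; this is exactly the content of Theorems \ref{thm:const2} and \ref{thm:const3}. You also correctly isolate the genuine obstacle: on $M_0$ the initial $y$-tuple ranges only over $\exp(\mathrm{im}\,B^{T})$, so this argument gives the identity for an arbitrary $\mathrm{ev}_y$ only when $B$ is invertible.

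Your fix for non-invertible $B$, however, has a gap. Replacing $B$ by the invertible extension $\widetilde B$ and ``running the same mutation sequence'' does guarantee that the entries $b_{ij}[s]$ with $i,j\le n$, the variables $y_1[s],\dots,y_n[s]$, and the tropical signs $\varepsilon_s$ evolve exactly as before, so the \emph{identity to be proved} is unchanged; but it does not give the $\sigma$-periodicity of the \emph{extended} sequence, which is what you must have in order to apply Proposition \ref{prop:period2} and Theorem \ref{thm:const1} to the extended system (their hypotheses concern the full signed-mutation sequence on $\mathbb{R}^{4n}$, equivalently the full extended seed). The new rows and columns of the exchange matrix, the $x$-variables (whose exchange relation involves all $2n$ indices through $\hat y_k$ and the monomial $\prod_j x_j^{[-\varepsilon b_{jk}]_+}$), and the $y$-variables with indices $>n$ all change along the sequence, and their return to the initial data is precisely the Extension Theorem (Theorem \ref{thm:ext1}), which is nontrivial: the paper proves it using sign-coherence, the $c$--$g$ vector duality, and the Cao--Li periodicity of $F$-polynomials entering through Proposition \ref{prop:trop1}. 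It is not a formal consequence of the coincidence of the restricted dynamics. Once you invoke Theorem \ref{thm:ext1} (your extension $\widetilde B=\bigl(\begin{smallmatrix} B & -I\\ I & 0\end{smallmatrix}\bigr)$ with skew-symmetrizer $\mathrm{diag}(D,D)$ is a perfectly good invertible extension, differing only cosmetically from the paper's), the rest of your argument closes the proof exactly as in Theorem \ref{thm:equiv1}.
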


Below, we give an alternative proof of 
%a slightly weaker version of
 the theorem,
based on the Hamiltonian/Lagrangian picture presented in the current paper.
%As in Section \ref{subsec:pr1},
%we only consider the case $M=2$,
%since the idea of the proof for a general case is fully contained.

\subsection{Action integral}
%We consider the sequence \eqref{eq:seq2} with $M=2$,
%and  use the same system of  abbreviation in
%\eqref{eq:flow1}.
 
We consider
 the  sequence of signed mutations \eqref{eq:seq2}.

For each time span $[s,s+1]$ ($s=0,\dots,T-1$),
we have the Hamiltonian
\begin{align}
\label{eq:Hs1}
H[s]&=\frac{\varepsilon_s}{2d_k} \mathrm{Li}_2(-y_{k_s}[s]^{\varepsilon_s})
\quad
\text{for $[s,s+1]$}
\end{align}
in the $s$-th canonical coordinates $(u[s],p[s])$.
A Hamiltonian flow from time 0 to $T$ is a piecewise linear movement.
A  schematic diagram of a Hamiltonian flow is depicted in 
Figure \ref{fig:ham1} for $T=2$.
Let 
\begin{align}
\label{eq:lag4}
\mathscr{L}[s]
&=  \dot{u}_{k_s}[s] p_{k_s}[s] - H[s]
\quad
\text{for $ [s,s+1]$}
\end{align}
be the corresponding singular Lagrangian 
 from \eqref{eq:lag2}.

\begin{figure}
\begin{center}
\includegraphics{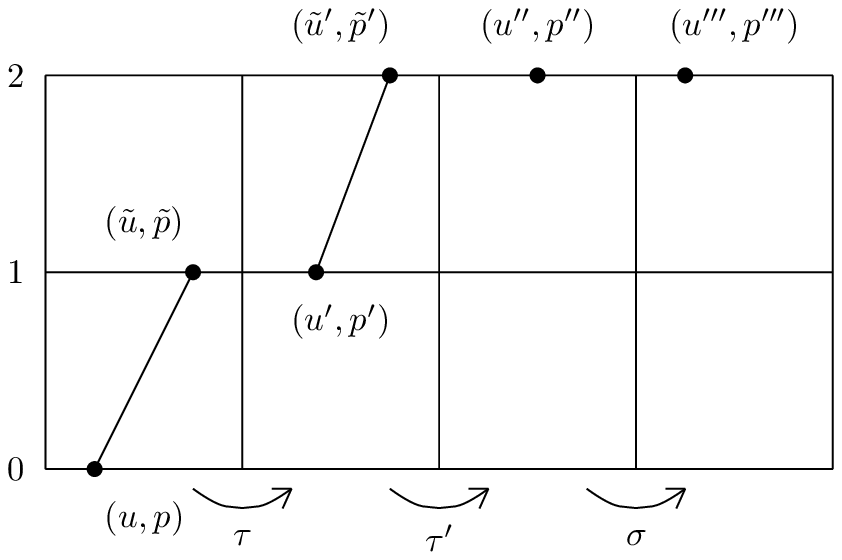}
\end{center}
\caption{Schematic diagram of a Hamiltonian flow for $T=2$,
where for simplicity
we use the same abbreviation as
in \eqref{eq:flow1}.
}
\label{fig:ham1}
\end{figure}

Let us consider the action integral ${S}$
along a Hamiltonian flow,
\begin{align}
\label{eq:Ihat1}
{S}&=\sum_{s=0}^{T-1} S[s],
\\
\label{eq:Ihat2}
S[s]&=
\int_s^{s+1}
\mathscr{L}[s](u(t),\dot{u}(t)) dt,
\end{align}
where $u(t)$ in \eqref{eq:Ihat2} is (the $u$-part of) a Hamiltonian flow in the $s$-th coordinates $(u[s],p[s])$.

%\begin{rem}
%Here we exclusively consider the action integrals
%${S}$
%along the Hamiltonian flow.
%\end{rem}

Here is the first key observation.
\begin{prop}
\label{prop:const1}
The value of the Lagrangian
\eqref{eq:lag4} 
along a Hamiltonian flow
is constant in $t$ in each time span $[s,s+1]$.
Thus, we have
\begin{align}
\label{eq:hi1}
{S}=
\sum_{s=0}^{T-1}
\mathscr{L}[s].
\end{align}
\end{prop}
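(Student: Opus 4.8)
The plan is to show that the Lagrangian $\mathscr{L}[s]$ is constant along any Hamiltonian flow of $H^B_{k,\varepsilon}$ by observing that it only depends on $y_k$, which is itself conserved. First I would recall from Proposition~\ref{prop:Hs1}(2) that $\dot y_k(t)=0$ along the flow, since $H^B_{k,\varepsilon}$ depends on the canonical variables only through $y_k=\exp(d_kp_k+w_k)$ (this is also why $p_k$ in Proposition~\ref{prop:int1} is an integral of motion). Then I would look at the explicit form of the Lagrangian. From \eqref{eq:lag2} we have
\begin{align}
\mathscr{L}^B_{k,\varepsilon}(u,p)
= -\frac{1}{2}\log(1+y_k^{\varepsilon})\,p_k
-\frac{\varepsilon}{2d_k}\mathrm{Li}_2(-y_k^{\varepsilon}),
\end{align}
so the only potentially time-dependent factors are $y_k(t)$ and $p_k(t)$. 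The first is constant by the above.

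For $p_k(t)$, I would invoke Proposition~\ref{prop:Hs1}: equation \eqref{eq:Hp1} gives $\dot p_k(t)=-\frac{1}{2d_k}b_{kk}\log(1+y_k^{\varepsilon})$, and since $b_{kk}=0$ for a skew-symmetrizable matrix, $\dot p_k(t)=0$ as well. (Equivalently this is the statement $\dot p_k=0$ already recorded in \eqref{eq:Hu4}, or the fact that $p_k$ is an integral of motion from Proposition~\ref{prop:int1}.) Hence every quantity appearing in $\mathscr{L}^B_{k,\varepsilon}$ is constant in $t$ throughout the time span $[s,s+1]$, so $\mathscr{L}[s](u(t),\dot u(t))$ is constant there; call its value $\mathscr{L}[s]$.

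Given this, the evaluation of the action integral is immediate: $S[s]=\int_s^{s+1}\mathscr{L}[s](u(t),\dot u(t))\,dt=\mathscr{L}[s]$, and summing over $s$ yields $S=\sum_{s=0}^{T-1}\mathscr{L}[s]$, which is \eqref{eq:hi1}. The only mild subtlety — and the point I would make carefully — is that the Lagrangian in \eqref{eq:lag4} is a priori a function of $(u,\dot u)$ obtained from \eqref{eq:lag2} through the (partial) Legendre inversion described in Section~\ref{subsec:lag1}; but since along a genuine Hamiltonian flow the pair $(u(t),\dot u(t))$ comes from an actual solution $(u(t),p(t))$, evaluating $\mathscr{L}[s]$ along the flow coincides with evaluating the expression \eqref{eq:lag2} along that solution, and the argument above applies verbatim. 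I do not expect any real obstacle here — the proposition is essentially the observation that the Hamiltonian, hence the Lagrangian built from it, is a function of the single conserved quantity $y_k$, so the main work is just assembling \eqref{eq:Hp1} with $b_{kk}=0$ and Proposition~\ref{prop:Hs1}(2).
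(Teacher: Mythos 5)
Your proof is correct and follows essentially the same route as the paper: the paper writes $\mathscr{L}[s]=\dot{u}_{k_s}p_{k_s}-H[s]$ and notes $H[s]$ is conserved while $\ddot{u}_{k_s}=\dot{p}_{k_s}=0$ by Proposition \ref{prop:Hs1}, which is the same computation as your observation that $\mathscr{L}$ depends only on the conserved quantities $y_{k_s}$ and $p_{k_s}$ (the latter constant because $b_{kk}=0$). The integration step $S[s]=\mathscr{L}[s]$ is then immediate in both arguments.
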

\begin{proof}
Since the Hamiltonian is constant along the flow,
it is enough to show that 
the term  $\dot{u}_{k_s}[s] p_{k_s}[s] $ is constant.
This is true
since
we have $\ddot{u}_{k_s}[s]=\dot{p}_{k_s}[s]=0$
by Proposition \ref{prop:Hs1}.
\end{proof}

\subsection{Invariance of action integral}
Our next key observation
is as follows.

\begin{thm}
\label{thm:const1}
Suppose that the sequence of signed mutations
\eqref{eq:seq2} is $\sigma$-periodic.
Then, for any Hamiltonian flow,
\begin{align}
 {S}=0.
 \end{align}
 \end{thm}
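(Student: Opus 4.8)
The plan is to use Proposition~\ref{prop:const1}, which already reduces $S$ to the finite sum of constants $S=\sum_{s=0}^{T-1}\mathscr{L}[s]$ (see \eqref{eq:hi1}), and then to prove two things: (i) the value of $S$ is independent of the chosen Hamiltonian flow, i.e.\ of the initial point $(u[0],p[0])\in\mathbb{R}^{2n}$ of \eqref{eq:seq2}; and (ii) $S$ tends to $0$ along a family of flows degenerating to the tropical limit. Since $\mathbb{R}^{2n}$ is connected, (i) and (ii) force $S\equiv 0$. This is the finite-time converse of Noether's theorem alluded to in the introduction.

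For (i) I would argue in the first-order (Hamiltonian) variational picture. Write $S=\sum_{s}\int_s^{s+1}\big(\sum_i p_i[s]\,\dot u_i[s]-H[s]\big)\,dt$, using on the span $[s,s+1]$ the canonical chart $(u[s],p[s])$ and the Hamiltonian $H[s]$ of \eqref{eq:Hs1}. Varying the trajectory generated by a variation of the initial point and integrating by parts on each span, $\delta S$ becomes a bulk integral plus the boundary terms $\big[\sum_i p_i[s]\,\delta u_i[s]\big]_s^{s+1}$. The bulk integral vanishes because the trajectory solves Hamilton's equations --- it is the Hamiltonian flow by construction --- so that the defect of the singular Lagrangian recorded in Proposition~\ref{prop:le1} never enters: only Hamilton's equations are invoked. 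The two boundary contributions at each internal time $s=1,\dots,T-1$ cancel, because the chart change relaying span $s-1$ to span $s$ is the tropical transformation $\tau^{B[s-1]}_{k_{s-1},\varepsilon_{s-1}}$, which is linear with matrices satisfying $N^{T}M=I$ (see the proof of Proposition~\ref{prop:tau1}), and such a map preserves the canonical $1$-form $\theta=\sum_i p_i\,du_i$; so does the permutation $\sigma$. Thus $\delta S$ collapses to the contributions at $t=0$ and $t=T$. These cancel in turn because, under the $\sigma$-periodicity of \eqref{eq:seq2} and the ensuing tropical periodicity of \eqref{eq:tropseq2} (Propositions~\ref{prop:period2} and \ref{prop:trop2}), the composite of the signed mutations followed by $\sigma$ is the identity, hence the trajectory closes up into a loop in the phase space $M$ whose endpoint depends on the initial point exactly as the initial point itself (this is the periodicity in $M$ isolated in the proof of Proposition~\ref{prop:period2}); therefore $\theta$ is evaluated on the same tangent vector at $t=T$ and at $t=0$. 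So $\delta S=0$; since the flow is defined for all $t$ (its generator depends only on $y_{k_s}[s]$, constant along the flow by Proposition~\ref{prop:Hs1}(2)) and depends smoothly on $(u[0],p[0])$, $S$ is a constant function on $\mathbb{R}^{2n}$.

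For (ii) I would evaluate this constant along the family $u[0]=0$, $p_i[0]=\lambda$ $(i=1,\dots,n)$ with $\lambda\to-\infty$, so that $y_i[0]=e^{d_i\lambda}\to 0^{+}$. By the sign-coherence of $c$-vectors (Theorem~\ref{thm:sign1}) and the definition of the tropical sign sequence, each $Y_s:=y_{k_s}[s]^{\varepsilon_s}$ also tends to $0$: the $y$-variable $y_{k_s}[s]$ is a Laurent monomial in the initial $y$'s with exponent vector $c_{k_s}[s]$, times a ratio of $F$-polynomials with constant term $1$, so $y_{k_s}[s]^{\varepsilon_s}$ has the nonnegative nonzero leading exponent vector $\varepsilon_s c_{k_s}[s]$; this is exactly the limit already used in the proof of Proposition~\ref{prop:period2}. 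Along the flow the nontropical corrections are of order $\log(1+Y)$ and the tropical parts are linear, so $u[s]\to 0$, whence $w_{k_s}[s]\to 0$ and, by \eqref{eq:pk1}, $p_{k_s}[s]=d_{k_s}^{-1}\big(\varepsilon_s\log Y_s-w_{k_s}[s]\big)$. Combining \eqref{eq:lag4}, \eqref{eq:Hu1} and \eqref{eq:Hk1},
\[
\mathscr{L}[s]=-\tfrac{1}{2}\log(1+Y_s)\,p_{k_s}[s]-\tfrac{\varepsilon_s}{2d_{k_s}}\,\mathrm{Li}_2(-Y_s),
\]
which tends to $0$ because $\mathrm{Li}_2(-Y)\to 0$, $\log(1+Y)\sim Y$, and $Y\log Y\to 0$ as $Y\to 0^{+}$. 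Hence $S=\sum_s\mathscr{L}[s]\to 0$, and by (i) the constant value of $S$ equals $0$.

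The main obstacle is step (i): making the variational computation rigorous in the presence both of the singular Lagrangian and, especially, of the chart changes --- that is, checking that the $\sum_i p_i\,\delta u_i$ contributions of consecutive spans really cancel. This hinges on the invariance of $\theta$ under the (linear) tropical transformations and under $\sigma$, and on the loop-closing of the flow in $M$ taken from the proof of Proposition~\ref{prop:period2}; the remaining steps are routine. As a byproduct, restricting $S=0$ to the small phase space $M_0$ and using Proposition~\ref{prop:Leg2} gives back the dilogarithm identity \eqref{eq:di1}.
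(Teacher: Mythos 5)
Your proposal is correct and follows essentially the same route as the paper: the first variation of $S$ over the family of Hamiltonian flows reduces to boundary terms, which cancel across chart changes because the tropical transformations satisfy $N^TM=I$ (Proposition \ref{prop:tau1}~(1)) and cancel at the endpoints by the $\sigma$-periodicity hypothesis, after which the constant value is evaluated as $0$ in the limit where all initial $y$-variables tend to $0$ and each $y_{k_s}[s]^{\varepsilon_s}\to 0$ by the definition of the tropical sign. The only (harmless) difference is in how the per-span boundary formula of Lemma \ref{lem:DI1} is obtained: you use the phase-space action $\int\bigl(\sum_i p_i\dot u_i-H[s]\bigr)dt$, which agrees with $S[s]$ on Hamiltonian flows since $\dot u_i=0$ for $i\neq k_s$, so the bulk term vanishes by Hamilton's equations alone, whereas the paper computes directly with the singular Lagrangian, using the valid Euler--Lagrange equation only for $i=k_s$ and the explicit expression \eqref{eq:le4} together with \eqref{eq:Hdp2} for $i\neq k_s$.
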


The rest of this subsection is devoted to a proof of this theorem.
%As in Section \ref{subsec:pr1},
%we only consider the case $M=2$,
%since the idea of the proof for a general case is fully contained.
%We  use the same system of  abbreviation in
%\eqref{eq:flow1},
%such as $I[1]=I, I[2]=I'$, and so on.

First we show that the value ${S}={S}(u)$ is independent of 
a Hamiltonian flow $u(t)$,
using the standard variational calculus for a Lagrangian.
However, we have to be  careful because the Lagrangian here
is singular as discussed in Section \ref{subsec:lag1}.

For a given Hamiltonian flow $u(t)$, we consider
an infinitesimal variation $u(t)+\delta u(t)$, {\em which is also assumed to be 
a Hamiltonian flow}.
Let 
\begin{align}
\label{eq:DI1}
\delta S[s]&:=
S[s](u+\delta u) - S[s](u).
\end{align}
be
the variation of the action integral $S[s]$ in
the time span $[s,s+1]$.
We show that it is given by the boundary values of the time span as follows.
\begin{lem}
\label{lem:DI1}
\begin{align}
\label{eq:DI2}
\delta S[s]&
=
\sum_{i=1}^n
\tilde{p}_i[s] \delta \tilde{u}_i[s]
- 
\sum_{i=1}^n
p_i [s]\delta u_i [s],
\end{align}
where $(u[s],p[s])$ and $(\tilde{u}[s],\tilde{p}[s])$
are the points of the flow at $t=s$ and $s+1$, respectively,
in the $s$-th coordinates $(u[s],p[s])$.
\end{lem}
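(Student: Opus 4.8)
The plan is to recast the variation of the singular Lagrangian as the classical computation showing that the action functional is a generating function for the Hamiltonian flow. First I would observe that along \emph{any} Hamiltonian flow of $H[s]=H^{B[s]}_{k_s,\varepsilon_s}$ — both the reference flow $u(t)$ and the varied flow $u(t)+\delta u(t)$, which by hypothesis is again such a flow — equation \eqref{eq:Hu1} gives $\dot u_i(t)=0$ for all $i\neq k_s$. Hence the Legendre formula \eqref{eq:lag4} for $\mathscr{L}[s]$ can be rewritten, when evaluated along the flow, as $\mathscr{L}[s]=\sum_{i=1}^n p_i(t)\dot u_i(t)-H(u(t),p(t))$. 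Since each flow in the time span $[s,s+1]$ is determined by its initial data $(u[s],p[s])\in\mathbb{R}^{2n}$, the quantity $S[s]$ in \eqref{eq:Ihat2} coincides, as a function of the chosen Hamiltonian flow, with the phase-space action functional $\int_s^{s+1}\bigl(\sum_i p_i(t)\dot u_i(t)-H(u(t),p(t))\bigr)\,dt$ evaluated on that flow, and I would compute $\delta S[s]$ in this form.

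Next I would perform the standard first-order variation of this functional, allowing both $u(t)$ and $p(t)$ to vary (the variation $\delta p(t)$ being whatever is induced by staying inside the trajectory family; it will not matter which). Integrating by parts the term $\int_s^{s+1} p_i\,\delta\dot u_i\,dt$ produces the boundary contribution $\bigl[\sum_i p_i\,\delta u_i\bigr]_s^{s+1}$ together with the bulk term $\int_s^{s+1}\sum_i\bigl((\dot u_i-\partial H/\partial p_i)\,\delta p_i-(\dot p_i+\partial H/\partial u_i)\,\delta u_i\bigr)\,dt$. By Proposition \ref{prop:Hs1} the reference path obeys Hamilton's equations $\dot u_i=\partial H/\partial p_i$ and $\dot p_i=-\partial H/\partial u_i$, so the bulk term vanishes identically; only the boundary contribution survives. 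Reading off the values at $t=s$ as $(u[s],p[s])$ and at $t=s+1$ as $(\tilde u[s],\tilde p[s])$ then gives precisely \eqref{eq:DI2}.

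The one point that needs care — and the place where the singularity of $\mathscr{L}[s]$ intrudes — is that the identity $\mathscr{L}[s]=\sum_i p_i\dot u_i-H[s]$ must be applied along the \emph{varied} flows and not only along the reference flow; this is legitimate precisely because every admissible variation is itself a Hamiltonian flow and therefore has $\dot u_i=0$ for $i\neq k_s$. For this reason I would not use a naive Lagrangian integration by parts, which would only yield the boundary term $\bigl[(\partial\mathscr{L}[s]/\partial\dot u_{k_s})\,\delta u_{k_s}\bigr]_s^{s+1}$, the remaining components of $\partial\mathscr{L}[s]/\partial\dot u$ being zero by Proposition \ref{prop:le1}; recovering the full sum $\sum_i p_i\,\delta u_i$ then requires either the Hamiltonian reformulation above or, equivalently, the observations that $\partial\mathscr{L}[s]/\partial u_i=\dot p_i$ for $i\neq k_s$ (combine Proposition \ref{prop:le1} with \eqref{eq:Hp1}) and that $\delta u_i(t)$ is independent of $t$ along the flow for $i\neq k_s$, which turns $\int_s^{s+1}\dot p_i(t)\,\delta u_i\,dt$ into the telescoping difference $\tilde p_i[s]\,\delta\tilde u_i[s]-p_i[s]\,\delta u_i[s]$. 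Either way the proof concludes.
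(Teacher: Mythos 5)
Your proof is correct, but your primary route differs from the paper's in a useful way. The paper works directly with the singular Lagrangian $\mathscr{L}[s](u,\dot u)$ and is forced to split the variation into the $i=k_s$ part (where the Euler--Lagrange equation is available by Proposition \ref{prop:le1}(1) and yields the boundary term $\left[p_{k_s}\delta u_{k_s}\right]_s^{s+1}$) and the $i\neq k_s$ part, which it handles by hand using the explicit formula \eqref{eq:le4} for $\partial\mathscr{L}/\partial u_i$, the constancy of $\delta u_i$ in $t$, and the time-one formula \eqref{eq:Hdp2} to produce $(\tilde p_i-p_i)\delta u_i$. You instead observe that along any admissible (i.e.\ Hamiltonian-flow) path one has $\dot u_i=0$ for $i\neq k_s$, so $S[s]$ coincides with the phase-space action $\int_s^{s+1}\bigl(\sum_i p_i\dot u_i - H\bigr)dt$, and then the standard first-order variation kills the bulk terms uniformly by Hamilton's equations (Proposition \ref{prop:Hs1}), leaving only $\left[\sum_i p_i\,\delta u_i\right]_s^{s+1}$. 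This buys a uniform treatment of all indices and makes transparent why the result is just the statement that the action generates the time-one flow, at the price of the one subtlety you correctly flag: the identification $\mathscr{L}[s]=\sum_i p_i\dot u_i-H$ must hold along the varied flows as well, which is exactly where the restriction to Hamiltonian-flow variations (and the faithfulness of the Legendre inversion \eqref{eq:pk1} along such flows) enters. Your closing ``equivalently'' remark --- using $\partial\mathscr{L}[s]/\partial u_i=\dot p_i$ for $i\neq k_s$ together with the $t$-independence of $\delta u_i$ --- is essentially a restatement of the paper's own computation of $\delta S_2$, so the two arguments agree where they overlap.
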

\begin{proof}
We fix the discrete time  $s$.
For simplicity, we suppress the index $s$
everywhere,
in particular we write $k$ instead of $k_s$.
Recall that, by Proposition \ref{prop:Hs1}, for $i\neq k$,
\begin{align}
\label{eq:ui1}
 \dot{u}_i=0.
\end{align}
Thus, we have, for $i\neq k$,
\begin{align}
\label{eq:dui1}
 \delta\dot{u}_i=0.
\end{align}
Therefore,
it is natural to separate the variation $\delta S=\delta S[s]$ into two parts:
\begin{align}
\delta S &=\delta S_1 + \delta S_2,\\
\delta S_1 &=
\int_s^{s+1}
\left(
\frac{\partial \mathscr{L}}{\partial u_k}
\delta u_k
+
\frac{\partial \mathscr{L}}{\partial \dot{u}_k}
\delta \dot{u}_k
\right)
dt,\\
\delta S_2 &=
\sum_{\scriptstyle i=1 \atop \scriptstyle i\neq k}^n
\int_s^{s+1}
\left(
\frac{\partial \mathscr{L}}{\partial u_i}
\delta u_i
\right)
dt.
\end{align}
Recall that,
by Proposition \ref{prop:le1} (1),
the Euler-Lagrange equation for $i=k$ 
follows from  the equations of motion.
By using it
and also \eqref{eq:leq1}, we have
\begin{align}
\label{eq:dI1}
\delta S_1 &=
\int_s^{s+1}
\left(
\frac{d}{dt}
\left(
\frac{\partial \mathscr{L}}{\partial \dot{u}_k}
\right)
\delta u_k
+
\frac{\partial \mathscr{L}}{\partial \dot{u}_k}
\delta \dot{u}_k
\right)
dt
=\left[\frac{\partial \mathscr{L}}{\partial \dot u_k} \delta u_k\right]_s^{s+1}
=
\left[
p_k \delta u_k
\right]_{s}^{s+1}.
\end{align}

On the other hand, for $i\neq k$,
by \eqref{eq:le4} we have an explicit expression for ${\partial \mathscr{L}}/{\partial u_i}$ and
\eqref{eq:ui1} shows $\delta u_i$ is independent of $t$.  Combining with \eqref{eq:Hdp2} gives
\begin{align}
\begin{split}
\label{eq:dI2}
\delta S_2
&=
\sum_{\scriptstyle i=1 \atop \scriptstyle i\neq k}^n
(- 1)\frac{b_{ki}}{2d_i}\log (1+y_k^{\varepsilon})
\delta u_i
=
\sum_{\scriptstyle i=1 \atop \scriptstyle i\neq k}^n
(\tilde{p}_i-p_i)
\delta u_i.
\end{split}
\end{align}
(Note that in \eqref{eq:dI2} we did not use the Euler-Lagrange 
equations for $i\neq k$,
which are not valid here as noted after Proposition \ref{prop:le1}.)
Recall from \eqref{eq:rho as flow} that $(\tilde u[s], \tilde p[s])$ is obtained from $(u[s],p[s])$ by the time one flow of the Hamiltonian $H[s]$.  Thus gathering \eqref{eq:dI1} with \eqref{eq:dI2} and noting that $\delta u_i=\delta\tilde u_i$ for $i\ne k$,
we obtain \eqref{eq:DI2}.
\end{proof}

Let $\sigma$ be the one
in \eqref{eq:sigma4},
and let us introduce
\begin{align}
(u[T+1],p[T+1])
:=
\sigma (u[T],p[T]).
\end{align}

\begin{lem}
We have the equalities
\label{lem:DI2}
\begin{align}
\label{eq:pus1}
\sum_{i=1}^n
{p}_i[s+1] \delta {u}_i[s+1]
&=
\sum_{i=1}^n
\tilde{p}_i[s] \delta \tilde{u}_i[s],
\quad
(s=0,\dots,T-1),
\\
\label{eq:pus2}
\sum_{i=1}^n
{p}_i[T+1] \delta {u}_i[T+1]
&=
\sum_{i=1}^n
{p}_i[T] \delta {u}_i[T].
\end{align}
\end{lem}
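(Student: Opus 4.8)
The plan is to exploit that the two kinds of chart transitions occurring between consecutive times in \eqref{eq:seq2} --- namely the tropical transformations $\tau^{B[s]}_{k_s,\varepsilon_s}$ and the final permutation $\sigma$ --- are linear maps on $\mathbb{R}^{2n}$ which preserve the canonical pairing $\sum_i p_i u_i$, and hence also preserve the bilinear pairing $\sum_i p_i\,\delta u_i$ between a momentum and an infinitesimal variation of position. Since these maps have constant coefficients and no affine part, the variation operator $\delta$ (which passes from a Hamiltonian flow to a nearby Hamiltonian flow) commutes with them, so the pairing is carried across each transition unchanged. This is exactly what the two asserted equalities say.

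For \eqref{eq:pus1} I would argue as follows. By the decomposition \eqref{eq:decom3} one has $(u[s+1],p[s+1])=\tau^{B[s]}_{k_s,\varepsilon_s}(\tilde u[s],\tilde p[s])$, and in the proof of Proposition \ref{prop:tau1} this transformation was written in matrix form as $u[s+1]=M\tilde u[s]$, $p[s+1]=N\tilde p[s]$ with $N^{T}M=I$. A varied flow that is again a Hamiltonian flow represents, at time $s+1$, some point of $M$ near the original one; in the $s$-th chart that point has coordinates $(\tilde u[s]+\delta\tilde u[s],\tilde p[s]+\delta\tilde p[s])$, and in the $(s+1)$-st chart its coordinates are the $\tau$-image of the latter, so by linearity of $\tau$ we get $\delta u[s+1]=M\,\delta\tilde u[s]$ and $\delta p[s+1]=N\,\delta\tilde p[s]$. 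Hence
\begin{align}
\sum_{i=1}^n p_i[s+1]\,\delta u_i[s+1]
=(N\tilde p[s])^{T}(M\,\delta\tilde u[s])
=\tilde p[s]^{T}(N^{T}M)\,\delta\tilde u[s]
=\sum_{i=1}^n \tilde p_i[s]\,\delta\tilde u_i[s].
\end{align}

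For \eqref{eq:pus2} I would use that $(u[T+1],p[T+1])=\sigma(u[T],p[T])$, i.e.\ $u_i[T+1]=u_{\sigma^{-1}(i)}[T]$ and $p_i[T+1]=p_{\sigma^{-1}(i)}[T]$, so that $\delta u_i[T+1]=\delta u_{\sigma^{-1}(i)}[T]$ as well; reindexing the sum by $j=\sigma^{-1}(i)$ yields
\begin{align}
\sum_{i=1}^n p_i[T+1]\,\delta u_i[T+1]
=\sum_{i=1}^n p_{\sigma^{-1}(i)}[T]\,\delta u_{\sigma^{-1}(i)}[T]
=\sum_{j=1}^n p_j[T]\,\delta u_j[T].
\end{align}
I do not expect a genuine obstacle here: the whole content is that the two transition maps are pairing-preserving linear maps and that $\delta$ passes through them, the identity $N^{T}M=I$ from Proposition \ref{prop:tau1} doing all the work, with the permutation case a mere relabelling. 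The only thing to state carefully is that a varied \emph{Hamiltonian} flow still corresponds, across the chart change, to the (linear, constant-coefficient) image of the variation, which is why $\delta$ and $\tau^{B[s]}_{k_s,\varepsilon_s}$, resp.\ $\sigma$, commute.
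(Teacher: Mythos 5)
Your proof is correct and follows essentially the same route as the paper: the first equality rests on the matrix identity $N^{T}M=I$ underlying Proposition \ref{prop:tau1}, and the second is the trivial relabelling by $\sigma$. In fact you make explicit the point the paper leaves implicit, namely that linearity of $\tau^{B[s]}_{k_s,\varepsilon_s}$ lets $\delta$ commute with the chart change so that the bilinear pairing $\sum_i p_i\,\delta u_i$ (not just $\sum_i p_i u_i$) is preserved.
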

\begin{proof}
The first equality is due to Proposition \ref{prop:tau1} (1).
The second equality  is clear from the definition 
of $\sigma$.
\end{proof}

Consider the total variation
\begin{align}
\delta {S}
=\sum_{s=0}^{T-1} \delta S[s].
\end{align}
Combining  Lemmas \ref{lem:DI1} and \ref{lem:DI2},
we see that it is given by the boundary values,
\begin{align}
\delta {S} =
\sum_{i=1}^n
p_i[T+1] \delta u_i[T+1]
- 
\sum_{i=1}^n
p_i [0]\delta u_i[0].
\end{align}
On the other hand,
by the assumption
of the $\sigma$-periodicity of 
 the sequence of signed mutations
\eqref{eq:seq2},
 we have
\begin{align}
\label{eq:pus3}
{p}_i[T+1]={p}_i[0],
\quad
\delta {u}_i[T+1]=\delta {u}_i[0].
\end{align}
Therefore, we conclude that
\begin{align}
\delta {S} =0.
\end{align}
Since this holds for any infinitesimal variation of any flow $u(t)$,
$S$ is constant.

It remains to determine the constant value of $S$.
We evaluate it
in the limit $p_i[0]\rightarrow -\infty$ for all $i$.
Then, all initial $y$-variables $y_i[0]=\exp(d_i p_i[0] +w_i[0])$
go to $0$.
Accordingly, all $y_{k_s}[s]^{\varepsilon_s}$ ($s=0,\dots,T-1$) also go to $0$
due to the definition of the tropical sign $\varepsilon_s$.
So, by \eqref{eq:lag3}, the Lagrangians $\mathscr{L}[s]$
 go to $0$ as well.
Thus, we have $ S \rightarrow 0$.
Therefore, $ S = 0$.

This completes the proof of Theorem \ref{thm:const1}.

\begin{rem}
\label{rem:noether1}
The meaning of Theorem \ref{thm:const1} and its proof becomes
more transparent if we compare them with  {\em Noether's theorem}
(e.g., \cite[Theorem 1.3]{Takhtajan08}).

For simplicity, 
let us consider the variation of
a general  regular Lagrangian $\mathscr{L}$ under an infinitesimal transformation
$u_i \mapsto u_i + \epsilon a_i$ for some $i$,
where $\epsilon$ is an infinitesimal
and
$a_i$ is a function of $u$.
Then, by the Euler-Lagrange equation for $i$, we have
\begin{align}
\label{eq:del1}
\delta  \mathscr{L}
=
\frac{d}{dt}
\left(
\frac{\partial \mathscr{L}}{\partial \dot{u}_i}
\right)
\epsilon a_i
+
\frac{\partial \mathscr{L}}{\partial \dot{u}_i}
\epsilon \dot{a}_i
=
\epsilon
\frac{d}{dt}
\left(
p_i a_i
\right).
\end{align}
Thus, $\delta \mathscr{L}=0$,
i.e.,
the invariance of the Lagrangian in the order of $\epsilon$,
implies that  the generator $X=p_ia_i$ is an integral of motion;
that is Noether's theorem.
Moreover, we see in \eqref{eq:del1} that the converse is also true,
namely,
if $X=p_ia_i$ is an integral of motion,
then $\delta  \mathscr{L}=0$.

Next we consider a finite time analogue of the above.
Namely, we consider a variation of
the action integral $S$ under an infinitesimal transformation
of Hamiltonian flows
$u_i(t) \mapsto u_i(t) + \epsilon a_i(t)$ for some $i$,
where $\epsilon$ is an infinitesimal
and
$a_i(t)$ depends on $u(t)$.
Then, again by the Euler-Lagrange equation, we have
\begin{align}
\label{eq:del2}
\delta  S
=
\int_{t_0}^{t_1}
\left(
\frac{d}{dt}
\left(
\frac{\partial \mathscr{L}}{\partial \dot{u}_i}
\right)
\epsilon a_i
+
\frac{\partial \mathscr{L}}{\partial \dot{u}_i}
\epsilon \dot{a}_i
\right)
dt
=
\epsilon
\left[
p_i a_i
\right]_{t_0}^{t_1}.
\end{align}
Thus, 
$\delta S=0$,
i.e.,
the invariance of the action integral  in the order of $\epsilon$,
implies that the generator $X=p_ia_i$ is periodic at $t_0$ and $t_1$,
and {\em vice versa}.
\end{rem}

\subsection{Main results}

By combining Propositions
\ref{prop:Leg2} and \ref{prop:const1} with Theorem \ref{thm:const1},
we have the following theorem.
\begin{thm}
\label{thm:const2}
Suppose that the sequence of signed mutations
\eqref{eq:seq2} is $\sigma$-periodic.
Then, for the Hamiltonian flow of the Hamiltonian
 in \eqref{eq:Hs1}
with any initial point in the phase space $M$,
 we have
\begin{align}
\sum_{s=0}^{T-1}
\mathscr{L}[s]=0.
 \end{align}
 In particular, 
 if the initial point $(u[0],p[0])$ in \eqref{eq:seq2}
 satisfies the condition
 \begin{align}
 d_ip_i[0]
 = w_i[0],
 \quad
 (i=1,\dots,n),
 \end{align}
we have the following identity of the  Rogers dilogarithm $\tilde{L}(x)$
in \eqref{eq:L4}:
\begin{align}
\label{eq:di2}
\sum_{s=0}^{T-1}
\frac{\varepsilon_s}{d_{k_s}}
\tilde{L}(\hat{y}_{k_s}^{\varepsilon_s}[s])
= 0,
\end{align}
where
\begin{align}
\hat{y}_i[s]=e^{2w_i[s]}.
\end{align}
 \end{thm}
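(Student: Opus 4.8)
The plan is to assemble the statement from three ingredients already in hand: Proposition~\ref{prop:const1}, Theorem~\ref{thm:const1}, and Proposition~\ref{prop:Leg2}, together with the fact that the small phase space is invariant under the dynamics. First I would invoke Proposition~\ref{prop:const1} to rewrite the action integral $S$ of the singular Lagrangian along the Hamiltonian flow of \eqref{eq:Hs1} as the sum $\sum_{s=0}^{T-1}\mathscr{L}[s]$ of the values $\mathscr{L}[s]$, each of which is constant along the $s$-th leg of the flow (this constancy being exactly Proposition~\ref{prop:const1}, which in turn rests on $\ddot u_{k_s}[s]=\dot p_{k_s}[s]=0$ from Proposition~\ref{prop:Hs1}). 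Since the sequence of signed mutations \eqref{eq:seq2} is assumed $\sigma$-periodic, Theorem~\ref{thm:const1} gives $S=0$ for every Hamiltonian flow, that is, for every initial point of $M$. Combining the two yields $\sum_{s=0}^{T-1}\mathscr{L}[s]=0$, which is the first assertion of the theorem.

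For the second (``in particular'') assertion, I would first note that the hypothesis $d_ip_i[0]=w_i[0]$ says precisely that the initial point lies in the small phase space $M_0$ of \eqref{eq:sp1}. I would then argue by induction on $s$ that the entire trajectory remains in $M_0$: on each leg $[s,s+1]$ the Hamiltonian flow of \eqref{eq:Hs1} preserves $M_0$ by Proposition~\ref{prop:small2} (applied with $B=B[s]$, $k=k_s$, $\varepsilon=\varepsilon_s$), while the tropical transformation $\tau^{B[s]}_{k_s,\varepsilon_s}$ joining two consecutive legs is only a change of canonical coordinates, so it does not move the point of $M$ at all; moreover, in the new chart $M_0$ is cut out by equations of the same shape, namely \eqref{eq:sp2}, so the induction step closes. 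Hence the whole flow stays in $M_0$.

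On $M_0$ the restricted $s$-th Lagrangian is evaluated by Proposition~\ref{prop:Leg2}: $\mathscr{L}[s]=\frac{\varepsilon_s}{2d_{k_s}}\tilde{L}\!\left(y_{k_s}[s]^{\varepsilon_s}\right)$, where $d_{k_s}$ is the common skew-symmetrizer entry, which is well defined thanks to the standing assumption that a single $D$ is used for all the $B[s]$. Since $y_i=\hat y_i$ on $M_0$ by \eqref{eq:yyat1} and $\hat y_i[s]=e^{2w_i[s]}$ by \eqref{eq:yhat1}, this becomes $\mathscr{L}[s]=\frac{\varepsilon_s}{2d_{k_s}}\tilde{L}\!\left(\hat y_{k_s}[s]^{\varepsilon_s}\right)$. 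Substituting into the identity $\sum_{s=0}^{T-1}\mathscr{L}[s]=0$ already obtained, and multiplying by $2$, produces exactly \eqref{eq:di2}.

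I do not anticipate a genuine obstacle: the proof is a bookkeeping assembly of results proved earlier, and the substantive content is all in Theorem~\ref{thm:const1} (and, behind it, Propositions~\ref{prop:period2}, \ref{prop:trop1}). The two points that deserve care are (i) checking that the trajectory does not escape $M_0$ at the junctions between legs — resolved by combining Proposition~\ref{prop:small2} with the coordinate-change interpretation of $\tau^{B[s]}_{k_s,\varepsilon_s}$ and the description \eqref{eq:sp2} of $M_0$ in the transformed chart — and (ii) tracking the normalization, i.e.\ the replacement of $y_{k_s}$ by $\hat y_{k_s}$ which is legitimate only on $M_0$, and the factor $\tfrac12$ relating the form of $\mathscr{L}[s]$ in Proposition~\ref{prop:Leg2} to the coefficient $\varepsilon_s/d_{k_s}$ appearing in \eqref{eq:di2}.
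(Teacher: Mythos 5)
Your proposal is correct and follows essentially the same route as the paper, which obtains the theorem precisely by combining Proposition \ref{prop:const1}, Theorem \ref{thm:const1}, and Proposition \ref{prop:Leg2}; the extra details you supply (the trajectory remaining in $M_0$ via Proposition \ref{prop:small2} together with the description \eqref{eq:sp2} of $M_0$ in the transformed chart, the identification $y_i=\hat y_i$ on $M_0$, and the factor $\tfrac12$) are exactly the bookkeeping the paper leaves implicit.
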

 
By  combining 
Proposition
 \ref{prop:period2} and Theorem \ref{thm:const2},
 we obtain a slightly different version of
 Theorem \ref{thm:DI}.
 \begin{thm}
\label{thm:const3}
Suppose that the sequence of  mutations
\eqref{eq:seq1} is $\sigma$-periodic.
Let $\varepsilon_0,\dots,\varepsilon_{T-1}$
 be the tropical sign sequence of \eqref{eq:seq1}.
 Let $D=\mathrm{diag}(d_1,\dots,d_n)$ be any 
 skew-symmetrizer of the initial matrix $B$ in \eqref{eq:seq1}.
We set the $y$-variables in \eqref{eq:seq1} to be trivial
by the specialization $\mathbb{Q}_+(y)\rightarrow \mathbf{1}$.
Let $\mathbb{Q}_+(x)\subset \mathcal{F}$ be a semifield
generated by the initial $x$-variables in \eqref{eq:seq1}.
Then, the following identity for the  Rogers dilogarithm $\tilde{L}(x)$
in \eqref{eq:L4} holds:
\begin{align}
\label{eq:di3}
\sum_{s=0}^{T-1}
\frac{\varepsilon_s}{d_{k_s}}
\tilde{L}(\hat{y}_{k_s}^{\varepsilon_s}[s])
= 0,
\end{align}
where 
\begin{align}
\label{eq:yhat10}
\hat{y}_i[s]=\prod_{j=1}^n x_j[s]^{b_{ji}[s]}
\end{align}
 are evaluated by any
semifield homomorphism
\begin{align}
\label{eq:ev2}
\mathrm{ev}_x:
\mathbb{Q}_+(x)
\rightarrow 
\mathbb{R}_+.
\end{align}
 \end{thm}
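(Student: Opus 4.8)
The plan is to derive Theorem~\ref{thm:const3} from Theorem~\ref{thm:const2}; the substantive point is only to reconcile the two \emph{a priori} different meanings carried by the symbol $\hat y_{k_s}[s]$ in \eqref{eq:di2} and in \eqref{eq:di3}.

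First I would apply Proposition~\ref{prop:period2}: since the sequence of mutations \eqref{eq:seq1} is $\sigma$-periodic, so is the associated sequence of signed mutations \eqref{eq:seq2} built from the tropical sign sequence $\varepsilon_0,\dots,\varepsilon_{T-1}$ and any common skew-symmetrizer $D$ of the matrices $B[s]$. Hence Theorem~\ref{thm:const2} applies and tells us that \emph{for every} choice of initial point of \eqref{eq:seq2} lying on the small phase space $M_0$ (that is, with $d_ip_i[0]=w_i[0]$) one has the identity \eqref{eq:di2}, where now $\hat y_i[s]=e^{2w_i[s]}$ is the value produced by propagating the canonical variables along the signed flow.

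Next I would choose this initial point to match the prescribed evaluation. Given a semifield homomorphism $\mathrm{ev}_x\colon\mathbb{Q}_+(x)\to\mathbb{R}_+$, set $u_i[0]:=\tfrac{1}{2}\log\mathrm{ev}_x(x_i[0])$, $w_i[0]:=\sum_{j}b_{ji}[0]\,u_j[0]$, and $p_i[0]:=w_i[0]/d_i$; this point lies on $M_0$ by construction, and $e^{2u_i[0]}=\mathrm{ev}_x(x_i[0])$. I would then prove, by induction on $s$, that $e^{2u_i[s]}=\mathrm{ev}_x\bigl(x_i[s]\bigr)$, where on the left $e^{2u_i[s]}$ is carried by the signed-mutation flow \eqref{eq:seq2} and on the right $x_i[s]$ is the cluster-algebra $x$-variable of \eqref{eq:seq1} specialized to trivial coefficients (an element of $\mathbb{Q}_+(x)$, since the mutation rule \eqref{eq:xmut2} is subtraction-free). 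The inductive step is direct: the small phase space $M_0$ is preserved by each signed mutation, by Proposition~\ref{prop:small2} for the nontropical part and by the fact that $M_0$ is also cut out by $d_ip'_i-w'_i=0$ in the new chart for the tropical part; therefore Theorem~\ref{thm:Hs3} says the variables $e^{2u_i[s]}$ evolve exactly by \eqref{eq:xmut2}, and since \eqref{eq:xmut2} uses only products, quotients and the sum $1+\hat y_k^{\varepsilon}$ --- all operations of the semifield $\mathbb{Q}_+(x)$ --- the homomorphism $\mathrm{ev}_x$ intertwines it with the corresponding step of \eqref{eq:seq1}. (The exchange matrices agree on both sides because \eqref{eq:Bmut1} does not depend on the sign.)

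Finally I would combine the pieces. On $M_0$ we have $\hat y_i[s]=e^{2w_i[s]}=\prod_j\bigl(e^{2u_j[s]}\bigr)^{b_{ji}[s]}=\prod_j x_j[s]^{b_{ji}[s]}$ with the flow's $x$-variables, which by the previous step equals $\mathrm{ev}_x\bigl(\prod_j x_j[s]^{b_{ji}[s]}\bigr)$, i.e.\ the quantity \eqref{eq:yhat10} evaluated through $\mathrm{ev}_x$. Substituting this into \eqref{eq:di2} yields precisely \eqref{eq:di3}. I expect the only real obstacle to be the careful bookkeeping in the intertwining argument of the previous paragraph: one must verify that $M_0$ is stable under the \emph{full} signed mutation $\tau^{B}_{k,\varepsilon}\circ\rho^{B}_{k,\varepsilon}$ rather than just its Hamiltonian part, and use the subtraction-free (Laurent) nature of the $x$-variables in \eqref{eq:seq1} to justify that $\mathrm{ev}_x$ may be applied at every step.
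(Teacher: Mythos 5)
Your proposal is correct and follows essentially the same route as the paper: the paper obtains Theorem \ref{thm:const3} precisely by combining Proposition \ref{prop:period2} with Theorem \ref{thm:const2}, the only work being the identification (via Theorem \ref{thm:Hs3}, Proposition \ref{prop:small2}, and the invariance of $M_0$ in the new chart) of the flow's variables $\hat{y}_i[s]=e^{2w_i[s]}$ on $M_0$ with the evaluated cluster-algebra quantities \eqref{eq:yhat10}. Your explicit choice of initial point from $\mathrm{ev}_x$ and the inductive intertwining argument simply spell out the details the paper leaves implicit, and they are sound.
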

 
The only difference between 
Theorems \ref{thm:DI}
and  \ref{thm:const3} is the ranges of the initial $y$- and $\hat{y}$-variables therein
 under the evaluations \eqref{eq:ev1} and \eqref{eq:ev2}.
Namely,
 each of the initial $y$-variables in
  Theorem \ref{thm:DI}  independently takes {\em any\/} value in $\mathbb{R}_+$,
  since they are independent variables.
  Meanwhile, each of the  initial $\hat{y}$-variables in Theorem \ref{thm:const3},
  defined by \eqref{eq:yhat10} with $s=0$,
does so if and only if the initial matrix $B=B[0]$ is {\em invertible}.
Therefore, 
Theorem  \ref{thm:const3} is apparently weaker than
 Theorem \ref{thm:DI}.
 
 Nevertheless, we can show the following fact,
 which completes our derivation of Theorem \ref{thm:DI}.
 
 \begin{thm}
 \label{thm:equiv1}
 Theorem  \ref{thm:const3} implies 
  Theorem \ref{thm:DI}.
  Therefore,
  Theorems \ref{thm:DI}
and  \ref{thm:const3} are equivalent.
 \end{thm}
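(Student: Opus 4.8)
The plan is to deduce Theorem~\ref{thm:DI} from Theorem~\ref{thm:const3} by embedding the given data into a larger cluster algebra whose initial exchange matrix \emph{is} invertible. By definition of the evaluation \eqref{eq:ev1} it suffices to prove the identity \eqref{eq:di1} for an arbitrarily prescribed tuple $(\eta_1,\dots,\eta_n)\in\mathbb{R}_+^n$ of values of the initial $y$-variables. To this end I would introduce the $2n\times 2n$ integer matrix
\begin{align}
\hat{B}=\begin{pmatrix} B & -I_n\\ I_n & 0\end{pmatrix},
\end{align}
which is skew-symmetrizable with skew-symmetrizer $\hat{D}=\mathrm{diag}(d_1,\dots,d_n,d_1,\dots,d_n)$ and is invertible (indeed $\hat{B}(v,w)^{T}=(Bv-w,v)^{T}$ vanishes only for $v=w=0$). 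Form the cluster algebra with exchange matrix $\hat{B}$, \emph{trivial} coefficients, and initial cluster $(x_1,\dots,x_{2n})$ generating $\mathbb{Q}_+(x_1,\dots,x_{2n})\subset\mathcal{F}$, and mutate it along the same index sequence $k_0,\dots,k_{T-1}\in\{1,\dots,n\}$ as in \eqref{eq:seq1}.

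First I would check that this enlarged mutation sequence is $\hat{\sigma}$-periodic, where $\hat{\sigma}$ acts as $\sigma$ on $\{1,\dots,n\}$ and as the identity on $\{n+1,\dots,2n\}$. For the exchange matrices: since every mutation is performed at an index $\le n$, the top-left $n\times n$ block of $\hat{B}[s]$ is exactly $B[s]$, and the bottom-left block follows the $c$-vector mutation rule, hence equals the $c$-matrix $(c_{ij}[s])$ of \eqref{eq:seq1}; both are $\sigma$-periodic by hypothesis (the $c$-matrix because the $y$-variables of \eqref{eq:seq1} are $\sigma$-periodic, upon tropicalization). A short computation using Theorem~\ref{thm:sign1} shows the bottom-right block of $\hat{B}[s]$ is identically $0$, and the top-right block is then pinned down by skew-symmetrizability together with $d_{\sigma(i)}=d_i$ from Proposition~\ref{prop:d1}; hence $\hat{B}[T]$ is the $\hat{\sigma}$-permutation of $\hat{B}[0]$. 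For the cluster variables: the same sign-coherence computation shows that the $c$-vectors of the enlarged pattern are the $c$-vectors of \eqref{eq:seq1} padded with zeros for indices $\le n$, and constant ($=e_{n+m}$) for indices $n+m$; in particular they, and hence (by the $c$--$g$ duality used in the proof of Proposition~\ref{prop:trop1}) the $g$-vectors, are $\hat{\sigma}$-periodic. Re-running the argument of the ``if''-part of Proposition~\ref{prop:trop1} — that $\hat{\sigma}$-periodicity of the $c$-vectors forces, via \cite{Cao17}, that of the $F$-polynomials and hence of the $x$-variables — in this $2n$-vertex setting then yields $\hat{\sigma}$-periodicity of the enlarged $x$-variables. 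One also reads off from the $c$-vector computation that the tropical sign sequence of the enlarged pattern is again $(\varepsilon_0,\dots,\varepsilon_{T-1})$.

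Next I would apply Theorem~\ref{thm:const3} to this $\hat{\sigma}$-periodic enlarged sequence, with skew-symmetrizer $\hat{D}$ (so the relevant entries are $\hat{d}_{k_s}=d_{k_s}$), obtaining
\begin{align}
\sum_{s=0}^{T-1}\frac{\varepsilon_s}{d_{k_s}}\,\tilde{L}\big(\hat{y}_{k_s}^{\,\varepsilon_s}[s]\big)=0,
\qquad
\hat{y}_i[s]=\prod_{j=1}^{2n}x_j[s]^{\hat{b}_{ji}[s]},
\end{align}
for every semifield homomorphism $\mathrm{ev}_x\colon\mathbb{Q}_+(x_1,\dots,x_{2n})\to\mathbb{R}_+$. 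It remains to match this with \eqref{eq:di1}. For $i\le n$ the variable $\hat{y}_i[s]$ obeys the mutation rule \eqref{eq:yhmut1} with $\hat{b}_{k_s i}[s]=b_{k_s i}[s]$, i.e.\ the same rule (in $\mathbb{R}_+$, where $\oplus=+$) as the $y_i[s]$ of \eqref{eq:seq1}, and $\hat{y}_i[0]=\big(\prod_{j=1}^n x_j^{b_{ji}}\big)x_{n+i}$. Given the prescribed $(\eta_1,\dots,\eta_n)$, choose $\mathrm{ev}_x$ so that $\mathrm{ev}_x(x_{n+i})=\eta_i\big/\prod_{j=1}^n \mathrm{ev}_x(x_j)^{b_{ji}}$; then $\mathrm{ev}_x(\hat{y}_i[0])=\eta_i$ for all $i\le n$, whence by induction $\mathrm{ev}_x(\hat{y}_i[s])=\mathrm{ev}_y(y_i[s])$ for all $s$ and all $i\le n$, where $\mathrm{ev}_y$ is the homomorphism $y_i\mapsto\eta_i$. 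Substituting, the displayed identity becomes exactly \eqref{eq:di1}, proving Theorem~\ref{thm:DI}; the reverse implication is immediate (set all frozen-like ambiguities aside: \eqref{eq:di1} specializes to \eqref{eq:di3}), so the two theorems are equivalent.

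I expect the main obstacle to be the second step: transferring $\hat{\sigma}$-periodicity to the enlarged $2n$-vertex pattern. The exchange-matrix part is elementary once Proposition~\ref{prop:d1} and sign-coherence are invoked, but the periodicity of the enlarged $x$-cluster genuinely requires re-applying \cite{Cao17} (through the proof scheme of Proposition~\ref{prop:trop1}) in the larger pattern, and one must carefully verify the bookkeeping identities $\hat{b}_{k_s i}[s]=b_{k_s i}[s]$ for $i\le n$ and the coincidence of the tropical sign sequences, which are what make the enlarged $\hat{y}$-dynamics restrict, on the first $n$ coordinates, to the $y$-dynamics of \eqref{eq:seq1}.
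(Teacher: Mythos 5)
Your proposal is correct and follows essentially the same route as the paper: extend $B$ to an invertible skew-symmetrizable matrix, transfer the $\sigma$-periodicity to the extended mutation sequence, apply Theorem \ref{thm:const3} there, and recover \eqref{eq:di1} because the extended $\hat{y}$-dynamics restricted to the original indices reproduces the $y$-dynamics while the initial $\hat{y}$-values can now be prescribed arbitrarily in $\mathbb{R}_+$ via the extra cluster variables. The only difference is that you re-derive the periodicity transfer by hand for the specific extension $\left(\begin{smallmatrix} B & -I\\ I & 0\end{smallmatrix}\right)$, tracking the block structure of $\hat{B}[s]$ and re-running the sign-coherence/$c$--$g$ duality/Cao--Li argument from Proposition \ref{prop:trop1}, whereas the paper packages exactly this step as its Extension Theorem (Theorem \ref{thm:ext1}) and applies it to an arbitrary invertible extension.
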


To show Theorem \ref{thm:equiv1},
we use the following notion.

\begin{defn}
Let $B$ and $\tilde{B}$ be skew-symmetrizable (integer) matrices
of size $n$ and $m$ ($n<m$), respectively.
We call  $\tilde{B}$  an {\em extension of $B$} if
$B$ is a principal submatrix of $\tilde{B}$.
If  an extension of $B$ is invertible,
then we call it  an {\em invertible extension of $B$}
\end{defn}

 For any skew-symmetrizable matrix $B$,
there is an invertible extension $\tilde{B}$ of $B$.
For example, if $D$ is a skew-symmetrizer of $B$,
then we have the following invertible extension of $B$:
\begin{align}
\tilde{B}=
\left(
\begin{array}{c|c}
B & -I\\
\hline
D & 0
\\
\end{array}
\right).
\end{align}

The following general fact on cluster algebras is  key  to proving Theorem \ref{thm:equiv1}.

\begin{thm}[{Extension Theorem (cf. \cite[Theorem 4.3]{Nakanishi10c})}]
\label{thm:ext1}
Let $B$ and $\tilde{B}$  be any skew-symmetrizable matrices 
of size $n$ and $m$, respectively, such
that $\tilde{B}$ is an extension of $B$.
Assume, for simplicity, that 
$B$ is the principal submatrix of $\tilde{B}$ for the first $n$ indices
$1,\dots,n$ of $\tilde{B}$.
Then, if the sequence \eqref{eq:seq1}
with the initial matrix $B=B[0]$
is $\sigma$-periodic,
the sequence \eqref{eq:seq1}
with the initial matrix being replaced with $\tilde{B}$
is also $\sigma$-periodic.
Here, a permutation $\sigma$ of $\{1,\dots,n\}$ is
naturally identified with a permutation of $\{1,\dots,m\}$
such that $\sigma(i)=i$ for $n+1\leq i \leq m$.
\end{thm}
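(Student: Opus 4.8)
The plan is to reduce the statement to the periodicity of the $c$-vectors of the extended system, and then to exploit the fact that the mutation directions $k_0,\dots,k_{T-1}$ all lie in $\{1,\dots,n\}$, so that the first $n$ indices of the $\tilde B$-system never interact with the extra ones. Write $\hat\sigma$ for the permutation of $\{1,\dots,m\}$ with $\hat\sigma|_{\{1,\dots,n\}}=\sigma$ and $\hat\sigma(i)=i$ for $i>n$, and let $\tilde c_j[s]$ denote the $j$-th $c$-vector of the $\tilde B$-system. By Proposition~\ref{prop:trop1} applied to the size-$m$ system, together with the chain of implications used in its proof (periodicity of $c$-vectors implies periodicity of exchange matrices by \cite{Nakanishi11a}, implies periodicity of $F$-polynomials by \cite{Cao17}, and, through tropical duality \cite{Nakanishi11a} and Proposition~\ref{prop:d1}, implies periodicity of $g$-vectors, hence of $x$- and $y$-variables by \cite{Fomin07}), it suffices to prove that $\tilde c_j[T]=e_{\hat\sigma(j)}$ for $j=1,\dots,m$, where $e_l$ is the $l$-th standard unit vector. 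Throughout, one should write the $c$-vector recursion with the \emph{tropical} sign, so that sign-coherence (Theorem~\ref{thm:sign1}) leaves only one of the two terms: $\tilde c_i[s+1]=\tilde c_i[s]+[\varepsilon\,\tilde b_{ki}[s]]_+\tilde c_k[s]$ for $i\ne k$ and $\tilde c_k[s+1]=-\tilde c_k[s]$, and similarly for the base system.

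First I would handle the first $n$ indices by a joint induction on $s$. Since $k_s\le n$, every entry on the right-hand side of \eqref{eq:Bmut1} used to pass from $\tilde B[s]$ to $\tilde B[s+1]$ has both indices in $\{1,\dots,n\}$, so the principal $n\times n$ submatrix of $\tilde B[s]$ equals $B[s]$; it follows at once that $\tilde c_j[s]=(c_j[s],0)$ for all $j\le n$ (first $n$ coordinates equal to the base $c$-vector $c_j[s]$, last $m-n$ coordinates zero), and that the tropical sign sequence of the $\tilde B$-system is exactly $\varepsilon_0,\dots,\varepsilon_{T-1}$. Hence $\tilde c_j[T]=(c_j[T],0)=(e_{\sigma(j)},0)=e_{\hat\sigma(j)}$ for $j\le n$, by the assumed $\sigma$-periodicity of the base system. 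For $j>n$, the index $j$ is never a mutation direction, so the $g$-vector recursion gives $\tilde g_j[s]\equiv e_j$, and dually $\tilde c_j[s]$ keeps its last $m-n$ coordinates equal to those of $e_j$ while only its first block $\gamma_j[s]$ moves, with $\gamma_j[0]=0$; sign-coherence forces $\gamma_j[T]\ge 0$ componentwise.

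What remains --- and this is the step I expect to be the main obstacle --- is to prove $\gamma_j[T]=0$ for $j>n$. For the standard invertible extension $\tilde B=\left(\begin{smallmatrix}B&-I\\ D&0\end{smallmatrix}\right)$, which is the only case needed for Theorem~\ref{thm:equiv1}, this is short: the lower $n\times n$ block of $\tilde B[s]$ evolves as $D\,C[s]$, its columns remaining sign-coherent so that again only one term of \eqref{eq:Bmut1} survives, whence skew-symmetrizability forces the entries $\tilde b_{k_s,\,l}[s]$ with $l>n$ to be negative multiples of the corresponding coordinates of $d_{k_s}D\,c_{k_s}[s]$; since $\varepsilon_s$ times each coordinate of $c_{k_s}[s]$ is nonnegative by the definition of the tropical sign, every increment $[\varepsilon_s\tilde b_{k_s,\,l}[s]]_+\,c_{k_s}[s]$ vanishes and $\gamma_j[s]\equiv 0$. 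For an arbitrary extension the lower block evolves less transparently and one must invoke \cite[Theorem~4.3]{Nakanishi10c}; the additional input there is the $F$-polynomial periodicity of the extended system, which is inherited from the base: the same inductions show $\tilde F_j[s]=F_j[s]$ for $j\le n$ and $\tilde F_j[s]\equiv 1$ for $j>n$ (the $F$-polynomial recursion in a direction $\le n$ involves only submatrix entries, the first $n$ coordinates of a base $c$-vector, and the $F$-polynomials of the extra directions, which are trivial), so \cite{Cao17} applied to the base system yields $F_j[T]=1$ for all $j$ and hence $\tilde F_j[T]=1$ for all $j$, after which the separation formula \cite{Fomin07} together with tropical duality pins down the remaining $c$-vectors. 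The two inductions in the first step and the quoted periodicity machinery are routine bookkeeping; the only genuinely new work is the control of the $c$-vectors indexed by $n+1,\dots,m$ for a general extension.
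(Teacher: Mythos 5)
Your overall route is the same as the paper's: first establish the $\sigma$-periodicity of the $c$-vectors of the $\tilde B$-pattern, then upgrade it to $\sigma$-periodicity of seeds via the machinery of the if-part of Proposition \ref{prop:trop1} (periodicity of exchange matrices, of $F$-polynomials by \cite{Cao17}, then of $x$- and $y$-variables). Your first induction (preservation of the principal $n\times n$ submatrix, $\tilde c_j[s]=(c_j[s],0)$ for $j\le n$, and equality of the tropical sign sequences) is correct for an arbitrary extension, and your treatment of the indices $j>n$ for the standard extension $\left(\begin{smallmatrix} B & -I\\ D & 0\end{smallmatrix}\right)$ is complete and correct: the lower-left block equals $DC[s]$, the skew-symmetrizer $\mathrm{diag}(d_1,\dots,d_n,1,\dots,1)$ forces $\varepsilon_s\tilde b_{k_s l}[s]\le 0$ for $l>n$, so the extra $c$-vectors never move. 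Since Theorem \ref{thm:equiv1} only needs one invertible extension, this special case would indeed suffice for the paper's application.

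As a proof of Theorem \ref{thm:ext1} as stated, however, there is a genuine gap at exactly the step you flag as the main obstacle. For a general extension the extra $c$-vectors do move along the sequence (extend the $A_2$ matrix by a third index attached to index $1$ and run the pentagon sequence: already after the first mutation $\tilde c_3$ becomes $e_3+e_1$), so the content of the theorem is that they return at time $T$, and neither of your two suggestions establishes this. Citing \cite[Theorem 4.3]{Nakanishi10c} is not available in this generality: as the Remark following the theorem notes, that result is proved only for $B$ skew-symmetric and $\sigma=\mathrm{id}$, and the paper's own (sketched) proof consists precisely in redoing that argument with the now-available sign-coherence (Theorem \ref{thm:sign1}), the $c$--$g$ duality of \cite{Nakanishi11a}, and Proposition \ref{prop:d1}. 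Your alternative sketch also does not close the gap: knowing that all extended $F$-polynomials are trivial at time $T$ and that the first $n$ columns of $\tilde C[T]$ are the $e_{\sigma(j)}$ only says that $\tilde C[T]$ has the block form $\left(\begin{smallmatrix} P_\sigma & \Gamma\\ 0 & I\end{smallmatrix}\right)$ with $\Gamma\ge 0$; this is compatible with invertibility and sign-coherence for every such $\Gamma$, and tropical duality merely trades $\Gamma$ for the equally unknown extra block of the $G$-matrix (equivalently, of the $C$-matrix of the transposed pattern), so ``the separation formula together with tropical duality'' does not pin $\Gamma$ down. To prove the theorem in the stated generality you must actually carry out the analogue of the argument of \cite{Nakanishi10c} for the columns $n+1,\dots,m$; that is the one piece of work your proposal leaves undone.
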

 
 \begin{rem}
The above theorem is shown in  
\cite[Theorem 4.3]{Nakanishi10c} when $B$ is
skew-symmetric and $\sigma=\mathrm{id}$.
 \end{rem}
 
 \begin{proof}
Since the proof is parallel to the one in \cite[Theorem 4.3]{Nakanishi10c}),
 we only give a sketch of a proof.
We first show  the $\sigma$-periodicity of $c$-vectors.
Let  $c_i[s]=(c_{ji}[s])_{j=1}^m$ 
be the $c$-vectors for the 
$s$th seed  in the sequence
\eqref{eq:seq1} 
with the  initial  matrix $\tilde{B}$.
Then,
repeating the argument in the proof
of \cite[Theorem 4.3]{Nakanishi10c}),
 one can show the  $\sigma$-periodicity,
 \begin{align}
{c}_{ij}[T]={c}_{i\sigma(j)}[0]=\delta_{i\sigma(j)},
\end{align}
where we  use
the sign-coherence property
in Theorem \ref{thm:sign1},
the duality of $c$- and $g$-vectors in
\cite[Eq.(3.11)]{Nakanishi11a},
and
Proposition \ref{prop:d1}.
Then, by the proof of the if-part of the proof of Proposition \ref{prop:trop1},
 the $\sigma$-periodicity
 of seeds 
 is recovered from the $\sigma$-periodicity of
 $c$-vectors.
 \end{proof}

\begin{proof}[Proof of Theorem  \ref{thm:equiv1}]
As already mentioned, when the initial matrix $B=B[0]$ is invertible,
  Theorem  \ref{thm:const3} implies 
  Theorem \ref{thm:DI}.
  Suppose that $B$ is not invertible.
  Then, replace the initial matrix $B$ with
 any invertible extension $\tilde{B}$ of $B$.
  Thanks to Theorem \ref{thm:ext1},
  the sequence 
  \eqref{eq:seq1}
  enjoys the same $\sigma$-periodicity.  
A crucial observation is that
 the functional identity
\eqref{eq:di3} remains the same
 even if the initial matrix $B$
is replaced with $\tilde{B}$.
This is because,
 for $B$ and its extension $\tilde{B}$,
 the matrix mutation
in \eqref{eq:Bmut1},
and  also
the exchange relation of $y$-variables
in \eqref{eq:ymut1},
exactly coincide if all indices therein are restricted to the ones for $B$.
On the other hand, since $\tilde{B}$ is invertible,
each of the initial $y$-variables 
in the sequence
 \eqref{eq:di3} now independently takes any value in $\mathbb{R}_+$
 under the specialization \eqref{eq:ev2}.
 This is the desired result.
 \end{proof}
%  \begin{rem} 
%  Theorem  \ref{thm:const3} is apparently weaker than
%  Theorem \ref{thm:DI}
%  because
%   the semifield homomorphism $\mathbb{Q}_+(y) \rightarrow \mathbb{Q}_+(x)$,
% defined by $y_i\mapsto \hat{y}_i:=\prod_{j=1}^n x_j^{b_{ji}}$ is not an isomorphism,
% in general.
% However, 
% one can recover Theorem \ref{thm:DI} from
%  Theorem  \ref{thm:const3} in the following cases:
%  \par
% (1).  Suppose that the initial  matrix $B$ in \eqref{eq:seq1} is {\em invertible}.
% Then,  the above semifield homomorphism is an isomorphism.
% Thus,
%  Theorem \ref{thm:DI} for such $B$ follows
%  from   Theorem  \ref{thm:const3}.
%  \par
%  (2). Suppose that the initial matrix $B$ in \eqref{eq:seq1} 
%  is {\em skew-symmetric}.
%Then, the matrix $B$ can be extended
%   to
%  an {\em invertible} skew-symmetric matrix $\tilde{B}$.
%It is known that, if  the sequence \eqref{eq:seq1} is $\sigma$-periodic,
%   the sequence \eqref{eq:seq1} with $B$ being replaced by  $\tilde{B}$
%  is still $\sigma$-periodic \cite[Theorem 4.3]{Nakanishi10c}.
%By  (1),
%  we obtain
%    Theorem \ref{thm:DI} for $\tilde{B}$ from
%    Theorem \ref{thm:const3}.
%  Since the identities \eqref{eq:di1} 
%  for $B$ and $\tilde{B}$ are the same,
%  Theorem \ref{thm:DI} for $B$ also follows.
% \par 
%Moreover, we conjecture that Theorem 4.3 in \cite{Nakanishi10c} is
% extended to any {\em skew-symmetrizable}  matrix $B$
% and its  skew-symmetrizable extension $\tilde{B}$.
%Under the conjecture,
%one can fully recover
%   Theorem \ref{thm:DI} from
% Theorem \ref{thm:const3}.
%  \end{rem}

\bibliography{../../biblist/biblist.bib}
\end{document}